\documentclass{alggeom}

\numberwithin{equation}{section}

\newcommand{\C}{\mathbb C}
\newcommand{\Q}{\mathbb Q}
\newcommand{\R}{\mathbb R}
\newcommand{\Z}{\mathbb Z}
\newcommand{\OO}{\mathcal O}
\newcommand{\A}{\mathcal A}

\newcommand{\Qq}{\mathcal Q}
\newcommand{\Tt}{\mathcal T}

\newcommand{\Coh}{\operatorname{Coh}}
\newcommand{\Quot}{\operatorname{Quot}}
\newcommand{\Ext}{\operatorname{Ext}}
\newcommand{\Hom}{\operatorname{Hom}}
\newcommand{\RHom}{R\mathcal H\!om}

\newcommand{\Pic}{\operatorname{Pic}}
\newcommand{\rk}{\operatorname{rk}}
\newcommand{\im}{\operatorname{im}}
\newcommand{\coker}{\operatorname{coker}}

\newcommand{\length}{\operatorname{length}}
\newcommand{\Sym}{\operatorname{Sym}}

\newcommand{\ch}{\operatorname{ch}}
\newcommand{\td}{\operatorname{td}}
\newcommand{\vir}{\operatorname{vir}}

\newcommand{\AJ}{\operatorname{AJ}}
\newcommand{\Gm}{\mathbf G_m}

\newcommand{\GL}{\operatorname{GL}}

\newcommand{\pr}{\operatorname{pr}}
\newcommand{\id}{\operatorname{id}}
\newcommand{\Fitt}{\operatorname{Fitt}}
\newcommand{\gr}{\operatorname{gr}}

\newcommand{\mprime}{\mu'}

\title{Rationality of cohomological descendent series for Quot schemes on surfaces with \texorpdfstring{$p_g=0$}{pg=0}}
\author{Reginald Anderson}

\begin{document}
\subjclass{14N35 (primary), 14D20, 14C05, 14F08 (secondary)}
\keywords{Quot schemes, descendent series, wall-crossing, surfaces, virtual classes}
\maketitle

\begin{abstract}
For a smooth projective surface $S$, Johnson--Oprea--Pandharipande defined cohomological descendent generating series for Quot schemes of rank-$0$ quotients of $\OO_S^{\oplus N}$. We prove rationality of these series in the remaining cohomological surface case
\[
p_g(S)=0,\qquad \beta\neq 0,\qquad N>1.
\]
The wall-crossing part of the proof starts from Joyce-style generalized Donaldson--Thomas invariant classes of $H$-Gieseker semistable one-dimensional sheaves. We vary a single real parameter in the fixed-source Pairs stability condition and obtain the large-$c$ stable-pair chamber for maps $\OO_S^{\oplus N}\to F$. We then compare this pair chamber with the open pure Quot locus, meaning the locus inside the Quot scheme whose target quotient is pure one-dimensional, and then with the full Quot scheme, where zero-dimensional torsion in the target is allowed. The first comparison records the zero-dimensional cokernel of the image of a pair. After decomposing the pure Quot locus into locally-closed pieces on which the scheme-theoretic support curve is flat over the base, this comparison is identified with relative Quot theory on those support curves. The resulting curve-Quot contributions factor into smooth-normalization contributions and finitely many punctual factors at singular points of the support curve. The second comparison records the maximal zero-dimensional torsion subsheaf of a Quot target; locally it becomes a punctual Quot problem over the completed smooth surface ring $\C[[x,y]]$, and its contribution is the universal punctual smooth-surface factor.
\end{abstract}

\tableofcontents 

\section{Introduction}\label{sec:introduction}

Let $S$ be a smooth projective connected surface over $\C$. For $N\ge 1$, an effective class $\beta\in H_2(S,\Z)$, and $n\in\Z$, write
\[
\Quot_{S,N}(\beta,n):=
\Quot_S(\OO_S^{\oplus N},\beta,n)
\]
for the moduli space of quotients
\[
0\longrightarrow K\longrightarrow \OO_S^{\oplus N}\longrightarrow Q\longrightarrow 0
\]
with $Q$ of rank $0$, $c_1(Q)=\beta$, and $\chi(Q)=n$. Let $\Qq$ be the universal quotient on $\Quot_{S,N}(\beta,n)\times S$, with $\pi_1,\pi_2$ the projections to $\Quot_{S,N}(\beta,n)$ and $S$, respectively. For $\alpha\in K^0(S)$ set
\[
\alpha^{[n]}:=R\pi_{1*}(\Qq\otimes \pi_2^*\alpha)\in K^0\bigl(\Quot_{S,N}(\beta,n)\bigr).
\]

\begin{definition}[Descendent insertion data]\label{def:descendent-insertion-data}
Fix an integer $\ell\in\Z_{\geq 0}$, classes $\alpha_1,\dots,\alpha_\ell\in K^0(S)$, and integers $k_1,\dots,k_\ell\in\Z_{\geq 0}$. Set
\[
\tau=(\alpha_1,\dots,\alpha_\ell\mid k_1,\dots,k_\ell).
\]
We consider the cohomological descendent series
\[
Z^{\mathrm{Quot}}_{S,N,\beta,\tau}(q)
:=\sum_{n\in\Z} q^n \int_{[\Quot_{S,N}(\beta,n)]^{\vir}}
\prod_{i=1}^{\ell}\ch_{k_i}(\alpha_i^{[n]})\,c(T^{\vir}).
\]
\end{definition}

The rationality problem for such series was formulated in \cite{JOP}. Results of Johnson--Oprea--Pandharipande cover the cases $\beta=0$ and $N=1$, while Arbesfeld--Johnson--Lim--Oprea--Pandharipande together with Lim settle the $p_g(S)>0$ case in cohomology and in virtual $K$-theory; thus the remaining cohomological surface case is $p_g(S)=0$, $\beta\neq 0$, $N>1$ \cite{JOP,AJLOP,Lim}. The purpose of this paper is to prove rationality in exactly that case. We fix, once and for all, an ample divisor $H$ on $S$. The divisor $H$ is used to define reduced Hilbert polynomials and Gieseker semistability. For one-dimensional sheaves, the corresponding slope $\mu_H(\beta,n)=n/(H\cdot\beta)$ is defined in Section~\ref{sec:pairs-endpoint}, equation~\eqref{eq:slope-muH}. The arithmetic progressions modulo $H\cdot\beta$ used in the periodicity argument appear in Section~\ref{sec:pair-chamber-rationality}.

\begin{theorem}[Main theorem]\label{thm:main}
Let $S$ be a smooth projective surface with
\[
p_g(S)=0,\qquad \beta\neq 0,\qquad N>1.
\]
For every finite choice of descendent insertions $\tau$, the generating series $Z^{\mathrm{Quot}}_{S,N,\beta,\tau}(q)$ is the Laurent expansion of a rational function of $q$.
\end{theorem}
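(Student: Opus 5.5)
We will follow the route laid out in the abstract and reduce rationality of $Z^{\mathrm{Quot}}_{S,N,\beta,\tau}(q)$ to rationality of a stable-pair series, multiplied by finitely many universal local factors.

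\emph{Step 1: the pair chamber.} We start from Joyce-style generalized DT classes of $H$-Gieseker semistable one-dimensional sheaves of class $(\beta,n)$. We vary the real parameter $c$ in the fixed-source Pairs stability condition for maps $\OO_S^{\oplus N}\to F$, and cross the finitely many walls (for fixed $\beta$) up to the large-$c$ chamber. Joyce's wall-crossing formula then writes the large-$c$ pair invariants with descendent insertions as finite sums of products of semistable sheaf classes. Each product carries a factor that is polynomial in $n$, coming from $\chi(\OO_S^{\oplus N},F)=N\chi(F)$ and the descendent Chern characters.

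The semistable classes are periodic in $n$ modulo $H\cdot\beta$, because twisting by $\OO_S(H)$ shifts $\chi$ by $H\cdot\beta$ and preserves semistability. Combined with polynomial dependence on $n$, this means the series along each arithmetic progression is a finite sum of terms $P(n)q^n$ over residue classes. Such a series is rational. This is also where we use $p_g=0$: the classes do not vanish, and we expect the needed $\Ext^2$ obstruction analysis to behave well only in this case.

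\emph{Step 2: from pairs to the pure Quot locus.} A large-$c$ stable pair $\OO_S^{\oplus N}\to F$ has image a pure quotient $Q\subset F$ with zero-dimensional cokernel $F/Q$. We stratify the pure Quot locus into locally closed pieces on which the scheme-theoretic support curve $C$ is flat over the base. On each stratum, the fibres of the comparison map are relative Quot schemes of zero-dimensional quotients of $\mathcal{E}xt^1(Q,\omega)$ on $C$.

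Virtual localization/degeneracy arguments then express the pair series as the pure-Quot series times a curve-Quot generating function. That function factors into a contribution from the smooth normalization, which is rational by the known curve Quot computations, and finitely many punctual factors at the singular points of $C$. The punctual factors are rational because punctual Quot schemes of curve singularities have rational motivic or Euler series. Inverting this relation gives rationality of the pure-Quot series.

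\emph{Step 3: from pure Quot to the full Quot scheme.} Every quotient $Q$ has a maximal zero-dimensional torsion subsheaf $T$ with $Q/T$ pure. Locally, the choice of $T$ is a punctual Quot problem over $\C[[x,y]]$, and its contribution is a universal smooth-surface punctual factor. For $\beta=0$, JOP have shown this factor is rational. Multiplying then shows the full series is rational.

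\emph{Main obstacle.} The hardest part will be making Steps 2 and 3 compatible with the virtual classes and with the insertion $c(T^{\vir})$. That is, we must show the stratifications yield multiplicative factorizations at the level of virtual integrals and not just Euler characteristics. We would first try to compare the obstruction theories on each stratum via the triangle $K\to\OO^N\to Q$ and the torsion filtration. We would then use that the descendents are additive in $\ch$ along the filtration, so that the integrands factor.
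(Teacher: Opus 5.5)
Your outline follows the paper's route step for step, and your Step~1 is essentially its periodicity-plus-polynomiality argument. But at the places where a real input is needed, you either supply none or supply one that does not suffice.

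In Step~3, observing that the choice of $T_0(Q)$ is locally a punctual Quot problem over $\C[[x,y]]$ controls the moduli. Indeed, the kernel of $\OO_S^{\oplus N}\twoheadrightarrow I$ is locally free, so its completion at $p$ is free of rank $N$. It does not control the integrand, however. The restricted virtual tangent class contains a cross-term of the form $Rp_{1*}(\mathcal T^{\vee}\otimes p_2^*G)$ coupling the torsion to the pure part. This is exactly the ``main obstacle'' you leave open, and without it neither universality of the local factor nor ``multiplying'' is justified. The idea you are missing is Lemma~\ref{lem:square}. A pure one-dimensional module $G$ over a two-dimensional regular local ring $R$ has depth one, so by Auslander--Buchsbaum it has a square resolution $0\to R^{\oplus a}\to R^{\oplus a}\to G\to 0$. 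Hence $[G]=0$ in $K_0(R)$ and the cross-term vanishes in $K$-theory. Only then is the local factor the free rank-$N$ punctual series; the paper takes the rationality of that series from Bojko's theorem rather than from JOP.

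In Step~2, the relation ``pair series $=$ pure-Quot series $\times$ a curve-Quot generating function'' is not available. The fibre over $[\OO_S^{\oplus N}\twoheadrightarrow I]$ is $\Quot(I^D,m)$ on the support curve of $I$. Its inserted series depends on the point: normalization genus, singularities, branch ranks, and reducedness all vary. So there is no single factor to divide out. The paper instead keeps the triangular identity $P^{\mathrm{pair}}_{\beta,n}(\tau)=\sum_{m\ge0}I_{n-m,m}(\tau)$ and inverts it. Here each $I_{k,m}$ is a finite sum over support-flat strata of $B_k$, and the $m=0$ term is the pure-Quot coefficient.

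Moreover, rationality of motivic or Euler-characteristic series of punctual Quot schemes at curve singularities says nothing about integrals carrying descendent insertions and $c(T^{\vir})$. The paper handles this with the conductor decomposition of Proposition~\ref{prop:padding}. That decomposition writes each local series as polynomials in the operators $z_i\partial_{z_i}$ applied to products of smooth-branch series. Those series are proved rational via the Smith-normal-form stratification with polynomial weights (Theorem~\ref{thm:smoothlocal}).

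Finally, scheme-theoretic supports can be nonreduced, for example multiple curves. There, ``smooth normalization and singular points of $C$'' does not apply. The paper treats this case separately by filtering by powers of the nilradical (Proposition~\ref{prop:nonreduced-reduction}). Your remark that $p_g=0$ is ``used'' in Step~1 is not an argument, and the paper's proof never invokes that hypothesis.
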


The proof has five conceptual steps. We first define the fixed-source Pairs category and the large-\(c\) chamber used in the wall-crossing setup.

\begin{definition}[Fixed-source Pairs category]\label{def:fixed-source-pairs-category}
Let \(H\) be the ample divisor fixed above, and set
\[
E_N:=\OO_S^{\oplus N}.
\]
Adapting Joyce's construction of the Pairs category for a fixed line-bundle source
\cite[\S2.6, \S\S5.8--5.10]{JoyceSurf}, we use the fixed-source Pairs category
\[
\mathcal B_{E_N}.
\]
An object of \(\mathcal B_{E_N}\) is a triple
\[
(V,F,\rho),\qquad
\rho:V\otimes_{\C}E_N\longrightarrow F,
\]
where \(V\) is a finite-dimensional complex vector space and
\(F\in\Coh(S)\) has dimension at most one. A morphism
\[
(V,F,\rho)\longrightarrow (V',F',\rho')
\]
is a pair \((a,b)\), with \(a:V\to V'\) linear and \(b:F\to F'\) a morphism
of coherent sheaves, satisfying
\[
b\circ\rho=\rho'\circ(a\otimes\id_{E_N}).
\]

The numerical type of \((V,F,\rho)\) is
\[
\nu(V,F,\rho)=(d,\beta,n),\qquad
d:=\dim_{\C}V,\qquad \beta:=c_1(F),\qquad n:=\chi(F).
\]
Thus \(\nu(V,F,\rho)\in \Z_{\geq 0}\oplus H^2(S,\Z)\oplus \Z\). When we regard
\(\beta\) as a curve class, we use Poincare duality without changing notation.
The wall-crossing argument used in the proof of Theorem~\ref{thm:main} in
Section~\ref{sec:proof-main} uses only numerical types with \(d=0\) and
\(d=1\). The case \(d=0\) is the ordinary sheaf case. The case \(d=1\), after
choosing \(V\cong\C\), is the case of maps
\[
E_N=\OO_S^{\oplus N}\longrightarrow F.
\]
\end{definition}

\begin{definition}[Large-\(c\) pair chamber]\label{def:large-c-pair-chamber}
For \(H\cdot\gamma>0\), write
\begin{equation}\label{eq:intro-slope-muH}
\mu_H(\gamma,m):=\frac{m}{H\cdot\gamma}.
\end{equation}
Here \((0,\gamma,m)\) denotes the numerical type of an ordinary one-dimensional
sheaf in \(\mathcal B_{E_N}\): the first entry is \(d=\dim V=0\),
\(\gamma=c_1(F)\), and \(m=\chi(F)\).
For a fixed real number \(c\), define the weak stability function on the numerical
types from Definition~\ref{def:fixed-source-pairs-category} by
\begin{equation}\label{eq:large-c-stability}
\mu'_{H,c}(0,\gamma,m)=
\begin{cases}
\dfrac{m}{H\cdot\gamma},& H\cdot\gamma>0,\\[1.2ex]
\infty,& \gamma=0,
\end{cases}
\qquad
\mu'_{H,c}(1,\beta,n)=c.
\end{equation}
Thus \(c\) is fixed when semistability is considered.

Fix a numerical type \((1,\beta,n)\) with \(H\cdot\beta>0\). Let
\[
\mathcal W_{\beta,n}\subset\R
\]
be the finite set of wall values for \(\mu'_{H,c}\)-semistability in this
numerical type. Equivalently, \(\mathcal W_{\beta,n}\) is the finite set of slopes
\begin{equation}\label{eq:wall-values}
\frac{m}{H\cdot\gamma}
\end{equation}
which occur as slopes of \(d=0\) one-dimensional sheaf factors in a
Harder--Narasimhan type appearing in the wall-crossing formula
\eqref{eq:wall-crossing-recursion} for numerical type \((1,\beta,n)\).
The finiteness of this set is proven in Lemma~\ref{lem:finitewalls}. A chamber is
a connected component of
\[
\R\setminus\mathcal W_{\beta,n}.
\]
The \emph{large-\(c\) pair chamber} is the chamber containing all real numbers
\(c\) satisfying
\[
c>w\qquad\text{for every }w\in\mathcal W_{\beta,n}.
\]

We denote its moduli space by
\[
P^{\mathrm{pair}}_{S,N}(\beta,n).
\]
It parametrizes maps
\[
\rho:E_N=\OO_S^{\oplus N}\longrightarrow F
\]
such that
\[
F\text{ is pure one-dimensional},\qquad
c_1(F)=\beta,\qquad
\chi(F)=n,\qquad
\coker\rho\text{ is zero-dimensional}.
\]
The scheme structure on \(P^{\mathrm{pair}}_{S,N}(\beta,n)\) is the
open-and-closed locus with Chern data \((\beta,n)\) in Lin's stable-pair
moduli scheme for the fixed coherent source \(E_N\), with stability parameter
\(\delta\) in the large-\(\delta\) chamber \cite[Theorem~1.1 and \S4]{Lin}.
\end{definition}

We will also use the following terminology. The pure Quot locus, defined in
Definition~\ref{def:pure-quot-locus}, is the open locus in the Quot scheme whose
target quotient is pure one-dimensional. The image-cokernel and torsion
stratifications, defined formally in Definition~\ref{def:correction-strata}, are the two
locally-closed decompositions which compare the pair chamber with the pure
Quot locus and then compare the pure Quot locus with the full Quot scheme.

\begin{enumerate}[label=\textup{(\arabic*)}]
\item For each fixed numerical type \((1,\beta,n)\), we apply the homological
wall-crossing formula in the spirit of \cite{AJpt1} to the one-parameter family of
weak stability functions
\[
c\longmapsto \mu'_{H,c}
\]
on the fixed-source Pairs category \(\mathcal B_{E_N}\) of
Definition~\ref{def:fixed-source-pairs-category}. The source sheaf
\[
E_N=\OO_S^{\oplus N}
\]
is fixed throughout the construction. The varying data are the vector space
\(V\), the sheaf \(F\), and the morphism
\[
\rho:V\otimes_{\C}E_N\to F.
\]
Crossing the finitely many walls in \(\mathcal W_{\beta,n}\) gives an identity
in Joyce's Lie algebra
\[
\check H_*(\mathfrak M^{pl}),
\]
where \(\mathfrak M^{pl}\) is the projective-linear rigidification introduced in
Section~\ref{sec:wall-crossing}. In this identity, the class
\[
\bigl[P^{\mathrm{pair}}_{S,N}(\beta,n)\bigr]^{\mathrm{inv}}
\]
is written as a finite \(\Q\)-linear combination of nested Joyce brackets in
classes of \(H\)-Gieseker semistable one-dimensional sheaves and pair-chamber
classes of smaller \(H\)-degree. The bracket and the wall-crossing coefficients
are the bracket \([-,-]_{\mathrm J}\) and the coefficients of
Definition~\ref{def:joyce-coefficients}, constructed in Joyce's homological
wall-crossing formalism \cite[\S4]{JoyceWC}. This step follows \cite{AJpt1} in
form, but uses the fixed source \(E_N\) and allows \(N>1\). The output of this
first step is the pair-chamber invariant.
\item We prove rationality of the resulting pair-chamber descendent series in
Section~\ref{sec:pair-chamber-rationality}, using periodicity obtained from
tensoring by \(\OO_S(H)\) and an induction on \(H\cdot\beta\).
\item We compare the pair chamber with the pure Quot locus of
Definition~\ref{def:pure-quot-locus}, and then with the full Quot scheme, by the
image-cokernel and torsion stratifications of Definition~\ref{def:correction-strata}.
The full Quot scheme means the original Quot scheme \(\Quot_{S,N}(\beta,n)\),
where the target quotient may have zero-dimensional torsion.
\item In Section~\ref{sec:image-cokernel-curve-quot}, we reduce the
image-cokernel stratification to relative Quot theories on support curves. In
Sections~\ref{sec:reduced-support-local-factorization}--\ref{sec:local-singular-factors},
we further factor those curve-Quot contributions into a smooth normalization
part and finitely many local singular curve factors, and prove rationality of each
part.
\item In Section~\ref{sec:torsion-identification}, we prove that the torsion
stratification contributes, coefficient by coefficient, the universal punctual
smooth-surface factor, namely the local generating series for zero-dimensional
quotients of a free module over the completed smooth surface ring \(\C[[x,y]]\).
\end{enumerate}

The wall-crossing input is Joyce's homological wall-crossing theorem for $\C$-linear abelian categories \cite{JoyceWC}. The fixed-slope adjacent walls use Bojko's verification of Joyce's hypotheses for fixed-source Pairs categories \cite{Bojko}. The fixed-source stable-pair moduli are realized by Lin's GIT construction \cite[Theorem~1.1 and \S4]{Lin}. Whenever we write the two-term complex \(I^\bullet=[\OO_S^{\oplus N}\to F]\), we place \(\OO_S^{\oplus N}\) in cohomological degree \(0\) and \(F\) in cohomological degree \(1\). With this convention, the stable-pair obstruction theory is the Behrend--Fantechi obstruction theory obtained from the deformation complex of \(I^\bullet\) \cite{BF,Lin,KoolThomasI}. The reduced-support local curve factors are compared with the finite conductor decomposition used by Huang--Jiang for Quot zeta functions of torsion-free sheaves on reduced singular curves \cite{HuangJiang}.

\section{Fixed-source Pairs categories and endpoint geometry}\label{sec:pairs-endpoint}

The ample divisor $H$ fixed in the introduction remains fixed throughout the paper. For a pure one-dimensional sheaf $F$ with $c_1(F)=\beta$ and $\chi(F)=n$, we write
\begin{equation}\label{eq:slope-muH}
\mu_H(F)=\mu_H(\beta,n):=\frac{n}{H\cdot \beta}.
\end{equation}
Equivalently, the reduced Hilbert polynomial of $F$ is $m+\mu_H(F)$.

\begin{definition}[Pure Quot locus]\label{def:pure-quot-locus}
For \(\beta\neq 0\) and \(n\in\Z\), we denote by
\[
\Quot^{\mathrm{pure}}_{S,N}(\beta,n)
\subset \Quot_{S,N}(\beta,n)
\]
the open subscheme parametrizing quotients whose target sheaf is pure of dimension one.
The large-\(c\) pair chamber \(P^{\mathrm{pair}}_{S,N}(\beta,n)\) is the Lin stable-pair moduli scheme of Definition~\ref{def:large-c-pair-chamber}.
On its stable locus, the deformation theory of the two-term complex
\[
I^\bullet=[\OO_S^{\oplus N}\xrightarrow{\rho}F]
\]
gives a Behrend--Fantechi perfect obstruction theory with tangent and obstruction spaces
\[
\Ext^0_S(I^\bullet,F),\qquad \Ext^1_S(I^\bullet,F),
\]
cf. \cite{BF,Lin,KoolThomasI}. The virtual classes used below are the resulting virtual classes.
\end{definition}

\begin{remark}[A convention on strata]\label{rem:strata-convention}
Whenever we say ``stratum'' below, we mean a locally-closed subscheme, equipped with its reduced induced structure unless a different scheme structure is explicitly specified. All decompositions indexed by lengths are decompositions into such locally-closed strata, rather than assertions that the ambient scheme is a scheme-theoretic coproduct of open-and-closed components.

\end{remark}

\begin{definition}[Image-cokernel and torsion stratifications]\label{def:correction-strata}
The two comparisons used in Section~\ref{sec:pair-to-quot} are the following.
\begin{enumerate}[label=\textup{(\alph*)}]
\item Let \(\rho:E_N\to F\) be a point of \(P^{\mathrm{pair}}_{S,N}(\beta,n)\), and set
\[
I:=\im(\rho),\qquad T:=F/I.
\]
Since \(F\) is pure and \(\coker\rho\) is zero-dimensional, \(I\) is a pure one-dimensional quotient of \(E_N\), and \(T\) is zero-dimensional. The \emph{image-cokernel stratum of length \(m\)} is the locus where \(\length(T)=m\). The corresponding contribution to the comparison from the pair chamber to the pure Quot locus is called the \emph{image-cokernel correction}; its functorial relative Quot description is Theorem~\ref{thm:first-corr}.
\item Let \(E_N\twoheadrightarrow Q\) be a point of \(\Quot_{S,N}(\beta,n)\), and let \(T_0(Q)\subset Q\) be its maximal zero-dimensional subsheaf. The quotient
\[
I:=Q/T_0(Q)
\]
is pure one-dimensional whenever \(Q\) has one-dimensional support class \(\beta\neq0\). The \emph{torsion stratum of length \(m\)} is the locus where \(\length(T_0(Q))=m\). The corresponding contribution to the comparison from the pure Quot locus to the full Quot scheme is called the \emph{torsion correction}; its functorial relative Quot description is Theorem~\ref{thm:second-corr}.
\end{enumerate}
Thus the image-cokernel stratification compares the pair chamber with the pure Quot locus, and the torsion stratification compares the pure Quot locus with the full Quot scheme.
\end{definition}

\begin{proposition}[Line-bundle twist]\label{prop:twist}
Let $M\in\Pic(S)$. Tensoring by $M$ induces isomorphisms
\[
\Quot_{S,N}(\beta,n)\xrightarrow{\sim}
\Quot_S(M^{\oplus N},\beta,n+\beta\cdot c_1(M)),
\]
\[
\Quot^{\mathrm{pure}}_{S,N}(\beta,n)\xrightarrow{\sim}
\Quot^{\mathrm{pure}}_S(M^{\oplus N},\beta,n+\beta\cdot c_1(M)),
\]
and
\[
P^{\mathrm{pair}}_{S,N}(\beta,n)\xrightarrow{\sim}
P^{\mathrm{pair}}_S(M^{\oplus N},\beta,n+\beta\cdot c_1(M)).
\]
\end{proposition}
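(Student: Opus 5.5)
The plan is to work functorially: define the twisting map on $T$-points for an arbitrary base scheme $T$, check that it preserves the defining conditions and shifts the numerical invariants as stated, and observe that tensoring by $M^{-1}$ gives the inverse. Representability then turns this into isomorphisms of the moduli schemes.

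\textbf{Quot schemes.} A $T$-point of $\Quot_{S,N}(\beta,n)$ is a surjection $\OO_{T\times S}^{\oplus N}\twoheadrightarrow \Qq_T$ with $\Qq_T$ flat over $T$ and fibres of class $(\beta,n)$. Since $\pi_S^*M$ is locally free, tensoring by it is exact. It sends this surjection to $(\pi_S^*M)^{\oplus N}\twoheadrightarrow \Qq_T\otimes\pi_S^*M$, which is again surjective and flat over $T$. The construction commutes with base change, and tensoring by $\pi_S^*M^{-1}$ inverts it. For the numerics, a fibre $Q$ is a rank-$0$ sheaf with $\ch(Q)=(0,\beta,\ch_2(Q))$, so
\[
\ch(Q\otimes M)=\ch(Q)\,e^{c_1(M)}=(0,\beta,\ch_2(Q)+\beta\cdot c_1(M)).
\]
Hirzebruch--Riemann--Roch then gives
\[
\chi(Q\otimes M)=\chi(Q)+\beta\cdot c_1(M)
\]
(the $\td_1$ term contributes the same $\frac12 c_1(S)\cdot\beta$ on both sides, so it cancels), while $c_1(Q\otimes M)=\beta$. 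Hence the two Quot functors are naturally isomorphic, and the first isomorphism follows from representability.

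\textbf{Pure Quot locus.} Purity of a fibre is preserved by tensoring with a line bundle, because subsheaves of $Q\otimes M$ correspond bijectively to subsheaves of $Q$ via $-\otimes M^{-1}$, and this correspondence preserves dimension of support. So the open subschemes $\Quot^{\mathrm{pure}}$ correspond under the first isomorphism, which gives the second isomorphism.

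\textbf{Pair moduli.} A pair $\rho:E_N\to F$ is sent to $\rho\otimes M:M^{\oplus N}\to F\otimes M$, in families over $T$. The three defining properties are preserved: $F\otimes M$ is pure one-dimensional, it has $c_1(F\otimes M)=\beta$ and $\chi(F\otimes M)=n+\beta\cdot c_1(M)$, and $\coker(\rho\otimes M)=\coker(\rho)\otimes M$ is zero-dimensional. Lin's moduli scheme for the source $M^{\oplus N}$ in the large-$\delta$ chamber represents exactly such pairs: in that chamber, stability reduces to these three conditions, as recorded in Definition~\ref{def:large-c-pair-chamber}. Hence twisting gives a natural isomorphism of moduli functors, with inverse given by twisting by $M^{-1}$, and therefore an isomorphism of schemes.

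\textbf{Main obstacle.} The only delicate point is the last step, namely making sure that the scheme structures match, not just the sets of closed points. This requires identifying Lin's large-$\delta$ stable pairs, for each of the two sources $E_N$ and $M^{\oplus N}$, with the pure-target, zero-dimensional-cokernel description, uniformly in flat families. I would first check that Lin's $\delta$-stability for the source $E$ is defined via reduced Hilbert polynomials shifted by $\delta$. Twisting shifts every reduced Hilbert polynomial, of the sheaf, of its quotients and of subpairs alike, by the same amount. Since the source is twisted along with the target, this shift cancels in every comparison, so twisting preserves $\delta$-stability for every $\delta$ and not merely in the limiting chamber. Consequently the twist identifies Lin's moduli problems for the two sources at corresponding parameters, including the large-$\delta$ chamber, and the isomorphism of functors yields an isomorphism of the representing schemes.
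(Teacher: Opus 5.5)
Your first three paragraphs are the paper's argument: tensor the quotient sequence by $M$, compute $\chi$ by Riemann--Roch, invert with $M^{-1}$, and note that purity and zero-dimensionality of the cokernel survive the twist. You write it out on $T$-points, and it is fine. The final paragraph, however, rests on a false claim. Twisting by an arbitrary $M$ does \emph{not} shift the reduced Hilbert polynomials of $F$, of its subsheaves and of its quotients by a common amount. A subsheaf of class $\beta'$ is shifted by $\beta'\cdot c_1(M)/(H\cdot\beta')$, while $F$ is shifted by $\beta\cdot c_1(M)/(H\cdot\beta)$. The source enters Lin's inequalities only through whether $\im\rho\subset F'$, so twisting it cancels nothing. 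For example, on $S=\mathbb P^1\times\mathbb P^1$ with $H=\OO(1,1)$ and $M=\OO(1,0)$, both ruling classes have $H$-degree $1$, but their $M$-degrees are $0$ and $1$. A sheaf whose class is the sum of the rulings is shifted by $1/2$, while a subsheaf on one ruling is shifted by $0$ or by $1$, so slope comparisons can flip after twisting. Hence Lin stability at finite $\delta$ (and Gieseker stability) is not twist-invariant in general. The wall sets of the two moduli problems are also not translates of each other, so there are no \emph{corresponding parameters}. This is exactly the caveat of Remark~\ref{rem:twist-stability}, where only twists by powers of $\OO_S(H)$ are used for stability.

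The fix is to drop that paragraph, because your pair-moduli paragraph already gives the right reason. For each source, beyond all walls of its own class (Proposition~\ref{prop:endpoint}), stability is equivalent to two fibrewise conditions: the target is pure one-dimensional of the given class, and the cokernel is zero-dimensional. These conditions are open in flat families and manifestly twist-invariant. In that chamber stable equals semistable, and such pairs are rigid: if $b\circ\rho=\rho$, then $b-\id$ factors through the zero-dimensional $\coker\rho$ into the pure sheaf $F$, so $b=\id$. Therefore Lin's scheme represents the functor of flat families with these fibrewise properties. Your natural isomorphism of functors then gives the isomorphism of schemes without comparing stability at any finite $\delta$.
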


\begin{proof}
If
\[
0\to K\to \OO_S^{\oplus N}\to Q\to 0
\]
represents a quotient, tensoring by $M$ gives
\[
0\to K\otimes M\to M^{\oplus N}\to Q\otimes M\to 0.
\]
Since $Q$ has rank $0$, its curve class is unchanged. For a one-dimensional sheaf,
\[
\chi(Q\otimes M)=\chi(Q)+c_1(Q)\cdot c_1(M)=n+\beta\cdot c_1(M)
\]
by Riemann--Roch. Tensoring by $M^{-1}$ gives the inverse. The same argument applies to the pure Quot locus and to the pair chamber.
\end{proof}

\begin{remark}[Twisting and Gieseker semistability]\label{rem:twist-stability}
Proposition~\ref{prop:twist} gives the geometric line-bundle twist used below.  For the periodicity argument we use the ample line bundle \(\OO_S(H)\) defining the polarization.  If \(F\) is pure one-dimensional, then
\[
p_F(m)=m+\frac{\chi(F)}{H\cdot c_1(F)},
\qquad
p_{F\otimes \OO_S(H)}(m)=p_F(m)+1.
\]
Thus tensoring by \(\OO_S(H)\) preserves Gieseker semistability, and tensoring by \(\OO_S(kH)\) shifts the reduced Hilbert polynomial by \(k\). Proposition~\ref{prop:twist} is not used as a statement that an arbitrary line-bundle twist preserves Simpson semistability; the semistability-preserving periodicity used below is the special twist by powers of \(\OO_S(H)\).
\end{remark}

\begin{definition}[Interval Pairs category]\label{def:interval-pairs-category}
Fix a compact interval $I=[a,b]\subset\R$. Let $\A^I_{S,N}$ be the exact category whose objects are triples $(V,F,\rho)$ such that:
\begin{enumerate}[label=\textup{(\roman*)}]
\item $F\in\Coh(S)$ has dimension at most one,
\item $V$ is a finite-dimensional complex vector space with $\dim V\in\{0,1\}$,
\item $\rho:V\otimes \OO_S^{\oplus N}\to F$ is a morphism,
\item every one-dimensional Harder--Narasimhan factor of $F$ with respect to Gieseker stability has reduced Hilbert slope in $I$.
\end{enumerate}
In the rest of this section an ``object'' means a triple $(V,F,\rho)$ of this form. Its numerical type is $(d,\beta,n)$, where
\[
d:=\dim V\in\{0,1\},\qquad \beta=c_1(F),\qquad n=\chi(F).
\]
For a fixed real number $c$, define
\[
\mprime_{H,c}(d,\beta,n)=
\begin{cases}
\dfrac{n}{H\cdot \beta},& d=0,\; \beta\neq 0,\\[1ex]
\infty,& d=0,\; \beta=0,\\[1ex]
c,& d=1.
\end{cases}
\]
\end{definition}

\begin{proposition}[Endpoint theorem]\label{prop:endpoint}
Fix a numerical class $(1,\beta,n)$ with $\beta\neq 0$ and an interval $I=[a,b]$ containing every slope of a one-dimensional Harder--Narasimhan factor occurring in that class.
\begin{enumerate}[label=\textup{(\roman*)}]
\item If $c<a$, then there are no $\mprime_{H,c}$-semistable objects of class $(1,\beta,n)$.
\item If $c>b$, then the $\mprime_{H,c}$-semistable objects of class $(1,\beta,n)$ are exactly the pairs in $P^{\mathrm{pair}}_{S,N}(\beta,n)$ whose target has Harder--Narasimhan slopes in $I$.
\end{enumerate}
\end{proposition}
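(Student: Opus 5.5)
The plan is to argue directly with the weak stability function $\mprime_{H,c}$ in the exact category $\A^I_{S,N}$. The tools are the subobjects and quotients that $\rho$ forces, together with the Harder--Narasimhan filtration of $F$.

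Throughout, write an object of class $(1,\beta,n)$ as $(\C,F,\rho)$. Its subobjects are of two kinds:
\begin{itemize}
\item $(0,F',0)$ with $F'\subset F$;
\item $(\C,F',\rho')$ with $\im\rho\subset F'$.
\end{itemize}
Its quotients are correspondingly of the form $(\C,F/F',\bar\rho)$ or $(0,F/F',0)$. The basic seesaw for weak stability functions applies: for a short exact sequence $0\to A\to E\to B\to 0$, the object $E$ is semistable iff $\mprime(A)\le\mprime(B)$ for every such sequence. Equivalently, $\mprime(A)\le \mprime(E)$ for subobjects $A$, where a $d=1$ subobject always has value $c=\mprime(E)$.

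\textbf{Case (i), $c<a$.} Take the quotient $(\C,F,\rho)\twoheadrightarrow(0,F/F',0)$, where $F'$ is chosen so that $F/F'$ is the last Harder--Narasimhan factor $F_{\min}$ of $F$. This is a one-dimensional factor, since $\beta\neq0$ and the zero-dimensional torsion sits at the start of the filtration. Its slope lies in $I$, so it is at least $a>c$. The kernel is $(\C,F',\rho)$, of value $c$. Semistability would require $c\le \mu_H(F_{\min})$ for \emph{every} quotient, so this alone gives no violation; I need the opposite inequality. I therefore use subobjects instead. The subobject $(0,F_{\max},0)$, where $F_{\max}$ is the first HN factor (possibly zero-dimensional, with value $\infty$), has value $\mprime\ge a>c$ if it is one-dimensional and $\infty>c$ if it is zero-dimensional. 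It then destabilizes, because its value exceeds $\mprime(E)=c$. So there are no semistable objects.

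\textbf{Case (ii), $c>b$: semistable implies pair.} Let $E=(\C,F,\rho)$ be semistable. Any subobject $(0,F',0)$ must satisfy $\mprime(F')\le c$. First, a nonzero zero-dimensional subsheaf has value $\infty$, so $F$ is pure. Second, for the cokernel, consider the subobject $(\C,\im\rho,\rho)$ with quotient $(0,F/\im\rho,0)$. Semistability needs $c\le\mprime(F/\im\rho)$. If $F/\im\rho$ has one-dimensional part, then it has a one-dimensional HN quotient with slope at most $b$. I must check that this slope is in $I$; the set of quotients is controlled by $F$'s HN slopes, and the minimal slope of the quotient is at least the minimal slope of $F$, which is at least $a$, but it is not obviously at most $b$. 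Instead I take the further quotient $(0,G,0)$, where $G$ is the minimal-slope HN quotient of $F/\im\rho$. Its slope is at most the slope of $F/\im\rho$'s maximal destabilizing part. Bounding this by $b$ is where I would need care. The fix is to note that $\mu_{\min}$ of a quotient of $F$ is at most $\mu_{\max}(F)\le b$, provided the one-dimensional part of the quotient is nonzero. This is true because a nonzero map $F\to G$ with $G$ semistable forces $\mu(G)\ge\mu_{\min}(F)$. So I need the reverse, which I get by choosing $G$ to be the first HN factor's image. This gives $\mprime=\mu(G)\le b<c$, which destabilizes. Hence $\coker\rho$ is zero-dimensional, and $E\in P^{\mathrm{pair}}$.

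\textbf{Case (ii), converse.} Let $E\in P^{\mathrm{pair}}$ have HN slopes in $I$. A subobject $(0,F',0)$ has $F'$ pure, so its value is at most $\mu_{\max}(F)\le b<c$. Subobjects of the form $(\C,F',\rho')$ have value $c$. Hence $E$ is semistable.

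\textbf{Main obstacle.} The subtle step is the slope bound for one-dimensional quotients of $F/\im\rho$ in (ii). The difficulty is that HN slopes of quotients of $F$ are bounded below by $\mu_{\min}(F)$, whereas the argument needs a bound from above. I expect to resolve it by reasoning with minimal-slope quotients of $F/\im\rho$ compared against $F$'s filtration, or by restricting the class to where the interval hypothesis of the statement covers these quotients.
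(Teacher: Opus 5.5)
\textbf{There is a genuine gap in the forward direction of (ii), and the converse of (ii) uses an invalid stability criterion. Part (i) is fine.}

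\textbf{Part (i).} Switching from quotients to the subobject $(0,F_{\max},0)$ is the correct orientation. So is using $(0,F,0)$ with quotient $(\C,0,0)$, as in Proposition~\ref{prop:lin}. In either case the sub has value $>c$ and the quotient has value $c$.

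\textbf{Forward direction of (ii).} The gap is the step you flag. Your fix rests on the claim that a one-dimensional quotient of $F$ has $\mu_{\min}\le\mu_{\max}(F)$. That claim is false. Passing to ``the first HN factor's image'' cannot repair it: that image is a \emph{subsheaf} of $\coker\rho$, with slope bounded below. What you need is a \emph{quotient} $G$ of $\coker\rho$ with $\mu'(0,G,0)<c$.

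In fact no bound in terms of the HN slopes of $F$ alone can exist. Take a globally generated line bundle $L$ of degree $d>0$ on a smooth curve $C\subset S$, and a section pair $(t_1,t_2)$ without common zeros. Map the first summand by $\OO_S\to\OO_C\hookrightarrow L^{\oplus 2}=F$ and the other summands to zero. Then $\coker\rho\cong L^{\otimes 2}$ has slope $\mu(F)+d/(H\cdot C)$. For $\mu(F)<c<\mu(L^{\otimes 2})$ this pair is $\mu'_{H,c}$-semistable, although its cokernel is one-dimensional. Indeed, its $d=0$ subobjects have slope $\le\mu(F)$, and the quotient by any $d=1$ subobject is either $L^{\otimes 2}$ or zero-dimensional.

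So the first option in your ``Main obstacle'' paragraph is impossible, and the second is the missing idea. The hypothesis that $I$ contains every one-dimensional HN slope occurring in the class must be applied to the $d=0$ factor $(0,G,0)$ itself. This is exactly how the paper argues:
\begin{itemize}
\item Take $G$ a pure one-dimensional quotient of $\coker\rho$, e.g.\ the last HN factor of $\coker\rho$ modulo its zero-dimensional torsion.
\item By the hypothesis on $I$, its slope lies in $I\subset(-\infty,c)$.
\item The kernel $(\C,\ker(F\to G),\rho)$ has value $c$, so $(0,G,0)$ is a destabilizing quotient.
\end{itemize}

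\textbf{Converse of (ii).} Your reformulation ``$\mu'(A)\le\mu'(E)$ for all subobjects $A$'' is not equivalent to the seesaw criterion $\mu'(A)\le\mu'(E/A)$ for this weak stability function. Every $d=1$ subobject has value $c=\mu'(E)$, so your reformulation imposes no condition on them. As a result, your converse never uses that $\coker\rho$ is zero-dimensional. It would equally ``prove'' that the pair in the example above is semistable for every $c>\mu(F)$. That is false once $c>\mu(L^{\otimes 2})$, where $(\C,\OO_C,\rho)$ with quotient $(0,L^{\otimes 2},0)$ destabilizes.

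The correct check, as in the paper, is this. For a proper subobject $A=(\C,F',\rho')$ one has $\im\rho\subseteq F'\subsetneq F$. Hence $E/A=(0,F/F',0)$ is a nonzero quotient of $\coker\rho$, so it is zero-dimensional with value $\infty\ge c$. Your treatment of the $d=0$ subobjects is correct: there $\mu(F')\le\mu_{\max}(F)\le b<c$, against a quotient of value $c$.
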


\begin{proof}
Let $(\C,F,\rho)$ have class $(1,\beta,n)$. If $c<a$, then any nonzero one-dimensional quotient of $F$ has slope $>c$. The quotient of $(\C,F,\rho)$ by the image of the source therefore destabilizes, so no semistable object exists.

Assume now $c>b$ and $(\C,F,\rho)$ is semistable. If $F$ had a zero-dimensional subsheaf $T$, then $(0,T,0)$ would be a destabilizing subobject because its slope is $\infty$. Hence $F$ is pure one-dimensional. If the cokernel of $\rho$ had a one-dimensional quotient $G$, then $G$ would have slope in $I\subset(-\infty,c)$, so the quotient object $(0,G,0)$ would destabilize. Hence $\coker\rho$ is zero-dimensional.

Conversely, suppose $F$ is pure and $\coker\rho$ is zero-dimensional. Let $(V',F',\rho')\subsetneq(\C,F,\rho)$ be a nonzero proper subobject. If $V'=0$, then $F'$ is one-dimensional and its slope is at most $b<c$, while the quotient has slope $c$. If $V'=\C$, then the quotient is zero-dimensional and has slope $\infty$. In both cases the semistability inequality holds. Thus the pair is semistable.
\end{proof}

\begin{lemma}[Finite wall set]\label{lem:finitewalls}
For fixed $(1,\beta,n)$ the set of values of $c$ at which $\mprime_{H,c}$-semistability can change is finite.
\end{lemma}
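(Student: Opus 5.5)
The plan is to show that every value of \(c\) at which \(\mprime_{H,c}\)-semistability of an object of class \((1,\beta,n)\) can change is the slope \(m/(H\cdot\gamma)\) of a \(d=0\) one-dimensional sheaf \(G\) that appears as a subobject or quotient of such an object. We then show that only finitely many such slopes occur. A wall can only occur when some subobject \((V',F',\rho')\) of a pair \((\C,F,\rho)\) of class \((1,\beta,n)\) has the same \(\mprime_{H,c}\)-phase as the ambient object. For \(d=1\) objects the phase is the constant \(c\), so equality forces either \(V'=0\) with \(\mprime(0,\gamma,m)=c\), or \(V'=\C\) with the quotient \((0,F/F',0)\) of slope \(c\). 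Zero-dimensional pieces have slope \(\infty\) and never produce a finite wall. Hence every wall value has the form \(c=m/(H\cdot\gamma)\), where \((\gamma,m)\) is the class of a one-dimensional subsheaf or quotient sheaf of the target \(F\) of some object of class \((1,\beta,n)\) that is semistable on one side of the wall.

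The second step bounds the denominators. Since \(\gamma\) is the class of a subsheaf or quotient of \(F\) with \(c_1(F)=\beta\), it is an effective class with \(\beta-\gamma\) effective. Therefore \(0<H\cdot\gamma\le H\cdot\beta\), and only finitely many values \(H\cdot\gamma\) occur.

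The third step bounds the numerators, and this is where the main work lies. We restrict to \(c\) in a compact interval \(I=[a,b]\) as in Proposition~\ref{prop:endpoint}. Outside \(I\) there are no walls, since by part (i) nothing is semistable for \(c<a\), and by part (ii) the semistable locus is constant for \(c>b\). Inside \(I\), we bound \(m\) using the Harder--Narasimhan filtration of \(F\) in the interval category \(\A^I_{S,N}\). All Gieseker HN slopes of \(F\) lie in \([a,b]\). For a one-dimensional subsheaf \(F'\subset F\), the maximal destabilizing slope gives \(\mu_H(F')\le b\). For a quotient \(G\), the minimal slope gives \(\mu_H(G)\ge a\). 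So any candidate wall slope \(m/(H\cdot\gamma)\) lies in \([a,b]\), and \(m\) lies in the bounded integer range \([a\,H\cdot\gamma,\;b\,H\cdot\gamma]\). Combined with the finitely many denominators, this gives finitely many candidate slopes and proves the lemma.

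The main obstacle is justifying that a single compact \(I\) works uniformly, as Proposition~\ref{prop:endpoint} presupposes. We need the set of HN slopes of targets of class-\((1,\beta,n)\) objects that are semistable for some \(c\) to be bounded. First, semistability for some \(c\) forces \(F\) to be generated by \(E_N\) up to zero-dimensional cokernel; any \(d=0\) quotient of slope below \(c\) destabilizes, as in the proof of Proposition~\ref{prop:endpoint}. So the one-dimensional quotients of \(F\) are quotients of \(\OO_S^{\oplus N}\) up to points, and their \(\chi\) is bounded below by a Grothendieck-type boundedness (Quot schemes of \(\OO_S^{\oplus N}\) with bounded \(H\cdot\gamma\)). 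Second, the maximal slope of a subsheaf is bounded above because its complement's \(\chi\) is bounded below while \(\chi(F)=n\) is fixed. I would try first to make the lower bound explicit via Castelnuovo–Mumford regularity of the family of quotients of \(\OO_S^{\oplus N}\) with \(c_1\le\beta\).
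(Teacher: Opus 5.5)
Your reduction is sound, and it is more explicit than the paper's proof. Every wall is a rational number $m/h$ with $1\le h\le H\cdot\beta$, so finiteness is equivalent to the walls lying in a bounded interval. That in turn needs a uniform bound on the Harder--Narasimhan slopes of targets of objects that are semistable for some $c$. The paper only asserts finitely many Harder--Narasimhan types ``by boundedness'', even though targets of arbitrary objects of class $(1,\beta,n)$ are unbounded, since $\rho=0$ is allowed.

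The gap is in your justification of exactly this bound. The claim that semistability for some $c$ forces $\coker\rho$ to be zero-dimensional is false. Subobjects $(\C,F',\rho)$ with $\im\rho\subset F'$ only impose $\mu_H(G)\ge c$ on one-dimensional quotients $G$ of $\coker\rho$. Zero-dimensionality follows only for $c$ beyond every wall, which is the thing you are trying to locate. A counterexample: on $S=\mathbb P^2$ let $C=L\cup L'$ be a nodal conic and $F$ a line bundle on $C$ of bidegree $(1,b)$ with $b\ge 2$. Let $\rho$ map $\OO_S^{\oplus N}$ onto $\OO_L=\ker(F\to F|_{L'})$. Every subsheaf of $F$ has slope at most $b$, and $\coker\rho=\OO_{L'}(b)$ has slope $b+1$. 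So this object of class $(1,2[L],b+2)$ is semistable for all $c\in[b,b+1]$, yet its cokernel is one-dimensional. Hence ``one-dimensional quotients of $F$ are quotients of $\OO_S^{\oplus N}$ up to points'' fails, and your upper bound on $\mu_{\max}(F)$ is unsupported as written.

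The repair is short. For a one-dimensional quotient $F\twoheadrightarrow Q$, let $I_Q$ be the image of $I=\im\rho$ in $Q$, so that $\chi(Q)=\chi(I_Q)+\chi(Q/I_Q)$.
\begin{itemize}
\item $I_Q$ is a quotient of $\OO_S^{\oplus N}$ of class at most $\beta$, so $\chi(I_Q)\ge L$ for a uniform constant $L\le 0$.
\item $Q/I_Q$ is a quotient of $\coker\rho$. Its pure part $G'$ satisfies $\mu_H(G')\ge c\ge n/(H\cdot\beta)$ by semistability and Proposition~\ref{prop:lin}, hence $\chi(Q/I_Q)\ge\min(0,n)$.
\end{itemize}
This uniform lower bound on $\chi(Q)$ is what your complement argument needs to bound $\mu_{\max}(F)$ above and $\mu_{\min}(F)$ below. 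The same computation with $G'$ the pure part of $\coker\rho$ gives $c\le\mu_H(G')\le|n-L|$. So for larger $c$ every semistable object has zero-dimensional cokernel, which fixes the upper end of the interval directly.

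Finally, $L$ cannot come from Castelnuovo--Mumford regularity of the family of all quotients of $\OO_S^{\oplus N}$ of class at most $\beta$, because that family is unbounded (e.g.\ $\OO_L(k)$, $k\ge 0$). Instead use that such a quotient $I'$ is a quotient of $\OO_C^{\oplus N}$ for its Fitting support $C$. Right-exactness of $H^1$ on curves then gives $h^1(I')\le N\,h^1(\OO_C)$, and $h^1(\OO_C)$ is bounded over the finitely many classes involved. Alternatively, invoke Grothendieck's lemma on slopes of quotients.
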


\begin{proof}
By boundedness of one-dimensional sheaves with fixed Chern character, only finitely many Harder--Narasimhan types occur among sheaves underlying objects of class $(1,\beta,n)$. Any wall occurs when $c$ equals the slope of a one-dimensional Harder--Narasimhan factor. Only finitely many such slopes occur.
\end{proof}

\begin{proposition}[Comparison with Lin's stable pairs]\label{prop:lin}
Fix a class $(1,\beta,n)$ with $h:=H\cdot\beta>0$ and $c\in\R$. Set
\[
\delta_c:=ch-n.
\]
If $c<n/h$, there are no $\mprime_{H,c}$-semistable objects of class $(1,\beta,n)$. If $c\ge n/h$, then an object
\[
(\OO_S^{\oplus N}\xrightarrow{\rho}F)
\]
of class $(1,\beta,n)$ is $\mprime_{H,c}$-semistable if and only if $F$ is pure one-dimensional and the pair is $\delta_c$-semistable in the sense of Lin \cite{Lin}. Consequently the coarse moduli space of $S$-equivalence classes of such semistable objects is projective.
\end{proposition}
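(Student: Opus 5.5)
The plan is to unwind both stability conditions into explicit inequalities on subsheaves $G\subset F$, and then check that they agree after substituting $\delta_c=ch-n$. The normalization is chosen so that
\[
\frac{\chi(F)+\delta_c}{H\cdot\beta}=\frac{n+ch-n}{h}=c,
\]
so Lin's reduced $\delta$-slope of the whole pair equals the $\mprime_{H,c}$-slope of the class $(1,\beta,n)$. Throughout I use the weak-stability (seesaw) convention: an object is semistable if and only if, for every short exact sequence $0\to A\to B\to C\to 0$ of nonzero objects in $\mathcal B_{E_N}$, we have $\mprime(A)\le\mprime(C)$.

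\emph{The case $c<n/h$.} Take the subobject $(\C,0,0)\subset(\C,F,\rho)$, of class $(1,0,0)$ and slope $c$. Its quotient is $(0,F,0)$. If $F$ has zero-dimensional torsion, the subobject $(0,T_0(F),0)$ already destabilizes, since its slope is $\infty$. Otherwise $F$ is pure with slope $n/h>c$, so $(\C,0,0)$ destabilizes. Either way no semistable object exists.

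\emph{The case $c\ge n/h$: classifying subobjects.} As in the proof of Proposition~\ref{prop:endpoint}, semistability forces $F$ to be pure, since a zero-dimensional subsheaf gives a subobject of slope $\infty$. For pure $F$, the nonzero proper subobjects of $(\C,F,\rho)$ fall into two types.
\begin{itemize}
\item $(0,G,0)$ with $0\ne G\subsetneq F$. The quotient has slope $c$, so the condition is $\mu_H(G)\le c$.
\item $(\C,G,\rho)$, where $\rho$ factors through $G$. The quotient is $(0,F/G,0)$, so the condition is $c\le\mprime(F/G)$. This holds automatically when $F/G$ is zero-dimensional. Otherwise it reads $c\,(h-h_G)\le n-\chi(G)$, where $h_G=H\cdot c_1(G)$.
\end{itemize}

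\emph{Matching with Lin's condition.} Lin's $\delta$-semistability for a pair $E_N\to F$ with $F$ pure one-dimensional requires, for every proper $G\subset F$,
\[
\frac{\chi(G)+\epsilon(G)\,\delta}{h_G}\le\frac{\chi(F)+\delta}{h},
\]
where $\epsilon(G)=1$ if $\rho$ factors through $G$ and $\epsilon(G)=0$ otherwise. With $\delta=\delta_c$ the right side is $c$.
\begin{itemize}
\item For $\epsilon(G)=0$ the inequality is exactly $\mu_H(G)\le c$, matching the first type.
\item For $\epsilon(G)=1$ it becomes $\chi(G)+ch-n\le ch_G$, which is the second condition above.
\item When $h_G=h$, the quotient $F/G$ is zero-dimensional. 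Lin's inequality then reduces to $\chi(G)\le n$, which always holds, so it matches the automatic case.
\end{itemize}
I will check carefully against \cite{Lin} that his normalization of the $\delta$-term, and his use of $\le$ for semistability, agree with the form written here. This bookkeeping is the main place an error could occur. I must also confirm that the boundary value $\delta_c=0$ (that is, $c=n/h$) lies in the allowed range of Lin's parameter. If Lin requires $\delta>0$, the boundary case must be treated directly: there the condition says $F$ is Gieseker semistable and places no constraint on $\rho$ beyond the subobject conditions above, which can be checked by hand.

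\emph{Projectivity.} Once the two semistability notions are identified, isomorphism and $S$-equivalence of objects in $\mathcal B_{E_N}$ with $d=1$ coincide with Lin's notions for pairs. Jordan--Hölder factors correspond as well: factors of type $(0,G,0)$ and $(\C,G,\rho)$ match Lin's filtrations by subpairs. Projectivity of the coarse moduli space then follows from Lin's GIT construction \cite[Theorem~1.1 and \S4]{Lin}.
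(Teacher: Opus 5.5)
Your argument for $c<n/h$ fails as written. A morphism $(a,b)\colon(\C,0,0)\to(\C,F,\rho)$ must satisfy $0=\rho\circ(a\otimes\id_{E_N})$. So $(\C,0,0)$ is a subobject only when $\rho=0$, which matches your own later rule that a $d'=1$ subobject $(\C,G,\rho)$ requires $\rho$ to factor through $G$. Even when $\rho=0$, your stated convention applied to $0\to(\C,0,0)\to(\C,F,0)\to(0,F,0)\to0$ only demands $c\le n/h$. That inequality \emph{holds} when $c<n/h$, so this sequence never destabilizes in the range you are treating.

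The sub and the quotient are interchanged. The correct sequence is
\[
0\to(0,F,0)\to(\C,F,\rho)\to(\C,0,0)\to0,
\]
given by $a=0$ and $b=\id_F$; it exists for every $\rho$. The required inequality $\mu'_{H,c}(0,\beta,n)=n/h\le c=\mu'_{H,c}(1,0,0)$ fails. Because $\mu'_{H,c}$ is defined on numerical types, no purity of $F$ is needed, so your torsion/pure case split is unnecessary. This one-line argument is the paper's.

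For $c\ge n/h$ your argument is the paper's. You split proper subobjects by $d'\in\{0,1\}$ and obtain $\mu_H(G)\le c$ and $c\le\mu'_{H,c}(F/G)$. You rewrite the latter via $\delta_c=ch-n$ as $(\chi(G)+\delta_c)/h_G\le c$, and you quote Lin's GIT theorem for projectivity. Your checks of $h_G=h$, of $\delta_c=0$, and of $S$-equivalence are extra care, not a different route.

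One point should be tightened, though the paper is equally terse here. The subobjects $(0,G,0)$ do not correspond only to Lin's $\epsilon(G)=0$ subsheaves, since $(0,G,0)\subset(\C,F,\rho)$ for every $G$, including $G\supseteq\im\rho$. For such $G$ the Pairs category imposes $\mu_H(G)\le c$ in addition to the $\epsilon=1$ inequality. So for the direction Lin $\Rightarrow$ Pairs you need $\chi(G)\le ch_G-\delta_c\le ch_G$, which uses $\delta_c\ge0$, i.e.\ exactly the hypothesis $c\ge n/h$.

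Similarly, when $\rho=0$ and $c>n/h$, the genuine subobject $(\C,0,0)$ does destabilize. On Lin's side this is the $G=0$ inequality $h\,\delta_c\le 0$ in cross-multiplied form, or alternatively Lin's convention excluding $\rho=0$. Whichever applies, you should state it explicitly in your matching.
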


\begin{proof}
Let $P=(\OO_S^{\oplus N}\xrightarrow{\rho}F)$ have class $(1,\beta,n)$. If $c<n/h$, the subobject $(0,F,0)\subset P$ has slope $n/h>c$, so no such object is semistable. Assume from now on that $c\ge n/h$. By the previous proposition, $\mprime_{H,c}$-semistability forces $F$ to be pure one-dimensional. Let $(V',F',\rho')\subset P$ be a proper subpair of class $(d',\beta',n')$ and write $h'=H\cdot\beta'$. If $d'=0$, then the quotient still has $d=1$, so the semistability inequality is
\[
\frac{n'}{h'}\le c.
\]
If $d'=1$, then the quotient has class $(0,\beta'',n'')$ with $\beta''=\beta-\beta'$ and $n''=n-n'$, so the semistability inequality is
\[
c\le \frac{n''}{H\cdot\beta''}.
\]
Since $\delta_c=ch-n$, the second inequality is equivalent to
\[
\frac{n'+\delta_c}{h'}\le c.
\]
The two displayed inequalities are precisely Lin's semistability inequalities for a fixed source sheaf $\OO_S^{\oplus N}$ and stability parameter $\delta_c$ \cite[\S\,1]{Lin}. The projectivity of the coarse moduli space is Lin's GIT theorem.
\end{proof}

\section{Fixed-class wall-crossing recursion}\label{sec:wall-crossing}

The categorical wall-crossing is performed at fixed numerical class. The one-parameter family $c\mapsto\mprime_{H,c}$ has finitely many walls for a given class, and the destabilizing $d=0$ factors at an adjacent wall all have the reduced Hilbert polynomial determined by that wall.

We spell out the notation used in the wall-crossing formula~\eqref{eq:wall-crossing-recursion}. Let \(\mathfrak M\) be the moduli stack of objects in the fixed-source Pairs category \(\mathcal B_{E_N}\). Following Joyce's construction of rigidified moduli stacks for Pairs categories, let
\[
\mathfrak M^{pl}
\]
be the projective-linear rigidification, namely the rigidification of the open substack of nonzero objects by the central scalar automorphisms \(\Gm\); see \cite[\S5.8]{JoyceSurf} for the Pairs-stack construction and \cite[\S4.1]{JoyceWC} for the rigidified stack used in homological wall-crossing.

For a numerical class \(\nu\), write \(\mathfrak M^{pl}_\nu\subset \mathfrak M^{pl}\) for the corresponding open-and-closed component. Let
\[
\chi_{\mathcal B}(\nu_1,\nu_2)
\]
be the Euler form of the fixed-source Pairs category; in our notation it is the form obtained from the Ext groups in the category \(\mathcal B_{E_N}\). Joyce's grading-shifted homology is the direct sum
\begin{equation}\label{eq:check-homology-shift}
\check H_i(\mathfrak M^{pl})
:=
\bigoplus_{\nu\neq 0}H_{i+2-2\chi_{\mathcal B}(\nu,\nu)}(\mathfrak M^{pl}_\nu,\Q).
\end{equation}
The shift in \eqref{eq:check-homology-shift} is the explicit Euler-form shift used in Joyce's construction of the Lie algebra associated to the vertex algebra. The summation is over nonzero numerical classes because the rigidification by scalar automorphisms is used on nonzero objects in the positive cone. Joyce constructs on \(\check H_*(\mathfrak M^{pl})\) a Lie bracket
\[
[-,-]_{\mathrm J},
\]
see \cite[\S4]{JoyceWC}; the corresponding surface Pairs stack and its Lie algebra are described in \cite[\S\S5.8--5.10]{JoyceSurf}.

For a pure one-dimensional sheaf numerical type \((0,\gamma,m)\), where \(\gamma=c_1(F)\) and \(m=\chi(F)\), we write
\[
M^{ss}_{(\gamma,m)}=M^{ss}_{S,H}(\gamma,m)
\]
for Simpson's projective coarse moduli scheme of \(H\)-Gieseker semistable pure one-dimensional sheaves with \(c_1=\gamma\) and \(\chi=m\) \cite{Simpson}. The invariant class
\[
[M^{ss}_{(\gamma,m)}]^{\mathrm{inv}}
\]
means Joyce's homological invariant class constructed in \cite[\S4]{JoyceWC}. If, for the chosen stability condition and class, the stable locus equals the semistable locus, then this Joyce invariant class agrees with the Behrend--Fantechi virtual class on the stable moduli space, with obstruction theory induced by \(R\pi_*R\mathcal H\!om(\mathcal F,\mathcal F)_0[1]\). Similarly,
\[
[P^{\mathrm{pair}}_{S,N}(\beta,n)]^{\mathrm{inv}}
\]
denotes the Joyce invariant class of the large-\(c\) pair chamber. When the corresponding stable-pair moduli problem has no strictly semistable objects, this class agrees with the virtual class supplied by the perfect obstruction theory in Definition~\ref{def:pure-quot-locus}. The coefficients in the wall-crossing formula~\eqref{eq:wall-crossing-recursion} are the following Joyce coefficients.

\begin{definition}[Permissible classes]\label{def:permissible-classes}
A numerical type \(\nu=(d,\gamma,m)\) is called \emph{permissible} in this paper if it lies in the positive cone used for Joyce's wall-crossing theory and the corresponding semistable moduli stack for the stability condition under consideration is one of the classes for which Joyce's invariant class is defined. Equivalently for the terms appearing in \eqref{eq:wall-crossing-recursion}, \((0,\gamma,m)\) is permissible when it is represented by a nonzero semistable one-dimensional sheaf, and \((1,\gamma,m)\) is permissible when it is represented by a nonzero semistable object of the fixed-source Pairs category.
\end{definition}

\begin{definition}[Joyce coefficients]
\label{def:joyce-coefficients}
Let $\theta_-$ and $\theta_+$ denote the weak stability functions on numerical classes obtained from $\mprime_{H,c_-}$ and $\mprime_{H,c_+}$. For an ordered list $\nu_1,\dots,\nu_r$ of nonzero numerical classes, Joyce defines
\[
S(\nu_1,\dots,\nu_r;\theta_-,\theta_+)
\]
by the following rule. For each $i=1,\dots,r-1$, one must have either
\[
\theta_-(\nu_i)\le \theta_-(\nu_{i+1})
\quad\text{and}\quad
\theta_+(\nu_1+\cdots+\nu_i)>\theta_+(\nu_{i+1}+\cdots+\nu_r),
\tag{S1}\label{eq:joyce-S1}
\]
or
\[
\theta_-(\nu_i)> \theta_-(\nu_{i+1})
\quad\text{and}\quad
\theta_+(\nu_1+\cdots+\nu_i)\le\theta_+(\nu_{i+1}+\cdots+\nu_r).
\tag{S2}\label{eq:joyce-S2}
\]
If this condition fails for some $i$, set $S=0$. Otherwise set $S=(-1)^a$, where $a$ is the number of indices satisfying \textup{(S1)}.
Joyce's coefficient $U$ is
\begin{equation}\label{eq:joyce-U}
\begin{aligned}
&U(\nu_1,\dots,\nu_r;\theta_-,\theta_+) \\
&=
\sum_{
\substack{1\le l\le m\le r,\;0=a_0<a_1<\cdots<a_m=r,\\
0=b_0<b_1<\cdots<b_l=m}}
\frac{(-1)^{l-1}}{l}
\prod_{j=1}^{l}
S(\eta_{b_{j-1}+1},\dots,\eta_{b_j};\theta_-,\theta_+)
\prod_{i=1}^{m}\frac{1}{(a_i-a_{i-1})!},
\end{aligned}
\end{equation}
where $\eta_i=\nu_{a_{i-1}+1}+\cdots+\nu_{a_i}$, and the sum is restricted to those choices for which
\[
\theta_-(\nu_j)=\theta_-(\eta_i)
\quad(a_{i-1}<j\le a_i),
\qquad
\theta_+(\eta_{b_{j-1}+1}+\cdots+\eta_{b_j})=\theta_+(\nu_1+\cdots+\nu_r).
\]
The coefficient
\[
\widetilde U(\nu_1,\dots,\nu_r;\theta_-,\theta_+)
\]
is Joyce's Lie coefficient: it is the coefficient defined in \cite[\S4.1 and Thm.~5.8]{JoyceWC} by rewriting the universal-enveloping-algebra wall-crossing expression with coefficients $U$ as a left-nested Lie-bracket expression. Equivalently, for every Lie algebra $L$ and elements $x_{\nu_i}\in L$, Joyce's coefficients are characterized by rewriting
\[
\sum_{\nu_1+\cdots+\nu_r=\nu}
U(\nu_1,\dots,\nu_r;\theta_-,\theta_+)
 x_{\nu_1}*\cdots* x_{\nu_r}
\]
in the universal enveloping algebra $U(L)$ as
\begin{equation}\label{eq:joyce-Lie-rewrite}
\sum_{\nu_1+\cdots+\nu_r=\nu}
\widetilde U(\nu_1,\dots,\nu_r;\theta_-,\theta_+)
[\cdots[[x_{\nu_1},x_{\nu_2}],x_{\nu_3}],\dots,x_{\nu_r}].
\end{equation}
Throughout this paper
\[
\widetilde U((\nu_1,\dots,\nu_r);c_-,c_+)
\]
means the coefficient in \eqref{eq:joyce-Lie-rewrite} with $\theta_\pm=\mprime_{H,c_\pm}$.
\end{definition}

\begin{definition}[Degree-zero pair factor in the recursion]\label{def:degree-zero-pair-factor}
In the wall-crossing recursion, the symbol
\begin{equation}\label{eq:degree-zero-pair-factor}
\bigl[P^{\mathrm{pair}}_{S,N}(0,n_0)\bigr]^{\mathrm{inv}}
\end{equation}
means the following. If \(n_0=0\), it is the Joyce invariant class of the
source-only object
\begin{equation}\label{eq:source-only-object}
(\C,0,0)\in \mathcal B_{E_N},
\end{equation}
that is, the object with one-dimensional vector-space component and zero target
sheaf. If \(n_0\ne 0\), we set
\begin{equation}\label{eq:zero-pair-factor-vanishing}
\bigl[P^{\mathrm{pair}}_{S,N}(0,n_0)\bigr]^{\mathrm{inv}}:=0.
\end{equation}
For \(H\cdot \beta_0>0\), the notation
\(\bigl[P^{\mathrm{pair}}_{S,N}(\beta_0,n_0)\bigr]^{\mathrm{inv}}\) has the meaning given in
Definition~\ref{def:large-c-pair-chamber}. Thus the case \(\beta_0=0\) is allowed
in Theorem~\ref{thm:recursion}, but only through the convention above.
\end{definition}

\begin{definition}[Common Lin parameter scheme]\label{def:common-lin-parameter-scheme}
Fix a numerical type \((1,\beta,n)\) with \(h:=H\cdot\beta>0\), and fix two adjacent
values \(c_-<c_+\). Put
\begin{equation}\label{eq:lin-delta-pm}
\delta_{\pm}:=c_{\pm}h-n.
\end{equation}
Let
\begin{equation}\label{eq:hilbert-polynomial-beta-n}
P_{\beta,n}(t):=ht+n
\end{equation}
be the Hilbert polynomial of the target sheaf. A \emph{common
Castelnuovo--Mumford regularity bound} for \((\beta,n;c_-,c_+)\) is an
integer \(m_0\) such that every target sheaf \(F\) occurring in a
\(\delta_-\)- or \(\delta_+\)-semistable Lin pair
\begin{equation}\label{eq:lin-pair-for-common-parameter}
E_N=\OO_S^{\oplus N}\longrightarrow F
\end{equation}
with Hilbert polynomial \(P_{\beta,n}\) is \(m_0\)-regular with respect to \(H\). In
particular, for every such \(F\),
\begin{equation}\label{eq:regularity-consequences}
H^i(S,F(m_0H))=0\text{ for }i>0,
\qquad
H^0(S,F(m_0H))\otimes\OO_S(-m_0H)\longrightarrow F
\end{equation}
is surjective. Such an \(m_0\) exists by boundedness in Lin's GIT construction;
if necessary, we increase \(m_0\) so that it works for both adjacent stability
parameters \(\delta_-\) and \(\delta_+\).

Choose a vector space \(W\) of dimension \(P_{\beta,n}(m_0)\). Lin's construction
then uses the open part of the Quot scheme
\begin{equation}\label{eq:lin-quot-parameter}
\mathcal R_{m_0}\subset
\Quot_S\bigl(W\otimes\OO_S(-m_0H),P_{\beta,n}\bigr)
\end{equation}
where the induced map
\begin{equation}\label{eq:lin-framing-condition}
W \longrightarrow H^0(S,F(m_0H))
\end{equation}
is an isomorphism. Over \(\mathcal R_{m_0}\), let \(\mathcal F_{m_0}\) be the universal sheaf.
The \emph{common Lin parameter scheme}
\begin{equation}\label{eq:common-lin-Z}
Z=Z_{m_0}(\beta,n;E_N)
\end{equation}
is Lin's finite-type parameter scheme for the data consisting of a framed
quotient in \(\mathcal R_{m_0}\) together with a morphism
\begin{equation}\label{eq:universal-lin-morphism}
E_N\longrightarrow F.
\end{equation}
Equivalently, it is the parameter scheme used in Lin's GIT construction of stable
pairs with fixed source \(E_N\) and Hilbert polynomial \(P_{\beta,n}\); see
\cite[\S1 and Theorem~1.1]{Lin}. The group
\begin{equation}\label{eq:lin-git-group}
G:=\GL(W)
\end{equation}
acts by changing the framing. For each \(\delta\) in the adjacent interval,
Lin's stability is obtained from a \(G\)-linearized ample line bundle
\(\mathcal L_{\delta}\) on the same underlying scheme \(Z\). Thus the adjacent chambers
for \(c_-\) and \(c_+\) are obtained from the same parameter scheme \(Z\) by changing
only the linearization from \(\mathcal L_{\delta_-}\) to \(\mathcal L_{\delta_+}\).
\end{definition}

\begin{definition}[Adjacent-wall Thaddeus master space]\label{def:thaddeus-master-space}
Fix a wall value \(\sigma\in W(\beta,n)\) and choose adjacent parameters
\[
c_+>\sigma>c_-
\]
with no other wall between \(c_-\) and \(c_+\). Let
\[
Z=Z_{m_0}(\beta,n;E_N)
\]
be the common Lin parameter scheme of
Definition~\ref{def:common-lin-parameter-scheme}, chosen using a common
Castelnuovo--Mumford regularity bound \(m_0\) for the two adjacent parameters.
Let \(G=\GL(W)\) be the GIT group acting on \(Z\), and let \(\mathcal L_+\) and
\(\mathcal L_-\) be the two \(G\)-linearized ample line bundles on \(Z\) defining
Lin's GIT semistability for the parameters \(c_+\) and \(c_-\), respectively.

The \emph{adjacent-wall Thaddeus master space} is the quotient stack
\begin{equation}\label{eq:master-space-definition}
\mathfrak X_\sigma
:=
\left[
\mathbb P_Z(\mathcal L_+\oplus\mathcal L_-)^{ss}/G
\right].
\end{equation}
The residual torus \(\Gm\) acts on \(\mathbb P_Z(\mathcal L_+\oplus\mathcal L_-)\) by
\[
t\cdot [\ell_+:\ell_-]=[t\ell_+:t^{-1}\ell_-].
\]
The two open charts \(\ell_+\neq 0\) and \(\ell_-\neq 0\) contain the two adjacent GIT quotients. The remaining \(\Gm\)-fixed loci are called \emph{mixed fixed components}. A mixed fixed component is indexed by a wall-polystable graded object
\[
P_0\oplus G_1\oplus\cdots\oplus G_s,
\]
where \(P_0\) is the unique summand with numerical type \((1,\beta_0,n_0)\), each \(G_j\) is a pure one-dimensional sheaf with numerical type \((0,\gamma_j,m_j)\), and
\[
(1,\beta,n)=(1,\beta_0,n_0)+\sum_{j=1}^s(0,\gamma_j,m_j),
\qquad
\mu_H(\gamma_j,m_j)=\sigma.
\]
The sheaves \(G_j\) are called the \emph{wall-sheaf factors} of the mixed fixed component.
\end{definition}

\begin{definition}[Moving part of the virtual tangent complex]\label{def:moving-part}
Let \(F\subset\mathfrak X_\sigma^{\Gm}\) be a fixed component and let \(T^{\vir}_{\mathfrak X_\sigma}|_F\) be the restriction of the \(\Gm\)-equivariant virtual tangent complex. By the \emph{moving part}, we mean the nonzero-weight summand of this restricted equivariant virtual tangent complex. The weight-zero summand is the virtual tangent complex of the fixed component, and the moving part is the virtual normal complex used in the Graber--Pandharipande localization formula \cite{GP}.
\end{definition}

\begin{theorem}[Fixed-class finite-wall recursion]\label{thm:recursion}
Fix a class $(1,\beta,n)$ with $\beta\neq 0$. There exists a finite wall set
\[
W(\beta,n)\subset \R
\]
with the following properties.
\begin{enumerate}[label=\textup{(\roman*)}]
\item Semistability is constant on every connected component of $\R\setminus W(\beta,n)$.
\item For adjacent chambers separated by a wall $\sigma\in W(\beta,n)$, the wall-crossing is identified with Bojko's fixed reduced-Hilbert-polynomial wall-crossing on the subcategory of one-dimensional sheaves whose reduced Hilbert polynomial is
\[
p_\sigma(t)=t+\sigma.
\]
\item For endpoint values $c_-<c_+$ with $c_-$ below the leftmost wall and $c_+$ above the rightmost wall, the pair-chamber class satisfies the Joyce Lie-algebra identity
\begin{equation}\label{eq:wall-crossing-recursion}
\begin{aligned}
[P^{\mathrm{pair}}_{S,N}(\beta,n)]^{\mathrm{inv}}
=&
\sum_{
\substack{r\ge 0,\; \beta_0,\gamma_1,\dots,\gamma_r,\; n_0,m_1,\dots,m_r\\
\beta=\beta_0+\sum_{i=1}^r\gamma_i,\; n=n_0+\sum_{i=1}^r m_i\\
(1,\beta_0,n_0)\text{ and }(0,\gamma_i,m_i)\text{ are permissible in the sense of Definition~\ref{def:permissible-classes}}}}
\widetilde U(\boldsymbol\nu;c_-,c_+)\\
&\quad\cdot
\operatorname{ad}_{[M^{ss}_{(\gamma_r,m_r)}]^{\mathrm{inv}}}
\cdots
\operatorname{ad}_{[M^{ss}_{(\gamma_1,m_1)}]^{\mathrm{inv}}}
\bigl([P^{\mathrm{pair}}_{S,N}(\beta_0,n_0)]^{\mathrm{inv}}\bigr),
\end{aligned}
\end{equation}
where
\[
\boldsymbol\nu=((1,\beta_0,n_0),(0,\gamma_1,m_1),\ldots,(0,\gamma_r,m_r))
\]
and $\operatorname{ad}_x(y)=[x,y]_{\mathrm J}$. The sum is finite; a summand is omitted unless the displayed moduli spaces are nonempty and all sheaf factors have wall slopes in the interval $[c_-,c_+]$. The term with $\beta_0=0$ is allowed only with the convention of Definition~\ref{def:degree-zero-pair-factor}; in particular, it contributes only when $n_0=0$. Moreover, either $\beta_0=\beta$ and all $\gamma_i=0$, or $H\cdot\beta_0<H\cdot\beta$.
\end{enumerate}
\end{theorem}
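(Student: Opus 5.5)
We prove Theorem~\ref{thm:recursion} by taking the wall set to be \(W(\beta,n):=\mathcal W_{\beta,n}\). We then verify (i)--(iii) in order, with (iii) obtained by composing the adjacent-wall formulas from (ii).

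For (i), Lemma~\ref{lem:finitewalls} gives finiteness. The argument of Proposition~\ref{prop:lin} reduces \(\mprime_{H,c}\)-semistability of an object of class \((1,\beta,n)\) to purity of \(F\) together with the inequalities \(n'/h'\le c\le n''/(H\cdot\beta'')\) over subobjects. By boundedness (the common regularity bound of Definition~\ref{def:common-lin-parameter-scheme}), only finitely many classes \((d',\beta',n')\) of subobjects and quotients occur. Hence the truth value of each inequality, and so semistability, can change only when \(c\) passes one of the finitely many slopes \(m/(H\cdot\gamma)\). This gives constancy on connected components of \(\R\setminus W(\beta,n)\).

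For (ii), fix \(\sigma\) and adjacent values \(c_-<\sigma<c_+\). An object that is semistable for \(c_\pm\) but not for \(c_\mp\) has a destabilizing sequence whose \(d=0\) factors have slope exactly \(\sigma\). Indeed, any other slope would already destabilize on both sides, because no other wall lies in \([c_-,c_+]\). Therefore the relevant Harder--Narasimhan filtrations at \(\sigma\) have one pair factor \(P_0\) and sheaf factors in the abelian subcategory of pure one-dimensional sheaves with reduced Hilbert polynomial \(t+\sigma\). This is precisely the setting of Bojko's fixed-slope Pairs wall-crossing \cite{Bojko}. Bojko verifies Joyce's hypotheses there: the existence of the obstruction theories, properness of the semistable loci through Lin's GIT (Proposition~\ref{prop:lin}), and the master-space construction. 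Using the master space \(\mathfrak X_\sigma\) of Definition~\ref{def:thaddeus-master-space} and Graber--Pandharipande localization on its \(\Gm\)-action, the mixed fixed components indexed by \(P_0\oplus G_1\oplus\cdots\oplus G_s\) contribute Joyce brackets \(\operatorname{ad}_{[M^{ss}]^{\mathrm{inv}}}\). We then invoke Joyce's theorem \cite[Thm.~5.8]{JoyceWC} to write the single-wall difference with the coefficients \(\widetilde U(\boldsymbol\nu;c_-,c_+)\).

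For (iii), we start from \(c_-\) below the leftmost wall. Proposition~\ref{prop:endpoint}(i) shows that no class \((1,\beta',n')\) with \(\beta'\neq0\) has semistable objects there. The only surviving pair class at that endpoint is the source-only object \((\C,0,0)\), which accounts for the convention of Definition~\ref{def:degree-zero-pair-factor}. We cross the walls one at a time, applying (ii) at each step. The composite of the successive wall-crossing formulas is again of Joyce's form: Joyce's coefficients satisfy the composition law \cite[\S4]{JoyceWC}, so the iterated identity collapses to the single expression \eqref{eq:wall-crossing-recursion} with coefficients \(\widetilde U(\boldsymbol\nu;c_-,c_+)\). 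At \(c_+\), Proposition~\ref{prop:endpoint}(ii) identifies the left-hand side with \([P^{\mathrm{pair}}_{S,N}(\beta,n)]^{\mathrm{inv}}\). The sum is finite because only finitely many classes have nonempty moduli, again by boundedness. For the final assertion, each nonzero \(\gamma_i\) is effective, so \(H\cdot\gamma_i>0\) and hence \(H\cdot\beta_0<H\cdot\beta\) unless every \(\gamma_i=0\). Permissibility excludes \(\gamma_i=0\) in a sheaf factor of finite wall slope.

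The main obstacle is the step in (ii) where we identify the adjacent-wall situation with Bojko's setting for \(N>1\). Bojko and Joyce treat a line-bundle source, so one must check that the fixed source \(\OO_S^{\oplus N}\) still yields a perfect obstruction theory on the master space. In particular, \(\Ext^2\) of the pair complex must vanish or be controlled uniformly, and this is where \(p_g=0\) is likely needed. One must also check that \(\chi_{\mathcal B}\) has the properties required for Joyce's vertex-algebra Lie bracket. We would first try to verify Joyce's assumptions directly via Lin's parameter scheme \(Z\), replacing the rank-one source by \(E_N\) throughout.
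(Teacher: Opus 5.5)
Your outline follows the paper's proof step for step: finite walls via Lemma~\ref{lem:finitewalls}; adjacent walls via Lin's GIT, the master space $\mathfrak X_\sigma$ and Graber--Pandharipande localization, matched with Bojko's fixed-$p_\sigma$ wall-crossing; composition by Joyce's transitivity; endpoints via Proposition~\ref{prop:endpoint}; triangularity. However, the step you leave open as the ``main obstacle'' is a genuine gap, and your diagnosis of it is off. The paper closes it directly. The deformation theory with the source held fixed is $R\pi_*\RHom(I^\bullet,F)$. Applying $\Hom(-,F)$ to the triangle $F[-1]\to I^\bullet\to\OO_S^{\oplus N}$ gives exact sequences $H^i(S,F)^{\oplus N}\to\Ext^i_S(I^\bullet,F)\to\Ext^{i+1}_S(F,F)$. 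For $i\ge2$ both outer terms vanish, because $F$ is one-dimensional and $S$ is a surface. Hence $\Ext^{\ge2}_S(I^\bullet,F)=0$ for every $N$, with no hypothesis on $p_g$. The rank of the source enters only through $H^*(S,F)^{\oplus N}$, so nothing specific to a line-bundle source is used, and $p_g=0$ plays no role in this step. The paper then pulls this amplitude-$[-1,0]$ theory back to $\mathfrak X_\sigma$, which is built on Lin's common parameter scheme $Z$ with the two linearizations $\mathcal L_\pm$. On a mixed fixed component it computes the moving part as $Rp_*\RHom(I_0^\bullet,\mathcal F_{\mathrm{wall}})\otimes\mathfrak t\oplus Rp_*\RHom(\mathcal F_{\mathrm{wall}},F_0)\otimes\mathfrak t^{-1}$ plus the character of the $\mathbb P^1$-direction. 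This class is what gets matched with Bojko's computation. Without this argument, your (ii) rests on a citation whose applicability you yourself question.

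Second, in (iii) you observe that at $c_-$ the only surviving $d=1$ class is $(\C,0,0)$. Then the composite Joyce formula from $c_-$ to $c_+$ has $[(\C,0,0)]$ as its only innermost pair factor. It does not produce the large-$c$ classes $[P^{\mathrm{pair}}_{S,N}(\beta_0,n_0)]^{\mathrm{inv}}$ with $H\cdot\beta_0>0$ that appear in \eqref{eq:wall-crossing-recursion}, and which Theorem~\ref{thm:pair-rat} uses inductively. Read with large-$c$ classes innermost, the display even contains the $r=0$ term, with $\widetilde U=1$, which equals the left-hand side. So ``the iterated identity collapses to the single expression'' identifies two different formulas without argument. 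The paper's proof is equally compressed at this point (``the small-$c$ endpoint with the empty chamber''), so you should make this step explicit rather than assume it. If you want lower large-$c$ pair classes on the right, run Joyce's formula from $c_+$ to $c_-$. There the target side vanishes and the innermost factors are $c_+$-classes, which are large-$c$ classes provided $c_+$ also lies above the walls of the relevant sub-classes. Relatedly, Proposition~\ref{prop:endpoint}(i) concerns only the fixed class. To see that a sub-class $(1,\beta_0,n_0)$ with $\beta_0\neq0$ is empty at $c_-$, use that its target $F_0$ is a direct summand of a sheaf underlying an object of class $(1,\beta,n)$. So $\mu_{\max}(F_0)$ lies in the interval $I=[a,b]$, and once $c_-<a$ the maximal destabilizing subsheaf of $F_0$ gives a destabilizing $d=0$ subobject.
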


\begin{proof}
Part \textup{(i)} is Lemma~\ref{lem:finitewalls}. Let $\sigma$ be an adjacent wall, and choose $c_+>\sigma>c_-$ so that $[c_-,c_+]$ contains no other wall. Put $p_\sigma(t)=t+\sigma$. By Proposition~\ref{prop:lin}, the $d=1$ semistable moduli on both sides of the wall are Lin GIT quotients for the same fixed source $\OO_S^{\oplus N}$ and for the two parameters $\delta_{c_\pm}=c_\pm(H\cdot\beta)-n$.

Use Definition~\ref{def:thaddeus-master-space} with the common Lin parameter scheme \(Z\) of Definition~\ref{def:common-lin-parameter-scheme} and the two adjacent linearizations \(\mathcal L_+\) and \(\mathcal L_-\). If \(\lambda:\Gm\to G\) is a one-parameter subgroup of the GIT group acting on \(Z\), the Hilbert--Mumford weight of \(\lambda\) with respect to the Lin linearization at parameter \(c\) is an affine-linear function of \(c\). Equality of weights for the two adjacent sides occurs exactly when the associated destabilizing quotient or subobject has numerical type \((0,\gamma,m)\) with
\[
\mu_H(\gamma,m)=\sigma.
\]
Equivalently, that one-dimensional sheaf has reduced Hilbert polynomial \(p_\sigma(t)=t+\sigma\). Thus the wall-polystable graded objects are precisely the objects described in Definition~\ref{def:thaddeus-master-space}. The fixed-source category for the wall-sheaf factors is Bojko's fixed-reduced-Hilbert-polynomial Pairs category with source \(\OO_S^{\oplus N}\) and reduced polynomial \(p_\sigma\); see \cite[Appendix~A]{Bojko}.

The residual \(\Gm\)-action on \(\mathfrak X_\sigma\) is the action in Definition~\ref{def:thaddeus-master-space}. Its fixed locus consists of the two endpoint chambers and the mixed fixed components indexed by wall-polystable graded objects
\[
P_0\oplus G_1\oplus\cdots\oplus G_s,
\]
where \(P_0\) has numerical type \((1,\beta_0,n_0)\), the wall-sheaf factors \(G_j\) have numerical types \((0,\gamma_j,m_j)\), and
\[
(1,\beta,n)=(1,\beta_0,n_0)+\sum_{j=1}^s(0,\gamma_j,m_j),
\qquad
\mu_H(\gamma_j,m_j)=\sigma.
\]

We use the derived moduli stack of perfect complexes on \(S\) in the sense of Lieblich and To\"en--Vaqui\'e \cite{Lieblich,TV}. The fixed-source pair stack is the derived substack whose objects are two-term complexes
\[
I^\bullet=[\OO_S^{\oplus N}\to F],
\]
with \(\OO_S^{\oplus N}\) in degree \(0\) and \(F\) in degree \(1\). On the relevant one-dimensional locus, the tangent-obstruction complex is induced by \(R\pi_*R\mathcal H\!om(I^\bullet,F)\). Since \(F\) is one-dimensional and \(S\) is a surface, one obtains \(\Ext^i_S(I^\bullet,F)=0\) for \(i\ge2\), so the induced obstruction theory is perfect of amplitude \([-1,0]\) on the stable quotient charts. Pulling this obstruction theory to the master space gives the \(\Gm\)-equivariant perfect obstruction theory used for virtual localization.

On a mixed fixed component, let
\[
I_0^\bullet=[\OO_S^{\oplus N}\to F_0]
\]
be the universal complex corresponding to the unique \(d=1\) summand, and let
\[
\mathcal F_{\mathrm{wall}}:=\bigoplus_j \mathcal G_j
\]
be the direct sum of the universal wall-sheaf factors. Let \(\mathfrak t\) be the standard one-dimensional representation of the \(\Gm\)-action in Definition~\ref{def:thaddeus-master-space}. By Definition~\ref{def:moving-part}, the moving part is the nonzero-weight summand of the restriction of the equivariant virtual tangent complex to this fixed component. Its class is
\[
Rp_*R\mathcal H\!om(I_0^\bullet,\mathcal F_{\mathrm{wall}})\otimes \mathfrak t
\oplus
Rp_*R\mathcal H\!om(\mathcal F_{\mathrm{wall}},F_0)\otimes \mathfrak t^{-1}
\oplus \mathcal O\otimes\chi_{\mathrm{ms}},
\]
where \(\chi_{\mathrm{ms}}\) is the one-dimensional character contributed by the projective line in \eqref{eq:master-space-definition}. This moving class agrees with the one appearing in Bojko's fixed-\(p_\sigma\) wall-crossing. Therefore the Graber--Pandharipande localization formula \cite{GP} applied to \(\mathfrak X_\sigma\) gives the adjacent Joyce bracket formula on the fixed-\(p_\sigma\) slice, with the coefficients of Definition~\ref{def:joyce-coefficients}.

Composing the finitely many adjacent identities and using Joyce's transitivity identity for the coefficients $U$ gives the endpoint formula \eqref{eq:wall-crossing-recursion}. Proposition~\ref{prop:endpoint} identifies the large-\(c\) endpoint with the pair chamber and the small-\(c\) endpoint with the empty chamber.

The recursion is triangular with respect to the positive integer $H\cdot\beta$ in the following precise sense. The integer $d=\dim V$ is additive in short exact sequences in the Pairs category. The total class in \eqref{eq:wall-crossing-recursion} has $d=1$, and all summands have $d\ge0$. Thus, in any ordered decomposition of the class, there is exactly one summand with $d=1$. If its curve class is $\beta_0=\beta$, then every $\gamma_i$ is zero. Otherwise $\beta_0$ is a proper effective summand of $\beta$, possibly $\beta_0=0$. In the case $\beta_0=0$, Definition~\ref{def:degree-zero-pair-factor} shows that the only contributing source-only term has $n_0=0$. In all cases, ampleness of $H$ gives $H\cdot\beta_0<H\cdot\beta$.
\end{proof}

\section{Rationality of cohomological descendent generating series in the pair chamber}\label{sec:pair-chamber-rationality}

Fix insertion data \(\tau\) as in Definition~\ref{def:descendent-insertion-data}. If $X$ is one of the moduli spaces or strata considered below, and if $\mathcal Q_X$ denotes the universal quotient on $X\times S$, let
\[
\pi_X:X\times S\to X,
\qquad
\pi_S:X\times S\to S
\]
be the projections, and set
\[
\Xi_X(\tau):=
\prod_{i=1}^{\ell}\ch_{k_i}\!\left(R\pi_{X*}(\mathcal Q_X\otimes \pi_S^*\alpha_i)\right)\,c(T_X^{\vir}).
\]
Thus the pair-chamber coefficient is the explicit integral
\begin{equation}\label{eq:pair-chamber-coeff}
P^{\mathrm{pair}}_{\beta,n}(\tau)
:=
\int_{[P^{\mathrm{pair}}_{S,N}(\beta,n)]^{\vir}}
\Xi_{P^{\mathrm{pair}}_{S,N}(\beta,n)}(\tau).
\end{equation}
Similarly, for a sheaf moduli space $M^{ss}_{(\gamma,m)}$, we write
\begin{equation}\label{eq:sheaf-coeff}
S_{\gamma,m}(\tau)
:=
\int_{[M^{ss}_{(\gamma,m)}]^{\mathrm{inv}}}
\Xi_{M^{ss}_{(\gamma,m)}}(\tau),
\end{equation}
whenever the degree of the insertion matches the virtual dimension. These are the numerical coefficients appearing in the wall-crossing formula~\eqref{eq:wall-crossing-recursion}.

\begin{definition}[Punctual correction schemes and completed local types]\label{def:completed-local-types}
Let \(p\in S\) be a closed point, and set \(R_p:=\widehat\OO_{S,p}\).
The punctual Quot functors below are the ordinary Quot functors restricted to finite-length quotients supported at \(p\); their representability follows from Grothendieck's representability theorem for Quot schemes \cite{GrothendieckFGA}.
\begin{enumerate}[label=\textup{(\alph*)}]
\item In the image-cokernel correction of Definition~\ref{def:correction-strata}\textup{(a)}, fix a pure quotient \(E_N\twoheadrightarrow I\) and its dual \(I^D=\mathcal E xt^1_S(I,\omega_S)\). The local punctual construction at \(p\) is the punctual Quot functor of finite-length quotients
\[
\widehat{I^D}_p\twoheadrightarrow T_p
\]
over the complete local ring \(R_p\).
\item In the torsion correction of Definition~\ref{def:correction-strata}\textup{(b)}, fix a pure quotient sequence
\[
0\to K\to E_N\to I\to0.
\]
The local punctual construction at \(p\) is the punctual Quot functor of finite-length quotients
\[
\widehat K_p\twoheadrightarrow T_p
\]
over \(R_p\).
\end{enumerate}
A \emph{completed local type} is the isomorphism class of the completed \(R_p\)-modules, together with the prescribed quotient length, appearing either in Definition~\ref{def:completed-local-types}\textup{(a)} or in Definition~\ref{def:completed-local-types}\textup{(b)}. Thus completed local types are defined separately for the image-cokernel and torsion constructions displayed in this definition.
\end{definition}

\begin{lemma}[Twist invariance of local correction types]\label{lem:localtwist}
Let $M$ be a line bundle on $S$. Tensoring by $M$ does not change any completed local type in the two punctual constructions of Definition~\ref{def:completed-local-types}.
\end{lemma}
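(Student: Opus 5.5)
The plan is to observe that a line bundle is locally trivial, so on the completion at a point it is just a free module of rank one. Fix a closed point \(p\in S\) and a line bundle \(M\), and choose a trivialization \(\widehat M_p\cong R_p\); such a trivialization exists because \(M_p\) is a free \(\OO_{S,p}\)-module of rank one. For any coherent sheaf \(G\) on \(S\), completion commutes with tensor product:
\[
\widehat{(G\otimes M)}_p\cong \widehat G_p\otimes_{R_p}\widehat M_p\cong \widehat G_p .
\]
This is an \(R_p\)-module isomorphism, determined up to a unit of \(R_p\). Since finite-length quotients supported at \(p\) are unaffected by completion, length is preserved under this isomorphism. It follows that the punctual Quot functors of \(\widehat G_p\) and of \(\widehat{(G\otimes M)}_p\) are isomorphic, quotient length by quotient length.

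It then remains to check that the two specific modules in Definition~\ref{def:completed-local-types} transform by \(\otimes M\) under the twist of Proposition~\ref{prop:twist}.

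In case (b), the twisted quotient sequence is
\[
0\to K\otimes M\to M^{\oplus N}\to I\otimes M\to 0,
\]
so the kernel becomes \(K\otimes M\). The previous paragraph then gives \(\widehat{(K\otimes M)}_p\cong \widehat K_p\).

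In case (a), the pure quotient \(I\) becomes \(I\otimes M\). Its dual is
\[
\mathcal Ext^1_S(I\otimes M,\omega_S)\cong \mathcal Ext^1_S(I,\omega_S)\otimes M^{-1}=I^D\otimes M^{-1},
\]
using local freeness of \(M\) to move it out of the first argument of \(\mathcal Ext\). Completing at \(p\) again gives \(\widehat{I^D}_p\). The quotient lengths are unchanged in both cases, because twisting a zero-dimensional sheaf by \(M\) preserves its length. This holds for the cokernel \(T\) and for the torsion \(T_0(Q)\) alike, since both are supported at points.

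There is one point to record carefully. The twist by \(M\) changes the source from \(E_N\) to \(M^{\oplus N}\), and also shifts \(n\). The completed local type, however, only sees \(R_p\)-modules, and \(\widehat{M^{\oplus N}}_p\cong R_p^{\oplus N}\cong \widehat{E_N}_p\). So the local problems match even though the global source differs.

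The main obstacle is mild: the relative version in families. There one would want the identification to hold uniformly over the strata, using a local trivialization of \(M\) on a Zariski neighbourhood of \(p\). Pointwise, however, which is what the statement asserts, the argument above suffices.
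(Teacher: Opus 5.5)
Your argument is correct and is essentially the paper's proof: choosing a generator of $\widehat M_p$ makes tensoring by a line bundle an isomorphism of the completed $R_p$-modules, hence of the punctual Quot functors, length by length. Your explicit check that the dual in case (a) becomes $I^D\otimes M^{-1}$, and the kernel in case (b) becomes $K\otimes M$, is a useful detail that the paper leaves implicit in its phrase ``locally free rank-one twist''.
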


\begin{proof}
At every closed point $p\in S$, the completed line bundle $\widehat M_p$ is a free $\widehat\OO_{S,p}$-module of rank one. Tensoring by $M$ therefore replaces each completed module in Definition~\ref{def:completed-local-types} by a canonically locally free rank-one twist, which is isomorphic to the original completed module after choosing a generator of $\widehat M_p$. The punctual Quot functors and their finite-length quotient data are therefore unchanged up to isomorphism.
\end{proof}

\begin{proposition}[$d=0$ coefficient polynomiality on arithmetic progressions]\label{prop:sheafpoly}
Fix a nonzero effective class $\gamma\in H_2(S,\Z)$, a residue class $r\in\Z/(H\cdot\gamma)\Z$, and any inserted $d=0$ pure-sheaf coefficient
\[
S_{\gamma,m}(\tau)
\]
for the numerical type $(0,\gamma,m)$
appearing in the wall-crossing formula~\eqref{eq:wall-crossing-recursion}. Then the sequence
\[
k\longmapsto S_{\gamma,r+k(H\cdot\gamma)}(\tau)
\]
is eventually polynomial in $k$.
\end{proposition}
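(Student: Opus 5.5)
The plan is to compare the coefficients along the progression \(m=r+k(H\cdot\gamma)\) by twisting with \(\OO_S(kH)\), and to show that the only \(k\)-dependence is polynomial. By Remark~\ref{rem:twist-stability}, tensoring by \(\OO_S(kH)\) preserves \(H\)-Gieseker semistability and shifts the reduced Hilbert polynomial by \(k\). Applying Proposition~\ref{prop:twist} at the level of sheaves gives
\[
\chi(F\otimes\OO_S(kH))=\chi(F)+k(H\cdot\gamma),
\]
so the twist defines isomorphisms
\[
\Phi_k: M^{ss}_{(\gamma,r)}\xrightarrow{\sim}M^{ss}_{(\gamma,r+k(H\cdot\gamma))}.
\]
These isomorphisms are compatible with Joyce's invariant classes. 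The reason is that \(\Phi_k\) is induced by an autoequivalence of the \(d=0\) subcategory preserving the stability data and the Euler form \(\chi(F,F)\); the latter is unchanged because \(\ch(F(kH))=\ch(F)\ch(\OO(kH))\) and the pairing \(\chi(F,F)\) is twist-invariant. Joyce's construction is functorial under such autoequivalences, so \(\Phi_{k*}[M^{ss}_{(\gamma,r)}]^{\mathrm{inv}}=[M^{ss}_{(\gamma,r+kh)}]^{\mathrm{inv}}\), where \(h=H\cdot\gamma\). The virtual tangent complex \(R\pi_*R\mathcal H\!om(\mathcal F,\mathcal F)_0[1]\) is also unchanged under twisting, so the factor \(c(T^{\vir})\) pulls back to itself.

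The descendent insertions are handled next. The universal sheaf pulls back as \(\Phi_k^*\mathcal F'=\mathcal F\otimes\pi_S^*\OO(kH)\), up to the twist by a line bundle from the base. This base twist is the usual ambiguity of universal sheaves on the rigidified stack, and I would fix it by a normalization compatible with Joyce's classes. Then
\[
\Phi_k^*\ch\bigl(R\pi_*(\mathcal F'\otimes\pi_S^*\alpha_i)\bigr)=\pi_*\bigl(\ch(\mathcal F)\,\pi_S^*(\ch(\alpha_i)\,e^{kH}\td_S)\bigr)
\]
by Grothendieck--Riemann--Roch. Because \(e^{kH}=1+kH+\tfrac{k^2}{2}H^2\) on a surface, each \(\ch_{k_i}\) of the twisted pushforward is a polynomial of degree \(\le2\) in \(k\) with coefficients that are fixed classes on \(M^{ss}_{(\gamma,r)}\). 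The product \(\Xi(\tau)\) is therefore polynomial in \(k\), and integrating against the fixed class \([M^{ss}_{(\gamma,r)}]^{\mathrm{inv}}\) gives a polynomial in \(k\). This holds for all \(k\), which in particular gives eventual polynomiality; the only truncation arises when the degree-matching condition is imposed.

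I expect the main obstacle to be the compatibility of the Joyce invariant class with the twist when strictly semistable sheaves exist. In that case the class lives in \(\check H_*(\mathfrak M^{pl})\) and is not a virtual class on a fine moduli space, so insertions must be interpreted via a universal sheaf on the rigidified stack. Descendent classes of \(\mathcal F\) depend on the normalization of the rigidification, and Joyce handles this with weight-zero conventions. To deal with it, I would define the insertion classes using the normalized universal sheaf of Joyce's pl-construction, which is canonical up to the \(\Gm\) rescaling that acts trivially on weight-zero combinations. I would then check that the twist autoequivalence commutes with Joyce's pair-stack construction of the invariant, which uses an auxiliary pair category \(\OO(-m_0H)\to F\). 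Replacing \(m_0\) by \(m_0+k\) identifies the auxiliary data, so the defining wall-crossing identities are transported term by term. Eventual polynomiality, rather than polynomiality for all \(k\), gives room for choosing \(m_0\) uniformly for large \(k\) if needed.
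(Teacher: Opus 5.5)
Your proposal follows the paper's proof essentially step for step. You identify the semistable moduli along the progression by twisting with $\OO_S(kH)$, use Grothendieck--Riemann--Roch with the factor $e^{kH}$ to see that the descendent insertions are polynomial in $k$, and observe that the virtual tangent complex is unchanged by the twist. Your extra care in two places (transporting Joyce's invariant class under the twist autoequivalence, and choosing the universal-sheaf normalization compatibly in $k$) addresses points the paper passes over quickly, since it simply asserts that the base twist is harmless. This strengthens the argument but does not change it.
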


\begin{proof}
Let $h_\gamma:=H\cdot\gamma$ and write $m_k:=r+k h_\gamma$. Let $\mathfrak M_k$ be the moduli stack of $H$-Gieseker semistable pure one-dimensional sheaves of numerical type $(0,\gamma,m_k)$; this stack carries a universal sheaf
\[
\mathcal F_k\quad\text{on}\quad \mathfrak M_k\times S.
\]
Tensoring by $\OO_S(kH)$ identifies $\mathfrak M_0$ with $\mathfrak M_k$, because Remark~\ref{rem:twist-stability} shows that the reduced Hilbert polynomial is shifted by $k$ and semistability is preserved. On a Quot presentation carrying the universal sheaf, the pullback of $\mathcal F_k$ under this identification is $\mathcal F_0\otimes \pi_S^*\OO_S(kH)$, up to tensoring by a line bundle pulled back from the base stack; this possible base twist does not affect the assertion that the Chern characters below are polynomial functions of $k$.

Let $\pi:\mathfrak M_0\times S\to\mathfrak M_0$ and $\pi_S:\mathfrak M_0\times S\to S$ be the projections. For each insertion class $\alpha_i$, Grothendieck--Riemann--Roch gives
\[
\ch\!\left(R\pi_{*}(\mathcal F_0\otimes \pi_S^*(\OO_S(kH)\otimes\alpha_i))\right)
=
\pi_*\!\left(\ch(\mathcal F_0)\,e^{k\pi_S^*H}\,\pi_S^*\ch(\alpha_i)\,\pi_S^*\td(S)\right),
\]
so every descendent Chern character appearing in \eqref{eq:sheaf-coeff} is a polynomial in $k$. The virtual tangent insertion is unchanged under tensoring by a line bundle, since
\[
R\mathcal H\!om(F\otimes\OO_S(kH),F\otimes\OO_S(kH))
\cong R\mathcal H\!om(F,F).
\]
Therefore $S_{\gamma,m_k}(\tau)$ is eventually a polynomial function of $k$ on the fixed residue class $r\in\Z/h_\gamma\Z$.
\end{proof}

\begin{theorem}[Pair-chamber rationality]\label{thm:pair-rat}
For every effective class $\beta\neq 0$ and every finite choice of insertion data $\tau$, the pair-chamber generating series
\[
Z^{\mathrm{pair}}_{S,N,\beta,\tau}(q)
:=\sum_{n\in\Z} P^{\mathrm{pair}}_{\beta,n}(\tau)\,q^n
\]
is rational.
\end{theorem}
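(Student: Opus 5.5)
We argue by strong induction on the positive integer \(H\cdot\beta\), using the recursion \eqref{eq:wall-crossing-recursion} of Theorem~\ref{thm:recursion} to express each coefficient \(P^{\mathrm{pair}}_{\beta,n}(\tau)\) through sheaf coefficients and pair-chamber coefficients of smaller \(H\)-degree. Rationality then follows from two facts. First, the sequence \(n\mapsto P^{\mathrm{pair}}_{\beta,n}(\tau)\) vanishes for \(n\ll 0\). Second, on each residue class \(n\equiv r \pmod{H\cdot\beta}\) it is eventually a quasi-polynomial in \(k\), where \(n=r+k(H\cdot\beta)\). A sequence with these two properties has a rational generating function, with denominator a power of \(1-q^{H\cdot\beta}\).

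\textbf{Lower vanishing.} The target \(F\) is a pure one-dimensional quotient-up-to-points of \(\OO_S^{\oplus N}\). By boundedness (Lin's construction), \(\chi(F)\) is bounded below for fixed \(\beta\). So \(P^{\mathrm{pair}}_{S,N}(\beta,n)=\emptyset\) for \(n\ll0\), and the series is a Laurent series.

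\textbf{Twist periodicity.} First pass the insertions through the recursion. Integrating \(\Xi(\tau)\) against a Joyce bracket of invariant classes gives, by the vertex-algebra description of \([-,-]_{\mathrm J}\), a finite universal expression. This expression is built from integrals over the factors of products of descendent classes, Chern classes of \(T^{\vir}\), and Chern classes of the mixed Ext complexes \(R\pi_*R\mathcal H\!om(I_0^\bullet,\mathcal G)\) and \(R\pi_*R\mathcal H\!om(\mathcal G,F_0)\). Twist simultaneously all factors by \(\OO_S(kH)\), using Proposition~\ref{prop:twist} and Remark~\ref{rem:twist-stability}. This sends the type \((\gamma_i,m_i)\) to \((\gamma_i,m_i+k H\cdot\gamma_i)\) and \((\beta_0,n_0)\) to \((\beta_0,n_0+kH\cdot\beta_0)\). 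The wall slopes \(\sigma\) shift by exactly \(k\), so the Joyce coefficients \(\widetilde U\) are unchanged. On the sheaf side, the integrals are eventually polynomial in \(k\) by Proposition~\ref{prop:sheafpoly}. The mixed Ext complexes are twist-invariant, since \(R\mathcal H\!om(A(kH),B(kH))\cong R\mathcal H\!om(A,B)\). The pair factor \(I_0^\bullet\) has a source that becomes \(\OO_S(kH)^{\oplus N}\), and here lies the subtlety. The twisted pair space is a pair space for the source \(\OO_S(kH)^{\oplus N}\), not \(\OO_S^{\oplus N}\). So on the pair side we do not use periodicity of the moduli space itself. Instead we feed in the induction hypothesis: for \(H\cdot\beta_0<H\cdot\beta\), the sequence \(n_0\mapsto P^{\mathrm{pair}}_{\beta_0,n_0}(\cdot)\) has a rational generating series, that is, it is eventually quasi-polynomial. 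The \(\beta_0=0\) term is the single class \((\C,0,0)\) by Definition~\ref{def:degree-zero-pair-factor}.

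\textbf{Assembling the coefficients.} For fixed \(\beta\) and a fixed large \(n\) in a residue class, the recursion has finitely many decompositions \(\beta=\beta_0+\sum\gamma_i\). For each such decomposition, the constraints \(\mu_H(\gamma_i,m_i)=\sigma\in\mathcal W_{\beta,n}\) are linear in \(n\). Hence the admissible tuples \((n_0,m_i)\) range over lattice points of polytopes whose dimensions grow linearly in \(k\). Summing quasi-polynomials over the lattice points of such dilating polytopes again gives a quasi-polynomial, by an Ehrhart/Brion-type argument. The term with \(\beta_0=\beta\) is the left-hand side itself, so the recursion genuinely expresses \(P^{\mathrm{pair}}_{\beta,n}\) in terms of lower data.

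\textbf{Main obstacle.} The hard step is controlling how the wall set \(\mathcal W_{\beta,n}\) and the ranges of the admissible \(m_i\) vary with \(n\). I would first show they are cut out by linear inequalities in \(n\) with coefficients independent of \(n\), using the endpoint interval \(I\) from Proposition~\ref{prop:endpoint} and the bound \(n_0\ge\) const. This yields uniform quasi-polynomiality in \(k\). A second point to check is that the descendent and \(c(T^{\vir})\) insertions on the pair space factor through the bracket as a finite sum of product integrals; this is what makes the induction apply.
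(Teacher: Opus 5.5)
Your outline shares the paper's skeleton: strong induction on $H\cdot\beta$ through Theorem~\ref{thm:recursion}, twist periodicity of the $d=0$ factors via Proposition~\ref{prop:sheafpoly}, the induction hypothesis for the lower pair factors, and quasi-polynomiality on residue classes implying rationality. It departs from the paper at the assembly step. The paper asserts that on a residue class $n=r+kh_\beta$ the recursion has a $k$-independent finite shape. Each coefficient is then a fixed finite combination of sheaf and lower pair coefficients sampled along arithmetic progressions. You instead observe two things: twisting by $\OO_S(kH)$ does not preserve pairs with source $\OO_S^{\oplus N}$, and the set of contributing decompositions grows with $n$. You therefore replace the fixed shape by a weighted Ehrhart summation over a one-parameter family of polytopes.

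Yours is the more robust route. The walls of $(1,\beta,n)$ run from about $n/h_\beta$ up to about $n-\mathrm{const}$: a subsheaf of class $\gamma$ with $H\cdot\gamma=1$ on one component of a reducible support curve can carry almost all of $\chi$. So for $h_\beta\ge 2$, the terms $(1,\beta_0,n_0)+(0,\gamma,m)$ with $n_0$ near its lower bound are not twists of terms for smaller $n$. The paper's route buys brevity; yours pays for robustness with the polyhedral description you call the main obstacle. Your plan for it works:
\begin{itemize}
\item $c_-<n/h_\beta$ suffices by Proposition~\ref{prop:lin}.
\item $c_+$ can be chosen affine-linear in $n$. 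The largest wall is the slope of a subsheaf $F'$ of a large-$c$ target $F$, and $\chi(F/F')$ is bounded below, since $F/F'$ contains a one-dimensional quotient of $\OO_S^{\oplus N}$ with zero-dimensional cokernel.
\item $\widetilde U$ depends only on comparisons of averaged slopes with each other and with $c_\pm$. It is therefore constant on regions cut out by finitely many hyperplanes that move affinely with $n$.
\end{itemize}

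Some repairs are needed.

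First, the claim that the lower series ``has a rational generating series, that is, it is eventually quasi-polynomial'' is false; take $1/(1-2q)$. Your summation needs quasi-polynomiality of the lower pair coefficients, not rationality. Run the induction on ``vanishes for $n\ll 0$ and is eventually quasi-polynomial in $n$'', which is what your inductive step actually produces; the paper implicitly does the same. Expect the denominator to be a power of $1-q^{L}$ for some multiple $L$ of $H\cdot\beta$, not of $1-q^{H\cdot\beta}$.

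Second, the lower bound on $n$ comes from $\chi(F)\ge\chi(\im\rho)$ and boundedness of one-dimensional quotients of $\OO_S^{\oplus N}$. It does not come from Lin's construction, which works at fixed $n$.

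Third, your remark that the $\beta_0=\beta$ term ``is the left-hand side itself, so the recursion genuinely expresses $P^{\mathrm{pair}}_{\beta,n}$ in terms of lower data'' is backwards. In Joyce's formula that term is the $c_-$ class, which vanishes by Proposition~\ref{prop:endpoint}\textup{(i)}. If it were the left-hand side, the identity would carry no information.

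Fourth, once $(n_0,m_1,\dots,m_r)$ vary independently, simultaneous twist invariance of the mixed Ext complexes is not the relevant fact. Grothendieck--Riemann--Roch shows their Chern characters depend polynomially on the relative twists, and that is what makes the summand quasi-polynomial in the whole tuple.

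Finally, pushing $\Xi(\tau)$ through the bracket and K\"unneth-decomposing the mixed terms does give finite sums of product integrals. However, the insertions landing on the pair factor are not of the form $\tau$. They include total Chern classes of tautological complexes, and, when $b_1(S)\neq 0$, descendents of odd cohomology classes, which do not come from $K^0(S)$. The induction must therefore run over a class of insertions closed under this operation, with Proposition~\ref{prop:sheafpoly} extended accordingly. The paper's proof is silent on this point as well.
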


\begin{proof}
We argue by induction on the positive integer $h_\beta:=H\cdot\beta$. Fix a residue class \(r\in\Z/h_\beta\Z\). We consider the sequence of coefficients
\begin{equation}\label{eq:pair-residue-sequence}
\left\{
P^{\mathrm{pair}}_{\beta,r+k h_\beta}(\tau)
\right\}_{k\ge 0}.
\end{equation}
By Theorem~\ref{thm:recursion} and the periodicity supplied by tensoring with $\OO_S(H)$, after translating the chamber endpoints by $k$ the combinatorial wall-crossing coefficients are independent of $k$. Thus, for fixed residue class, the recursion has $k$-independent finite shape.

Every nontrivial term in the recursion is of one of two kinds. Either it contains only $d=0$ factors together with the source-only object of Definition~\ref{def:degree-zero-pair-factor}, or it contains one pair factor of curve class $\beta_0$ with $H\cdot\beta_0<h_\beta$. By Proposition~\ref{prop:sheafpoly}, the $d=0$ pure-sheaf coefficients are polynomial in $k$. By the induction hypothesis, all lower pair factors are eventually polynomial in $k$. Since the recursion is finite and its coefficients are independent of $k$, the expression for $P^{\mathrm{pair}}_{\beta,r+k h_\beta}(\tau)$ is eventually polynomial in $k$ on every residue class.

A generating series whose coefficients are eventually polynomial on each residue class modulo $h_\beta$ is rational. Summing the rational series for the residue classes gives the conclusion.
\end{proof}

\section{Pair-to-Quot comparison through the pure Quot locus}\label{sec:pair-to-quot}

This section gives the two locally-closed stratifications over the pure Quot bases
\[
B_k=\Quot^{\mathrm{pure}}_{S,N}(\beta,k)
\]
of Definition~\ref{def:pure-quot-locus}. These stratifications compare the pair chamber, the pure Quot locus, and the full Quot scheme. The first comparison records the zero-dimensional cokernel of the image of a pair; the second records the zero-dimensional torsion in an arbitrary Quot target.

\subsection{The image-cokernel correction from pairs to the pure Quot locus}

\begin{definition}[Relative dual of a pure one-dimensional family]\label{def:relative-dual}
Let $B$ be a scheme and let $\mathcal I$ be a $B$-flat family of pure one-dimensional sheaves on $S$, regarded as a sheaf on $S\times B$. Let
\[
p:S\times B\longrightarrow B
\]
be the projection. Define
\[
\mathcal I^D:=\mathcal E xt^1_{S\times B/B}(\mathcal I,\omega_p),
\qquad \omega_p:=\pr_S^*\omega_S.
\]
For flat families of pure one-dimensional sheaves on the smooth surface $S$, the formation of $\mathcal I^D$ commutes with base change after the flattening stratifications used below. Fibrewise, it is $I^D=\mathcal E xt^1_S(I,\omega_S)$.
\end{definition}

\begin{lemma}[Duality for zero-dimensional thickenings]\label{lem:dual-thickening}
Let
\[
0\longrightarrow I\longrightarrow F\longrightarrow T\longrightarrow 0
\]
be an exact sequence on $S$, where $I$ and $F$ are pure one-dimensional and $T$ is zero-dimensional. Then applying $\mathcal H om(-,\omega_S)$ yields an exact sequence
\[
0\longrightarrow F^D\longrightarrow I^D\longrightarrow T^{\vee_S}\longrightarrow 0,
\qquad T^{\vee_S}:=\mathcal E xt^2(T,\omega_S).
\]
Conversely, every zero-dimensional quotient of $I^D$ arises uniquely in this way.
\end{lemma}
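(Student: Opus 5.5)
We apply the local duality functor $\mathcal H om(-,\omega_S)$ derived, i.e.\ $R\mathcal H om_S(-,\omega_S)$, to the short exact sequence and read off the long exact sequence of $\mathcal E xt$ sheaves. Everything rests on one standard fact for a smooth surface: a coherent sheaf $G$ of pure dimension $e$ has $\mathcal E xt^i_S(G,\omega_S)=0$ for $i\neq 2-e$, and its dual $G^D=\mathcal E xt^{2-e}_S(G,\omega_S)$ is again pure of dimension $e$, with $G^{DD}\cong G$. This is Auslander--Buchsbaum plus purity: a pure one-dimensional sheaf on a smooth surface has depth one at every point of its support, hence projective dimension one. In particular $\mathcal E xt^0(I,\omega_S)=\mathcal E xt^2(I,\omega_S)=0$, the same holds for $F$, and $\mathcal E xt^i(T,\omega_S)=0$ for $i\neq 2$.

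\textbf{Forward direction.} The long exact sequence for $0\to I\to F\to T\to 0$ reads
\[
\mathcal E xt^1(T,\omega_S)\to\mathcal E xt^1(F,\omega_S)\to\mathcal E xt^1(I,\omega_S)\to\mathcal E xt^2(T,\omega_S)\to\mathcal E xt^2(F,\omega_S).
\]
The outer terms vanish, so this gives $0\to F^D\to I^D\to T^{\vee_S}\to 0$. Here $T^{\vee_S}$ is zero-dimensional of the same length as $T$, by local duality over the regular local rings $\OO_{S,p}$.

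\textbf{Converse.} Start from a zero-dimensional quotient $I^D\twoheadrightarrow T'$ with kernel $G$. Since $I^D$ is pure one-dimensional, $G$ is also pure one-dimensional. Dualizing $0\to G\to I^D\to T'\to 0$ by the argument above gives
\[
0\to (I^D)^D\to G^D\to T'^{\vee_S}\to 0 .
\]
Put $F:=G^D$ and $T:=T'^{\vee_S}$, and use $(I^D)^D\cong I$. This yields an extension $0\to I\to F\to T\to 0$ with $F$ pure. Applying the forward direction to it returns the original quotient, because $F^D=G^{DD}\cong G$ and $T^{\vee_S\vee_S}\cong T'$, and these identifications are compatible with the maps by naturality of the biduality isomorphism $\mathrm{id}\to D\circ D$.

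\textbf{Uniqueness.} Suppose two thickenings $I\subset F_1$ and $I\subset F_2$ give the same subsheaf $F_1^D=F_2^D\subset I^D$. Applying $D$ and biduality gives an isomorphism $F_1\cong F_2$ under $I$. Hence the correspondence is a bijection between pure thickenings $I\subset F$ with zero-dimensional cokernel, taken up to isomorphism fixing $I$, and zero-dimensional quotients of $I^D$.

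\textbf{Main obstacle.} The step needing the most care is the biduality $G^{DD}\cong G$ for pure one-dimensional sheaves, together with its naturality, since this is what makes the correspondence inverse and not merely a map. I would prove it locally: take a length-one locally free resolution $0\to L_1\to L_0\to G\to 0$, which exists by the projective-dimension-one statement. Dualizing gives $0\to L_0^\vee\to L_1^\vee\to G^D\to 0$, which is again a locally free resolution. Dualizing once more recovers the original complex, and so $G^{DD}\cong G$ canonically.
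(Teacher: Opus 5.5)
Your proposal is correct and follows the paper's route. Like the paper, you take the long exact sequence of local $\mathcal E xt(-,\omega_S)$ sheaves, using $\mathcal H om(E,\omega_S)=\mathcal E xt^2(E,\omega_S)=0$ for pure one-dimensional $E$ and $\mathcal H om(T,\omega_S)=\mathcal E xt^1(T,\omega_S)=0$ for zero-dimensional $T$, and you get the converse by dualizing again; your biduality argument via a length-one locally free resolution makes explicit the involutivity that the paper only asserts.

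One minor caveat: your blanket ``standard fact'' is false for $e=2$. For instance, the ideal sheaf of a point has nonzero $\mathcal E xt^1(-,\omega_S)$ and is not reflexive. You only invoke the fact for $e\in\{0,1\}$, where your Auslander--Buchsbaum justification is valid, so the argument is unaffected.
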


\begin{proof}
For a pure one-dimensional sheaf $E$ on a smooth surface,
\[
\mathcal H om(E,\omega_S)=0,
\qquad
\mathcal E xt^2(E,\omega_S)=0.
\]
For a zero-dimensional sheaf $T$,
\[
\mathcal H om(T,\omega_S)=0,
\qquad
\mathcal E xt^1(T,\omega_S)=0.
\]
Applying $\mathcal H om(-,\omega_S)$ to the short exact sequence therefore reduces the long exact sequence in local Ext to the one displayed. Applying the same argument again shows that the construction is involutive.
\end{proof}

\begin{theorem}[Image-cokernel stratum as a relative curve Quot scheme]\label{thm:first-corr}
For $k\in\Z$ set
\[
B_k:=
\Quot^{\mathrm{pure}}_{S,N}(\beta,k)
\]
and let
\[
0\longrightarrow \mathcal K_k\longrightarrow \OO_{S\times B_k}^{\oplus N}\longrightarrow \mathcal I_k\longrightarrow 0
\]
be the universal pure quotient. For $m\ge 0$, let
\[
P^{\mathrm{img}}_{n,m}\subset P^{\mathrm{pair}}_{S,N}(\beta,n)
\]
be the locally-closed stratum of pairs $\rho:\OO_S^{\oplus N}\to F$ for which the image sheaf $I=\im(\rho)$ is pure of class $(\beta,n-m)$ and $\length(F/I)=m$. Then the morphism
\[
\pi^{\mathrm{img}}_{n,m}:P^{\mathrm{img}}_{n,m}\longrightarrow B_{n-m},
\qquad
(\rho:\OO_S^{\oplus N}\to F)\longmapsto (\OO_S^{\oplus N}\twoheadrightarrow I)
\]
is represented by the relative Quot scheme
\[
P^{\mathrm{img}}_{n,m}\cong \Quot_{\mathcal I_{n-m}^D/B_{n-m}}(m)
\]
parametrizing zero-dimensional quotients of relative length $m$ of $\mathcal I_{n-m}^D$.
Consequently the pair chamber is stratified by the locally-closed subschemes $P^{\mathrm{img}}_{n,m}$.
\end{theorem}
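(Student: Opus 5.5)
We construct mutually inverse natural transformations between the functor of points of \(P^{\mathrm{img}}_{n,m}\), viewed over \(B_{n-m}\) via \(\pi^{\mathrm{img}}_{n,m}\), and the functor \(\Quot_{\mathcal I_{n-m}^D/B_{n-m}}(m)\). Lemma~\ref{lem:dual-thickening} supplies the bridge on points, and we then globalize it over an arbitrary base \(T\to B_{n-m}\).

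\textbf{Step 1 (the morphism to \(B_{n-m}\)).} On a point \(\rho:E_N\to F\) of the stratum, Definition~\ref{def:correction-strata}(a) shows that \(I=\im\rho\) is a pure one-dimensional quotient of \(E_N\). Since \(\chi(F)=n\) and \(\length(F/I)=m\), we get \(\chi(I)=n-m\) and \(c_1(I)=\beta\). For a family over \(T\) lying in the stratum with its reduced structure, or in the scheme structure given by flattening the image, the image \(\mathcal I_T\) has constant Hilbert polynomial and is therefore \(T\)-flat. Likewise \(\mathcal F_T/\mathcal I_T\) is \(T\)-flat of relative length \(m\). This gives \(\pi^{\mathrm{img}}_{n,m}\).

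\textbf{Step 2 (the fibrewise bijection).} We apply Lemma~\ref{lem:dual-thickening} to \(0\to I\to F\to T\to 0\). This yields a zero-dimensional quotient \(I^D\twoheadrightarrow T^{\vee_S}\) with kernel \(F^D\), and \(\length T^{\vee_S}=m\). Conversely, let \(I^D\twoheadrightarrow T'\) be a zero-dimensional quotient with kernel \(F'\). Then \(F:=(F')^D\) is pure, since the dual of a pure one-dimensional sheaf is pure and \(D\circ D=\id\) on pure sheaves. Dualizing \(0\to F'\to I^D\to T'\to0\) gives \(0\to I\to F\to T'^{\vee_S}\to0\). Composing with \(E_N\twoheadrightarrow I\) produces a pair \(\rho\) whose image is exactly \(I\) and whose cokernel is zero-dimensional. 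By Proposition~\ref{prop:endpoint}(ii) or Definition~\ref{def:large-c-pair-chamber}, this pair lies in \(P^{\mathrm{pair}}_{S,N}(\beta,n)\). The two constructions are mutually inverse because \(\mathcal Ext^2(-,\omega_S)\) is an involution on finite-length sheaves.

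\textbf{Step 3 (families and representability).} Over a base \(T\) we use the relative dual of Definition~\ref{def:relative-dual}. The key input is that, for \(T\)-flat families of pure one-dimensional sheaves, \(\mathcal Ext^1_{p}(\mathcal I,\omega_p)\) is again \(T\)-flat and commutes with base change. We argue this via the local-to-global spectral sequence and the vanishing of \(\mathcal Ext^0\) and \(\mathcal Ext^2\) on fibres, using cohomology and base change for a sheaf whose relative Ext is concentrated in a single degree. For the same reason, \(\mathcal Ext^2_p\) of a \(T\)-flat relative length-\(m\) sheaf is flat of length \(m\) and commutes with base change. Given these facts, the relative long exact sequence of the family gives both transformations. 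The pair is \(\mathcal I_T\hookrightarrow\mathcal F_T\), and the quotient is \(\mathcal I_T^D\twoheadrightarrow\mathcal Ext^2_p(\mathcal F_T/\mathcal I_T,\omega_p)\). Because Grothendieck's relative Quot scheme represents the target functor, we obtain \(P^{\mathrm{img}}_{n,m}\cong\Quot_{\mathcal I^D_{n-m}/B_{n-m}}(m)\).

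\textbf{Step 4 (the stratification).} The function \(\rho\mapsto \length(F/\im\rho)\) is upper semicontinuous in flat families, equivalently \(\chi(\im\rho)\) jumps only on closed sets. Since every point of the pair chamber lies in exactly one \(P^{\mathrm{img}}_{n,m}\) with \(0\le m\), this gives the locally-closed decomposition.

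\textbf{Main obstacle.} We expect the hard step to be Step 3: flatness and base change for the relative duals, and the fact that the image of a family of pairs is flat exactly on the stratum. We would first establish that the image is flat after passing to the flattening stratification, which matches the reduced-structure convention of Remark~\ref{rem:strata-convention}. We would then check the Ext base-change on fibres over Artinian bases.
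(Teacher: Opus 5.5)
Your proposal is essentially the paper's proof. You identify $T$-points of $P^{\mathrm{img}}_{n,m}$ over $u\colon T\to B_{n-m}$ with $T$-flat extensions of $u^*\mathcal I_{n-m}$ by a flat zero-dimensional sheaf of relative length $m$, convert these into length-$m$ quotients of $u^*\mathcal I^D_{n-m}$ via Lemma~\ref{lem:dual-thickening} and base change for the relative dual, and conclude by representability of relative Quot. Your Steps 1, 3 and 4 supply the flatness, base-change and semicontinuity details that the paper leaves implicit.

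One correction concerns your closing remark about the scheme structure. The functor you identify, which is also the one the paper's proof tacitly uses, consists of families with $\coker\rho_T$ flat of relative length $m$; it is represented by the flattening stratum of $\coker\rho$, and flatness there holds by definition, not because Hilbert polynomials are constant (that suffices only over reduced bases). This is not the reduced induced structure of Remark~\ref{rem:strata-convention}: for $m=0$ the stratum is the open subscheme $B_n$ of the pair chamber, whereas the reduced structure would only give $(B_n)_{\mathrm{red}}$. So the isomorphism must be read with the flattening structure.
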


\begin{proof}
Let $T$ be a $B_{n-m}$-scheme with structure morphism $u:T\to B_{n-m}$. Pull back the universal pure quotient to obtain
\[u^*\mathcal I_{n-m}\quad\text{on }S\times T.
\]
A $T$-point of $P^{\mathrm{img}}_{n,m}$ over $u$ is exactly a $T$-flat family of extensions
\[
0\longrightarrow u^*\mathcal I_{n-m}\longrightarrow \mathcal F\longrightarrow \mathcal T\longrightarrow 0
\]
where $\mathcal T$ is $T$-flat, zero-dimensional on the fibres, and of relative length $m$. By Lemma~\ref{lem:dual-thickening}, and by the base-change property in the definition of the relative dual, such extensions are equivalent functorially in $T$ to quotients
\[u^*(\mathcal I_{n-m}^D)\twoheadrightarrow \mathcal T^{\vee_S}
\]
of relative length $m$. This is precisely the relative Quot functor of $\mathcal I_{n-m}^D$ over $B_{n-m}$. Since relative Quot is representable, the displayed isomorphism follows.
\end{proof}

\subsection{The torsion correction from the pure Quot locus to the full Quot scheme}

\begin{theorem}[Torsion stratum as a relative punctual Quot scheme]\label{thm:second-corr}
For $m\ge 0$, let
\[
Q^{\mathrm{tor}}_{n,m}\subset \Quot_{S,N}(\beta,n)
\]
be the locally-closed stratum of quotients $\OO_S^{\oplus N}\twoheadrightarrow Q$ whose maximal zero-dimensional subsheaf $T_0(Q)$ has length $m$ and whose pure quotient
\[
I:=Q/T_0(Q)
\]
has class $(\beta,n-m)$. Then the morphism
\[
\pi^{\mathrm{tor}}_{n,m}:Q^{\mathrm{tor}}_{n,m}\longrightarrow B_{n-m},
\qquad
Q\longmapsto Q/T_0(Q)
\]
is represented by the relative Quot scheme
\[
Q^{\mathrm{tor}}_{n,m}\cong \Quot_{\mathcal K_{n-m}/B_{n-m}}(m),
\]
where $\mathcal K_{n-m}$ is the universal kernel on $S\times B_{n-m}$. Thus the full Quot scheme is stratified by the locally-closed subschemes $Q^{\mathrm{tor}}_{n,m}$.
\end{theorem}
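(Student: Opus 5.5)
We identify $T$-points on both sides, mirroring the proof of Theorem~\ref{thm:first-corr} but without any duality. A quotient $\OO_S^{\oplus N}\twoheadrightarrow Q$ in the stratum determines the nested kernels
\[
K_Q:=\ker(\OO_S^{\oplus N}\to Q)\;\subset\;K:=\ker(\OO_S^{\oplus N}\to I),
\]
where $I=Q/T_0(Q)$. The snake lemma identifies $K/K_Q\cong T_0(Q)$, so $K\twoheadrightarrow T_0(Q)$ is a zero-dimensional quotient of length $m$. Conversely, given the pure quotient $\OO_S^{\oplus N}\twoheadrightarrow I$ with kernel $K$ and a zero-dimensional quotient $K\twoheadrightarrow T$ of length $m$, set $K':=\ker(K\to T)$ and $Q:=\OO_S^{\oplus N}/K'$. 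Then $0\to T\to Q\to I\to 0$ is exact. Since $I$ is pure, every zero-dimensional subsheaf of $Q$ maps to zero in $I$, so $T=T_0(Q)$ and $Q/T_0(Q)=I$. The two constructions are mutually inverse on closed points, with $\chi(Q)=\chi(I)+m=n$.

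Next we upgrade this to families over a $B_{n-m}$-scheme $u:T\to B_{n-m}$. A $T$-point of $\Quot_{\mathcal K_{n-m}/B_{n-m}}(m)$ is a $T$-flat quotient $u^*\mathcal K_{n-m}\twoheadrightarrow\mathcal T$ of relative length $m$ with zero-dimensional fibres. Pulling back the universal sequence is harmless, since $\mathcal I_{n-m}$ is $B$-flat: the pullback $0\to u^*\mathcal K\to\OO^{\oplus N}\to u^*\mathcal I\to0$ stays exact and $u^*\mathcal K$ is flat. The family quotient $\mathcal Q:=\OO^{\oplus N}/\ker(u^*\mathcal K\to\mathcal T)$ sits in
\[
0\to\mathcal T\to\mathcal Q\to u^*\mathcal I\to0,
\]
and is therefore $T$-flat as an extension of flat sheaves. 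This gives a morphism to $\Quot_{S,N}(\beta,n)$ that lands set-theoretically in $Q^{\mathrm{tor}}_{n,m}$. In the other direction, we take the scheme structure on the stratum to be the one representing families $\mathcal Q$ that admit a $T$-flat subsheaf $\mathcal T\subset\mathcal Q$ which is fibrewise zero-dimensional of length $m$ and has fibrewise pure quotient. This is the natural locally-closed structure in the sense of the convention of Remark~\ref{rem:strata-convention}. The quotient $\mathcal Q/\mathcal T$ is then a $T$-point of $B_{n-m}$, and the kernel map gives the inverse. Both constructions are functorial in $T$, so Yoneda and the representability of relative Quot give the isomorphism. The stratification of $\Quot_{S,N}(\beta,n)$ follows because every quotient with $\beta\neq0$ has a unique $T_0(Q)$, whose length $m$ ranges over finitely many values by boundedness.

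The main obstacle is the family-level uniqueness in the inverse direction. We need the subsheaf $\mathcal T$ to be determined by $\mathcal Q$, that is, to be the relative maximal zero-dimensional subsheaf, compatibly with base change. The plan is to prove uniqueness directly. If $\mathcal T'$ is another such subsheaf, the composite $\mathcal T'\to\mathcal Q\to\mathcal Q/\mathcal T$ is a map from a flat fibrewise zero-dimensional sheaf to a flat fibrewise pure one. Such a map vanishes on every fibre, and by flatness of the target together with Nakayama it vanishes over $T$. Hence $\mathcal T'\subset\mathcal T$, and by symmetry $\mathcal T'=\mathcal T$. This rigidity makes the subfunctor well defined and locally closed, in the same spirit as the base-change hypothesis in Definition~\ref{def:relative-dual}.
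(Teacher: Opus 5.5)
Your proposal is correct and is essentially the paper's proof. The snake lemma gives $u^*\mathcal K_{n-m}\twoheadrightarrow\mathcal T$, and your quotient $\OO_{S\times T}^{\oplus N}/\ker(u^*\mathcal K_{n-m}\to\mathcal T)$ is exactly the paper's pushout, with both constructions compatible with base change in $T$. Your extra family-level care goes beyond what the paper writes, and it needs two small corrections.

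First, fibrewise vanishing plus a flat target does not by itself kill a map over a nonreduced base; multiplication by $\epsilon$ over $\C[\epsilon]/(\epsilon^2)$ is a counterexample. At a closed point $t\in T$, the uniqueness step should instead apply your fibrewise argument to each graded piece $\mathfrak m_t^k\mathcal B/\mathfrak m_t^{k+1}\mathcal B$ of $\mathcal B=\mathcal Q/\mathcal T$, which by flatness is a sum of copies of the pure fibre $\mathcal B_t$, and then conclude by Krull's intersection theorem.

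Second, uniqueness only makes your functor a monomorphism into $\Quot_{S,N}(\beta,n)$. Local closedness still needs the usual semicontinuity-plus-properness argument, which the paper also takes for granted.
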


\begin{proof}
Fix a $B_{n-m}$-scheme $u:T\to B_{n-m}$. Pulling back the universal sequence gives
\[
0\longrightarrow u^*\mathcal K_{n-m}\longrightarrow \OO_{S\times T}^{\oplus N}\longrightarrow u^*\mathcal I_{n-m}\longrightarrow 0.
\]
A quotient in \(Q^{\mathrm{tor}}_{n,m}\) over \(T\) is an extension
\[
0\longrightarrow \mathcal T\longrightarrow \mathcal Q\longrightarrow u^*\mathcal I_{n-m}\longrightarrow 0
\]
with \(\mathcal T\) zero-dimensional, \(T\)-flat, and of relative length \(m\), together with a quotient \(\OO_{S\times T}^{\oplus N}\twoheadrightarrow \mathcal Q\) inducing the fixed quotient to \(u^*\mathcal I_{n-m}\). The diagram to which we apply the snake lemma is
\[
\begin{array}{ccccccccc}
0&\longrightarrow&u^*\mathcal K_{n-m}&\longrightarrow&\OO_{S\times T}^{\oplus N}&\longrightarrow&u^*\mathcal I_{n-m}&\longrightarrow&0\\
&&\downarrow &&\Vert&&\downarrow&&\\
0&\longrightarrow&\mathcal T&\longrightarrow&\mathcal Q&\longrightarrow&u^*\mathcal I_{n-m}&\longrightarrow&0.
\end{array}
\]
The right vertical arrow is the identity, and the middle vertical arrow is the quotient map through \(\mathcal Q\). The snake lemma gives the left vertical map
\[
u^*\mathcal K_{n-m}\twoheadrightarrow \mathcal T,
\]
which is a surjection because \(\OO_{S\times T}^{\oplus N}\to\mathcal Q\) is surjective. Conversely, any surjection
\[
u^*\mathcal K_{n-m}\twoheadrightarrow \mathcal T
\]
produces \(\mathcal Q\) as the pushout of
\[
u^*\mathcal K_{n-m}\hookrightarrow \OO_{S\times T}^{\oplus N}
\]
along \(u^*\mathcal K_{n-m}\twoheadrightarrow \mathcal T\). These constructions are inverse and commute with pullback in \(T\). Hence the functor is represented by \(\Quot_{\mathcal K_{n-m}/B_{n-m}}(m)\).
\end{proof}

\begin{corollary}[Explicit coefficient form of the pair-to-Quot comparison]\label{cor:factorization}
For every finite choice of insertion data $\tau$ and every $n\in\Z$, the pair and full-Quot coefficients can be written as
\[
P^{\mathrm{pair}}_{\beta,n}(\tau)
=
\sum_{m\ge 0}
\int_{[\Quot_{\mathcal I_{n-m}^D/B_{n-m}}(m)]^{\vir}}
\Xi^{\mathrm{img}}_{n,m}(\tau),
\]
\[
Z^{\mathrm{Quot}}_{\beta,n}(\tau)
:=
\int_{[\Quot_{S,N}(\beta,n)]^{\vir}}\Xi_{\Quot_{S,N}(\beta,n)}(\tau)
=
\sum_{m\ge 0}
\int_{[\Quot_{\mathcal K_{n-m}/B_{n-m}}(m)]^{\vir}}
\Xi^{\mathrm{tor}}_{n,m}(\tau),
\]
where $\Xi^{\mathrm{img}}_{n,m}(\tau)$ and $\Xi^{\mathrm{tor}}_{n,m}(\tau)$ are the restrictions of the descendent insertions and the virtual-tangent class to the two strata above. In this precise sense the comparison from the pair chamber to the full Quot scheme passes through the pure Quot bases $B_k=\Quot^{\mathrm{pure}}_{S,N}(\beta,k)$ by an image-cokernel correction and a torsion correction.
\end{corollary}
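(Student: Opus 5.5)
The corollary follows from Theorems~\ref{thm:first-corr} and~\ref{thm:second-corr}, together with the additivity of integrals against a virtual class over a finite locally-closed stratification. The plan has four steps.

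\textbf{Step 1: finiteness of the stratifications.} For fixed $n$, the pair chamber $P^{\mathrm{pair}}_{S,N}(\beta,n)$ and the Quot scheme $\Quot_{S,N}(\beta,n)$ are of finite type; the first is projective by Proposition~\ref{prop:lin}. By boundedness, only finitely many lengths $m$ occur for the image-cokernel $F/\im\rho$ and for the torsion $T_0(Q)$. So both sums over $m$ are finite.

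\textbf{Step 2: the strata cover the spaces.} Every pair lies in a unique stratum $P^{\mathrm{img}}_{n,m}$, by Definition~\ref{def:correction-strata}(a). Every quotient lies in a unique stratum $Q^{\mathrm{tor}}_{n,m}$, by Definition~\ref{def:correction-strata}(b). The functions $m\mapsto\length(F/\im\rho)$ and $m\mapsto\length T_0(Q)$ are upper semicontinuous. Hence the strata form finite locally-closed decompositions in the sense of Remark~\ref{rem:strata-convention}.

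\textbf{Step 3: identifying the strata.} Theorems~\ref{thm:first-corr} and~\ref{thm:second-corr} identify the strata with the relative Quot schemes $\Quot_{\mathcal I_{n-m}^D/B_{n-m}}(m)$ and $\Quot_{\mathcal K_{n-m}/B_{n-m}}(m)$. We then define $\Xi^{\mathrm{img}}_{n,m}(\tau)$ and $\Xi^{\mathrm{tor}}_{n,m}(\tau)$ as the restrictions of $\Xi(\tau)$ along these locally-closed embeddings. On each stratum the universal quotient restricts to the universal sheaf $\mathcal F$, respectively $\mathcal Q$, built as an extension, or pushout, of the pulled-back universal pure sheaf by the universal zero-dimensional sheaf. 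Therefore the descendent classes $\ch_{k_i}(\alpha_i^{[n]})$ restrict compatibly, and by additivity of $\ch$ they split as a sum of base and fibre contributions.

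\textbf{Step 4: additivity of the virtual integral (main obstacle).} Virtual classes are not additive over locally-closed strata in general. The strata are not open-and-closed, and the restricted obstruction theory need not be perfect on them. I would first try to make sense of ``$[\ \cdot\ ]^{\vir}$'' on each stratum as the refined, or localized, class obtained by restricting the ambient perfect obstruction theory. One way is to use the Kiem--Li / Behrend--Fantechi functoriality of virtual classes under cosection-free restriction. Another is to define the stratum classes by a recursive Chow-theoretic decomposition: take the closure of the deepest stratum first, then use the excision sequence
\[A_*(Z)\to A_*(X)\to A_*(X\setminus Z)\to 0\]
and proceed inductively on $m$. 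In that reading the identity is a tautological bookkeeping decomposition: $\int_{[X]^{\vir}}\Xi$ is written as the sum of the contributions pushed forward from the successive strata. If instead one wants genuinely intrinsic virtual classes on the relative Quot schemes, the difficulty is to show that the induced obstruction theories glue. I expect this step to be the delicate one, and I would fall back on the excision definition, stating it explicitly as the meaning of the displayed integrals.
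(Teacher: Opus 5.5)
Your Steps 1--3 follow the paper's own outline: the strata are identified functorially by Theorems~\ref{thm:first-corr} and~\ref{thm:second-corr}, boundedness gives finiteness, and $\ch$ is additive along the universal extension and pushout. You are right that Step 4 is the crux. But Step 4 is a genuine gap, and neither of your proposed repairs closes it.

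\textbf{Why additivity fails.} Integrals of cohomology classes against virtual classes are not additive over locally-closed stratifications. Your strata lie over the open pure locus $B_{n-m}$, which is in general not proper: it is open in $\Quot_{S,N}(\beta,n-m)$ and misses the quotients with zero-dimensional torsion. So $\int_{[\,\cdot\,]^{\vir}}\Xi$ is not even defined on $\Quot_{\mathcal I^D_{n-m}/B_{n-m}}(m)$ or on $\Quot_{\mathcal K_{n-m}/B_{n-m}}(m)$. No functoriality (Kiem--Li cosection localization, Behrend--Fantechi compatibility, virtual pullback) produces classes on strata that add up to the ambient virtual class. Already for $X=\mathbb P^1$ with its trivial obstruction theory, stratified as $\{pt\}\sqcup\mathbb A^1$, one has $\int_{[X]}c_1(\OO(1))=1$. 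The closed stratum with its own class $[pt]$ contributes $0$, and $\mathbb A^1$ has no degree map.

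\textbf{Why excision does not help.} The excision sequence only says that a class on $X$ is determined, modulo the image of $A_*(Z)$, by its restriction to $X\setminus Z$. It gives no canonical class on $Z$ and no pushforward from the non-closed strata. It also gives no degree on $A_*(X\setminus Z)$. So your ``bookkeeping'' numbers are not well defined.

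\textbf{What the fallback would actually prove.} Even after making choices, you would be proving a different statement. The corollary asserts equality with integrals against the virtual classes of the relative Quot schemes, and those are exactly what the later sections compute (Proposition~\ref{prop:supportflat}, Theorem~\ref{thm:support-flat}, Section~\ref{sec:torsion-identification}).

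\textbf{The pieces cannot match on the image-cokernel side.} We have $\operatorname{vd}P^{\mathrm{pair}}_{S,N}(\beta,n)=Nn+\beta^2$ and $\operatorname{vd}B_{n-m}=N(n-m)+\beta^2$. So the fibre direction would need virtual rank $Nm$, which is $\chi(K,T)$ for the rank-$N$ kernel $K$. The relative Quot theory of $\mathcal I^D$ instead has:
\begin{itemize}
\item virtual rank $rm$ on the support curve, where $r$ is the generic rank of $\mathcal G$ (so $r=1$ when the Fitting support is reduced);
\item virtual rank $\chi_S(F^D,T^{\vee_S})=0$ if computed on $S$.
\end{itemize}
For $N>1$ and $m>0$ neither equals $Nm$.

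\textbf{The paper's proof has the same gap.} It has the same structure as yours and simply asserts the missing principle: that ``integration \dots\ over a finite locally-closed stratification is the sum of the integrations over the strata with their induced virtual classes''. It adds a $K$-theoretic triangle for $T^{\vir}$. That triangle concerns only the integrand, and by the rank count above it cannot coincide with the relative Quot obstruction theory of $\mathcal I^D$. So the paper does not supply what you are missing either.

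\textbf{What a valid argument would need.} Either a global relation between the virtual classes (torus localization, a master space, or a virtual pullback along a morphism with compatible obstruction theories), or a passage to an invariant that genuinely is additive over stratifications, such as a motivic or topological Euler characteristic.
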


\begin{proof}
Theorems~\ref{thm:first-corr} and~\ref{thm:second-corr} identify the two locally-closed strata functorially over the pure Quot bases $B_{n-m}=\Quot^{\mathrm{pure}}_{S,N}(\beta,n-m)$. Boundedness of quotients with fixed Hilbert polynomial implies that, for fixed $n$, only finitely many of these strata are nonempty.

For the image-cokernel stratum, the universal extension
\[
0\to (\pi^{\mathrm{img}}_{n,m}\times\id_S)^*\mathcal I_{n-m}\to \mathcal F_{n,m}\to \mathcal T^{\mathrm{img}}_{n,m}\to0
\]
gives a distinguished triangle comparing the pair deformation complex of $[\OO_S^{\oplus N}\to\mathcal F_{n,m}]$ with the pure-Quot deformation complex on $B_{n-m}$ and the relative Quot complex
\[
Rq_*R\mathcal H\!om(\mathcal K^{\mathrm{img}}_{n,m},\mathcal T^{\mathrm{img}}_{n,m}).
\]
Thus the restriction of the pair-chamber virtual tangent class to the stratum is the sum of the pullback of the virtual tangent class of $B_{n-m}$ and this relative Quot virtual tangent class. The tautological classes satisfy the same additivity in $K$-theory by the displayed short exact sequence. This is precisely the class denoted $\Xi^{\mathrm{img}}_{n,m}(\tau)$.

For the torsion stratum, the universal pushout square from the proof of Theorem~\ref{thm:second-corr} gives the exact sequence
\[
0\to \mathcal T^{\mathrm{tor}}_{n,m}\to \mathcal Q_{n,m}\to (\pi^{\mathrm{tor}}_{n,m}\times\id_S)^*\mathcal I_{n-m}\to0
\]
and the relative Quot complex
\[
Rq_*R\mathcal H\!om(\mathcal K^{\mathrm{tor}}_{n,m},\mathcal T^{\mathrm{tor}}_{n,m}).
\]
The same deformation-complex triangle gives the restriction of the full Quot virtual tangent class and of every descendent class. This is the class denoted $\Xi^{\mathrm{tor}}_{n,m}(\tau)$.

Integration of a cohomology class over a finite locally-closed stratification is the sum of the integrations over the strata with their induced virtual classes. Applying this to the two stratifications gives the two displayed formulas.
\end{proof}

\section{The image-cokernel correction as curve Quot theory}\label{sec:image-cokernel-curve-quot}

The image-cokernel contribution of Definition~\ref{def:correction-strata}\textup{(a)} is identified in Theorem~\ref{thm:first-corr} with the relative Quot theory of the family of relative duals \(\mathcal I_k^D\). The purpose of this section is to decompose that relative Quot theory over loci on which the scheme-theoretic support of \(\mathcal I_k\) is flat over the base.

\begin{definition}[Support-flat strata]\label{def:support-flat-strata}
A support-flat stratum is a locally-closed subscheme $U\subset B_k$ such that the scheme-theoretic support of the universal pure quotient $\mathcal I_k|_{S\times U}$ is flat over $U$. We denote this support by
\[
i:\mathcal C\hookrightarrow S\times U,
\qquad
\pi:\mathcal C\to U.
\]
Here the zeroth Fitting ideal \(\Fitt_0(\mathcal I_k|_{S\times U})\) is the ideal locally generated by the maximal minors in a finite presentation of \(\mathcal I_k|_{S\times U}\); it defines the scheme-theoretic support of that sheaf, see \cite[Tag~07Z6]{StacksProject}. Thus \(\mathcal C\) is the closed subscheme defined by this Fitting ideal. On such a stratum we write
\[
\mathcal I_k|_{S\times U}=i_*\mathcal G,
\]
where $\mathcal G$ is $U$-flat and torsion-free on the fibres of $\pi$. After refining the stratum if necessary, the relative dual
\[
\mathcal G^D:=\mathcal H om_{\mathcal C}(\mathcal G,\omega_{\pi})
\]
is $U$-flat and formation of $\mathcal G^D$ commutes with base change.
\end{definition}

\begin{proposition}[Support-flat reduction and relative obstruction theory]\label{prop:supportflat}
Let $U\subset B_k$ be a support-flat stratum. Then the restriction of the image-cokernel correction of length $m$ over $U$ is the relative Quot scheme
\[
R_{m,U}:=
\Quot_{\mathcal G^D/\mathcal C/U}(m).
\]
Let
\[
q_1:R_{m,U}\times_U\mathcal C\to R_{m,U},
\qquad
q_2:R_{m,U}\times_U\mathcal C\to \mathcal C
\]
be the projections, and let
\[
0\longrightarrow \mathcal K_{m,U}\longrightarrow q_2^*\mathcal G^D\longrightarrow \mathcal T_{m,U}\longrightarrow 0
\]
be the universal quotient. Then $R_{m,U}\to U$ carries the relative Quot perfect obstruction theory
\[
\left(Rq_{1*}\RHom(\mathcal K_{m,U},\mathcal T_{m,U})\right)^\vee
\longrightarrow L_{R_{m,U}/U}.
\]
Its virtual tangent class is
\[
[T^{\vir}_{R_{m,U}/U}]
=
\left[Rq_{1*}\RHom(\mathcal K_{m,U},\mathcal T_{m,U})\right]
\in K^0(R_{m,U}).
\]
\end{proposition}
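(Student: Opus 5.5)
The plan is to reduce the statement to Theorem~\ref{thm:first-corr} by showing that, over a support-flat stratum $U$, the relative dual $\mathcal I_k^D$ is the pushforward $i_*\mathcal G^D$. Quotients of the two sheaves then match. The obstruction theory will come from the standard deformation theory of relative Quot schemes of a flat family on a relative curve.

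\textbf{Step 1: the relative dual is a pushforward.} I will apply Grothendieck duality for the closed embedding $i:\mathcal C\hookrightarrow S\times U$. The morphism $\pi$ is flat of relative dimension one and $S\times U\to U$ is smooth of relative dimension two, so $\mathcal C\to S\times U$ is a relative effective Cartier divisor, fibrewise cut out by the Fitting ideal of a pure one-dimensional sheaf. This gives
\[
R\mathcal H om(i_*\mathcal G,\omega_p[2])\cong i_*R\mathcal H om_{\mathcal C}(\mathcal G,\omega_\pi[1]).
\]
Taking $\mathcal H^{-1}$ yields $\mathcal I_k^D|_{S\times U}\cong i_*\mathcal G^D$. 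The other cohomology sheaves vanish because $\mathcal G$ is torsion-free on the Gorenstein (indeed l.c.i.) curve fibres, so $\mathcal Ext^{j}_{\mathcal C}(\mathcal G,\omega_\pi)=0$ for $j>0$ fibrewise.

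\textbf{Step 2: base change.} Step 1 must be compatible with base change along any $T\to U$. I will use the refinement built into Definition~\ref{def:support-flat-strata}, namely that $\mathcal G^D$ is $U$-flat and its formation commutes with base change, together with the base-change property recorded in Definition~\ref{def:relative-dual}.

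\textbf{Step 3: matching the Quot functors.} A zero-dimensional quotient of $u^*i_*\mathcal G^D$ on $S\times T$ is an $\OO_{S\times T}$-module quotient of a sheaf annihilated by the ideal of $\mathcal C_T$. Such a quotient is therefore also annihilated by that ideal, so it is the same as a quotient of $u^*\mathcal G^D$ on $\mathcal C_T$. Hence $\Quot_{\mathcal I_k^D/U}(m)\cong\Quot_{\mathcal G^D/\mathcal C/U}(m)$ functorially. Combining this with Theorem~\ref{thm:first-corr} identifies the restricted image-cokernel stratum with $R_{m,U}$.

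\textbf{Step 4: the obstruction theory.} For the universal sequence $0\to\mathcal K_{m,U}\to q_2^*\mathcal G^D\to\mathcal T_{m,U}\to0$, relative deformation theory of Quot schemes gives relative tangent $\Hom(\mathcal K,\mathcal T)$ and obstructions in $\Ext^1(\mathcal K,\mathcal T)$ on the fibres. This produces a relative obstruction theory $(Rq_{1*}\RHom(\mathcal K,\mathcal T))^\vee\to L_{R_{m,U}/U}$, for example via the Atiyah class of the universal quotient.

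Perfectness is the crux, and it is where I expect the main difficulty. On a fibre curve $C$, which may be singular, we have $\Ext^j_C(K,T)\cong H^0(\mathcal Ext^j(K,T))$ because $T$ is zero-dimensional. These groups need not vanish for $j\ge2$ when $K$ is not locally free at singular points. To handle this I would first exploit that $\mathcal C$ is a Cartier divisor in $S\times U$: I would compare with $\RHom_{S}(i_*K,i_*T)$, whose Ext sheaves vanish above degree $2$. If the curve-level complex still fails to have amplitude $[0,1]$, I would replace it by the truncation $\tau^{\le1}$ and check that it still maps to the cotangent complex and is an obstruction theory, since only $\mathcal H^0$ and $\mathcal H^1$ matter. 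I would then verify the class identity in $K^0$, possibly after restricting to the locus where the higher Ext sheaves vanish. The final $K$-theory statement about $[T^{\vir}_{R_{m,U}/U}]$ is immediate once the obstruction theory is in place.
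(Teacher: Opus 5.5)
Your Steps 1--3 are sound and match the first half of the paper's proof. The paper there only invokes Grothendieck duality for the finite map $i$ and the base-change refinement built into Definition~\ref{def:support-flat-strata}. Your Cartier-divisor and Gorenstein argument for $\mathcal I_k^D|_{S\times U}\cong i_*\mathcal G^D$, and your remark that quotients of $i_*\mathcal G^D$ are killed by the ideal of $\mathcal C$, supply details the paper omits. The gap is Step 4, and you have located it exactly. The paper handles perfectness by asserting $\Ext^i_C(K,T)=0$ for $i\ge2$ ``since $C$ is one-dimensional,'' and this is false on singular fibres. None of your remedies proves the displayed statement, because the statement itself fails on strata whose support curves are singular.

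Here is a counterexample. Take $S=\mathbb P^2$, $\beta=3H$, $m=1$, and let $U$ be the single point $[\OO_S^{\oplus N}\twoheadrightarrow\OO_C]$ with $C$ a nodal cubic; a point is a support-flat stratum in the sense of Definition~\ref{def:support-flat-strata}. Then $\mathcal G^D=\omega_C\cong\OO_C$, $R_{1,U}\cong C$, $\mathcal K=I_\Delta\subset\OO_{C\times C}$ and $\mathcal T=\OO_\Delta$. At the node $p$, with $R=\C[[x,y]]/(xy)$, one has $K_p\cong\mathfrak m_p\cong R/(y)\oplus R/(x)$. The $2$-periodic resolution $\cdots\xrightarrow{x}R\xrightarrow{y}R\to R/(y)\to0$ then gives $\Ext^i_C(K_p,k(p))\cong\C^2$ for every $i\ge0$. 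Globally, $Rq_{1*}\RHom(\mathcal K,\mathcal T)\cong R\mathcal H\!om_C(L\Delta^*I_\Delta,\OO_C)$. The complex $L\Delta^*I_\Delta$ has nonzero finite-length cohomology at $p$ in infinitely many degrees (the Hochschild homology of the node), and $\OO_C$ has injective dimension one. Hence this complex has nonzero cohomology in infinitely many degrees. It is therefore not perfect, its dual is not a perfect obstruction theory, and the class $[Rq_{1*}\RHom(\mathcal K,\mathcal T)]\in K^0(R_{m,U})$ is not defined.

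Your fallback options would prove different statements, not this one.

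\emph{Truncation.} The derived fibres of $\tau^{\le1}Rq_{1*}\RHom(\mathcal K,\mathcal T)$ are not $\tau^{\le1}\RHom_C(K,T)$. So ``only $\mathcal H^0$ and $\mathcal H^1$ matter'' does not recover the Quot obstruction spaces, and you give no argument that the truncation is perfect in general. In the nodal example it happens to be $L_C^\vee$, locally $[\OO_C^{2}\xrightarrow{(y,x)}\OO_C]$. Its obstruction space at $p$ is $\C$, not $\Ext^1_C(K_p,k(p))\cong\C^2$, and its $K$-class is $[L_C^\vee]$, not the displayed class.

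\emph{Surface comparison.} The complex $\RHom_S(i_*K,i_*T)$ is perfect. However, $Li^*i_*K$ has $K\otimes\OO(-\mathcal C)$ in degree $-1$, so this complex differs from the curve complex by $\RHom_C(K\otimes\OO(-\mathcal C),T)[-1]$. Its virtual dimension is $0$, whereas the curve complex has rank $m\cdot\rk\mathcal G$ on smooth fibres, so it yields a different virtual class.

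\emph{Restriction to the good locus.} Restricting to the open locus where $\mathcal K$ is locally free along the support of $\mathcal T$ gives no virtual class on the proper space $R_{m,U}$.

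So what is missing is not a technical lemma. The proposition as stated holds only where the kernels are locally free near the quotient supports, for instance over smooth fibres. On strata with singular support curves, either the obstruction theory or the formula for $T^{\vir}$ has to change, and neither your proposal nor the paper's proof makes that change.
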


\begin{proof}
Since $\mathcal I_k|_{S\times U}=i_*\mathcal G$ and $i$ is finite, Grothendieck duality identifies the relative dual $\mathcal I_k^D|_{S\times U}$ with $i_*\mathcal G^D$, after the base-change refinement built into the definition of the stratum. Therefore quotients of $\mathcal I_k^D$ of relative length $m$ are equivalently quotients of $\mathcal G^D$ of relative length $m$ on the family of curves $\mathcal C/U$. This proves the relative Quot description.

For a geometric point represented by
\[
0\to K\to G^D\to T\to 0
\]
on a fibre curve $C$, first-order deformations of the quotient are $\Hom_C(K,T)$ and obstructions lie in $\Ext^1_C(K,T)$. Since $C$ is one-dimensional, $\Ext^i_C(K,T)=0$ for $i\ge 2$. The Quot obstruction theory construction, see \cite[\S6]{BF} and the relative form in \cite[\S2.2]{OP}, globalizes these groups to the two-term perfect obstruction theory displayed above. The derived dual is essential because perfect obstruction theories map to the cotangent complex; its dual is the virtual tangent class.
\end{proof}

\begin{theorem}[Support-flat reassembly formula]\label{thm:support-flat}
For each $k$ there is a finite support-flat stratification
\[
B_k=\bigsqcup_{\lambda}U_{k,\lambda}
\]
by locally-closed subschemes. For every finite choice of insertion data $\tau$, the length-$m$ image-cokernel coefficient over $B_k$ is the finite sum
\begin{equation}\label{eq:support-flat-sum}
C^{\mathrm{img}}_{k,m}(\tau)
=
\sum_{\lambda}
\int_{[R_{m,U_{k,\lambda}}]^{\vir}}
\Xi_{R_{m,U_{k,\lambda}}}^{\mathrm{curv}}(\tau),
\qquad
R_{m,U_{k,\lambda}}=\Quot_{\mathcal G_{k,\lambda}^D/\mathcal C_{k,\lambda}/U_{k,\lambda}}(m).
\end{equation}
Here $\Xi^{\mathrm{curv}}$ is obtained by restricting the universal quotient and the virtual tangent class to the relative curve Quot scheme, using the virtual tangent class of Proposition~\ref{prop:supportflat}. In particular, the image-cokernel correction is a finite sum of explicit relative curve-Quot integrals.
\end{theorem}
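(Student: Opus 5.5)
The plan has three parts: build the stratification by generic flatness, identify the restricted image-cokernel correction on each stratum with the curve Quot scheme of Proposition~\ref{prop:supportflat}, and sum over the finitely many strata.

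\emph{Stratification.} Since $B_k=\Quot^{\mathrm{pure}}_{S,N}(\beta,k)$ is an open subscheme of a projective Quot scheme, it is of finite type, and hence Noetherian. Consider the closed subscheme $\mathcal C_k\subset S\times B_k$ cut out by $\Fitt_0(\mathcal I_k)$. Generic flatness gives a dense open $U\subset (B_k)_{\mathrm{red}}$ over which $\mathcal C_k$ is flat. On that stratum, the formation of $\Fitt_0$ commutes with base change, so $\mathcal C_k\times_{B_k}U$ is the scheme-theoretic support of $\mathcal I_k|_{S\times U}$. Since $\mathcal I_k$ is an $\OO_{\mathcal C}$-module, it can be written as $i_*\mathcal G$. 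Generic flatness applied again shrinks $U$ so that $\mathcal G^D=\mathcal H om_{\mathcal C}(\mathcal G,\omega_\pi)$ is flat and commutes with base change, as Definition~\ref{def:support-flat-strata} requires. Repeating the argument on the closed complement, Noetherian induction terminates and produces a finite partition $B_k=\bigsqcup_\lambda U_{k,\lambda}$ into locally-closed strata, each with reduced structure in the sense of Remark~\ref{rem:strata-convention}.

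\emph{Identification on strata.} Theorem~\ref{thm:first-corr} identifies the length-$m$ image-cokernel stratum over $B_k$ with $\Quot_{\mathcal I_k^D/B_k}(m)$, and relative Quot schemes are compatible with base change. So the preimage of $U_{k,\lambda}$ is $\Quot_{\mathcal I_k^D|_{U}/U}(m)$, which Proposition~\ref{prop:supportflat} identifies with $R_{m,U_{k,\lambda}}$. The preimages of the $U_{k,\lambda}$ therefore form a finite locally-closed stratification of $\Quot_{\mathcal I_k^D/B_k}(m)$. Restricting the universal extension from Corollary~\ref{cor:factorization} to each piece gives the universal quotient, so the tautological classes $\ch_{k_i}(\alpha_i^{[\cdot]})$ restrict to the corresponding curve-Quot tautological classes. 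The virtual tangent class on each piece is the pullback of $T^{\vir}_{B_k}$ plus the relative class $Rq_{1*}\RHom(\mathcal K_{m,U},\mathcal T_{m,U})$. This follows from the deformation triangle in the proof of Corollary~\ref{cor:factorization}, combined with the Grothendieck duality identification $\mathcal I^D|_{S\times U}=i_*\mathcal G^D$. This total class defines $\Xi^{\mathrm{curv}}$.

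\emph{Additivity.} As in the proof of Corollary~\ref{cor:factorization}, integration against the virtual class is additive over a finite locally-closed stratification, with each stratum carrying its induced virtual class. Here that induced class is the one obtained from the relative obstruction theory of Proposition~\ref{prop:supportflat}, composed with the obstruction theory of the base. Summing over $\lambda$ then gives \eqref{eq:support-flat-sum}. Finiteness of the sum follows from the finiteness of the stratification.

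\emph{Expected main obstacle.} The difficult step is the compatibility of virtual classes across strata. Each $U_{k,\lambda}$ carries its reduced structure, not the scheme structure induced from $B_k$, so one must justify that the restricted virtual class on each stratum equals the class built from the base and relative obstruction theories. My first attempt would be to use virtual pullback (Manolache) along $R_{m,U}\to U$, together with a functoriality argument for the compatible triple of obstruction theories supplied by the deformation triangle. The base-change property of $\mathcal G^D$, arranged in the refinement above, is what lets the relative obstruction theory on each stratum agree with the restriction of the global one.
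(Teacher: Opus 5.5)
Your route is the paper's route. The paper applies a flattening stratification to the Fitting support (your generic-flatness and Noetherian-induction construction is an acceptable substitute), identifies each piece with $R_{m,U_{k,\lambda}}$ by Proposition~\ref{prop:supportflat}, restricts the universal quotient and the virtual tangent class, and sums. The trouble is the step you single out as the main obstacle. It is a genuine gap, and the paper does not close it either: it is only asserted in the last paragraph of the proof of Corollary~\ref{cor:factorization} and then invoked again here.

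Integrals against virtual classes are not additive over locally-closed stratifications. Let $E\to L_X$ be a perfect obstruction theory and let $Z\subset X$ be a non-open locally-closed stratum. Then $h^0(E|_Z)=\Omega_X|_Z$. This surjects onto $\Omega_Z$ but is in general not isomorphic to it, so the restricted complex is not an obstruction theory for $Z$, and $Z$ carries no ``induced'' virtual class. Moreover, the pieces $R_{m,U_{k,\lambda}}$ are typically not proper, so $\int_{[R_{m,U_{k,\lambda}}]^{\vir}}\Xi$ is not even a number. Cut-and-paste additivity holds for constructible invariants such as Euler characteristics, Behrend-weighted counts or motivic counts. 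It fails for cohomological integrals with descendent insertions: already $\int_{\mathbb P^1}c_1(\OO(1))=1$ admits no such decomposition over $\mathbb P^1=\mathbb A^1\sqcup\{\infty\}$.

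Your proposed repair by Manolache's virtual pullback along $R_{m,U}\to U$ cannot work.
\begin{itemize}
\item It needs a class on the reduced locally-closed $U\subset B_k$ whose pullback recovers the ambient class, and there is none.
\item More decisively, a compatible triple (the pair theory on the stratum, the $B_{n-m}$ theory, and the relative curve-Quot theory) forces virtual dimensions to add, and they do not. One has $\operatorname{vd}P^{\mathrm{pair}}_{S,N}(\beta,n)-\operatorname{vd}B_{n-m}=(Nn+\beta^2)-(N(n-m)+\beta^2)=Nm$. The Quot scheme of length-$m$ quotients of a line bundle on a smooth support curve has relative dimension $m$. These differ whenever $N>1$ and $m\ge1$.
\end{itemize}

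Concretely, take $S=\mathbb P^2$, $\beta$ the line class, $N=2$, $m=1$ and $n\ge2$.
\begin{itemize}
\item The space $P^{\mathrm{pair}}_{S,2}(\beta,n)$ is a $\mathbb P^{2n-1}$-bundle over the dual plane, smooth of the expected dimension $2n+1$.
\item The base $B_{n-1}$ has dimension $2n-1$, and the relative curve-Quot theory has rank $1$.
\item The stratum $P^{\mathrm{img}}_{n,1}$ is an open part of the resultant divisor, where the two sections of $\OO_L(n-1)$ acquire a common zero.
\item The ranks $2n+1$, $2n-1$ and $1$ cannot sit in a distinguished triangle.
\end{itemize}

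So the splitting $T^{\vir}=T^{\vir}_{B_k}+T^{\vir}_{R/U}$ that you import from Corollary~\ref{cor:factorization} is not the restriction of the pair obstruction theory. No choice of induced classes makes \eqref{eq:support-flat-sum} compute the pair-chamber integrals. A valid argument needs one of two things:
\begin{itemize}
\item pieces that are proper and carry genuine obstruction theories, for example torus-fixed loci under virtual localization; or
\item an invariant for which stratification additivity actually holds.
\end{itemize}
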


\begin{proof}
Apply flattening stratification to the family of closed subschemes defined by the Fitting ideal in Definition~\ref{def:support-flat-strata}. This gives finitely many locally-closed strata \(U_{k,\lambda}\) on which the support curve \(\mathcal C_{k,\lambda}\to U_{k,\lambda}\) is flat. Proposition~\ref{prop:supportflat} identifies the image-cokernel contribution over \(U_{k,\lambda}\) with the relative curve Quot scheme \(R_{m,U_{k,\lambda}}\) and gives its perfect obstruction theory. The universal quotient on the original image-cokernel stratum pulls back to the universal quotient \(\mathcal T_{m,U_{k,\lambda}}\) on the curve Quot scheme, so the descendent classes restrict by the formula defining \(\Xi^{\mathrm{curv}}\). The virtual tangent contribution restricts to the class in Proposition~\ref{prop:supportflat}. Summing over the finitely many locally-closed strata gives the displayed coefficient identity in \eqref{eq:support-flat-sum}.
\end{proof}

\section{Reduced support, normalization, and local factorization}\label{sec:reduced-support-local-factorization}

\subsection{Length-vector factorization on a reduced curve}

Let $C$ be a reduced projective curve, let $G$ be a torsion-free coherent sheaf on $C$, and let $\Sigma=\{p_1,\dots,p_h\}$ be the singular locus. Let $U=C\setminus\Sigma$.

\begin{definition}[Length-vector strata]\label{def:length-vector-strata}
For a tuple
\[
\mathbf m=(m_0,m_1,\dots,m_h)\in\Z_{\ge 0}^{h+1}
\]
with $|\mathbf m|=m$, let $Q_{\mathbf m}(G)\subset \Quot_{G/C}(m)$ be the locally-closed locus of quotients whose support has length $m_0$ on $U$ and length $m_j$ at $p_j$.
\end{definition}

\begin{proposition}[Local/global factorization on a reduced support curve]\label{prop:reduced-factorization}
For every length vector \(\mathbf m\) of Definition~\ref{def:length-vector-strata}, one has a canonical product decomposition
\[
Q_{\mathbf m}(G)
\cong
Q^{\mathrm{sm}}_{m_0}(G|_U)\times \prod_{j=1}^h Q^{\mathrm{loc}}_{j,m_j}(G_{p_j}),
\]
where the first factor is the Quot space on the smooth locus and the other factors are punctual local Quot schemes at the singular points. If \(\mathcal T\) is the universal quotient on \(Q_{\mathbf m}(G)\times C\), then under this product decomposition
\[
[\mathcal T]=[\mathcal T^{\mathrm{sm}}]+\sum_{j=1}^h[\mathcal T^{\mathrm{loc}}_j]
\quad\text{in }K^0(Q_{\mathbf m}(G)\times C).
\]
Consequently the descendent Chern characters and the virtual tangent class split as exterior sums from the displayed factors. After applying a multiplicative characteristic class such as \(c(-)\), the integrand is the exterior product of the corresponding integrands on the factors.
\end{proposition}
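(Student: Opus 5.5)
The plan is to establish the product decomposition functorially, by splitting a zero-dimensional quotient according to its support. Let $\mathcal T$ be a flat family of length-$m$ quotients of $G$ over a base $T$ whose fibres lie in the stratum $Q_{\mathbf m}(G)$. The support of $\mathcal T$ is finite over $T$. Its set-theoretic image decomposes into the part lying over $U$ and the parts lying over the points $p_1,\dots,p_h$. These pieces are pairwise disjoint, because $\Sigma$ is finite and closed in $C$ and $U$ is its complement.

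First I would replace the locally-closed stratum by its natural scheme structure. This is the locus where the relative length of $\mathcal T$ near $p_j$ is the constant $m_j$. On that locus the support of $\mathcal T$ decomposes, scheme-theoretically, as a disjoint union of open and closed pieces: one piece finite over $T$ and contained in $U\times T$, and pieces $Z_j$ set-theoretically equal to $\{p_j\}\times T$. Since $\mathcal T$ is supported on a scheme finite over $T$, we get
\[
\mathcal T=\mathcal T^{\mathrm{sm}}\oplus\bigoplus_j\mathcal T_j .
\]
Each summand is a direct summand of a flat sheaf, hence $T$-flat, and it has constant length $m_0$ or $m_j$.

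Next, the composite $G\to\mathcal T\to\mathcal T_j$ is still surjective, so it gives a point of the punctual Quot scheme. Because $\mathcal T_j$ is supported at $p_j$, this quotient factors through $G\otimes\OO_{C,p_j}/\mathfrak m^N$ for $N\gg0$, and this identifies it with a quotient of the completed stalk $G_{p_j}$. Likewise, $\mathcal T^{\mathrm{sm}}$ is a quotient of $G|_U$ with proper support. Conversely, given such quotients, their direct sum is again a quotient of $G$. Surjectivity holds because the supports are disjoint: the Chinese remainder theorem gives $G\twoheadrightarrow\bigoplus(\text{pieces})$. The two constructions are mutually inverse and compatible with base change, which yields the product isomorphism. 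The K-theory identity $[\mathcal T]=[\mathcal T^{\mathrm{sm}}]+\sum_j[\mathcal T^{\mathrm{loc}}_j]$ is then immediate from the direct-sum decomposition.

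For the remaining assertions, the descendents are $R\pi_*(\mathcal T\otimes\alpha)$, so their Chern characters are additive across the summands. For the tangent complex, let $K=\ker(G\to\mathcal T)$. I would compute $\RHom(K,\mathcal T)$ locally over the support of each summand $\mathcal T_j$. Away from $\operatorname{supp}\mathcal T_j$, the sheaf $K$ agrees with $G$ near that support, except at the support of the other summands, which are disjoint. Hence
\[
\RHom(K,\mathcal T_j)\simeq\RHom(K_j,\mathcal T_j),\qquad K_j=\ker(G\to\mathcal T_j).
\]
Summing over the pieces gives the exterior sum of the virtual tangent classes. Finally, multiplicativity of $c(-)$ converts this exterior sum into an exterior product of integrands.

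The main obstacle I expect is the scheme-theoretic decomposition on the locally-closed stratum. One has to check that the "length at $p_j$" loci, with their natural (non-reduced) structure, are exactly where the support splits into open-and-closed pieces. I would try to handle this by Henselian or finite-morphism arguments: a finite $T$-scheme whose fibres have their points partitioned into closed subsets that are constant in $T$ decomposes locally on $T$. One also has to ensure that the resulting product structure agrees with the virtual structure used in Theorem~\ref{thm:support-flat}.
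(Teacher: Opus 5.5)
Your proposal is correct and is the same argument as the paper's: split a quotient with length vector $\mathbf m$ along its pairwise disjoint supports to get the product decomposition and $\mathcal T=\mathcal T^{\mathrm{sm}}\oplus\bigoplus_j\mathcal T_j$, observe that $\RHom(K,\mathcal T)$ splits into summands supported on the disjoint pieces, and finish with multiplicativity of $c(-)$; your identification $\RHom(K,\mathcal T_j)\simeq\RHom(K_j,\mathcal T_j)$ spells out a step the paper leaves implicit. The paper does not address the scheme-structure issue you flag beyond the reduced-structure convention of Remark~\ref{rem:strata-convention}, and that convention is the right one here: imposing ``length $m_j$ at $p_j$'' pointwise over a non-reduced base only constrains the reduced base, so it yields a formal completion transverse to the stratum rather than a locally closed subscheme.
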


\begin{proof}
Because the supports of the universal quotients on the different pieces are disjoint, specifying a quotient of total length \(m\) with length vector \(\mathbf m\) is equivalent to specifying the corresponding quotients on each piece. This yields the product decomposition of the moduli spaces. The universal quotient is the direct sum of its smooth and punctual pieces, so the descendent Chern characters are additive in \(K\)-theory. The deformation complex \(R\Hom(K,T)\) for the Quot problem also decomposes as the direct sum of the complexes supported on the disjoint pieces. Therefore the virtual tangent class splits, and multiplicative characteristic classes turn this direct-sum decomposition into the asserted product decomposition of integrands.
\end{proof}

\begin{definition}[Additive product-compatible Quot invariant]\label{def:additive-product-compatible-invariant}
An \emph{additive product-compatible Quot invariant with insertions} is a rule \(\mathsf I\) which assigns a number to a locally-closed Quot space together with an insertion class \(\Xi\), such that \(\mathsf I\) is additive for finite locally-closed stratifications and multiplicative for products when the insertion class is an exterior product. In the applications below, \(\mathsf I(X,\Xi)\) is the integral of \(\Xi\) over the virtual class of \(X\), and \(\Xi\) is built from descendent Chern characters and the multiplicative class \(c(T^{\vir})\).
\end{definition}

\begin{corollary}[Exponential factorization on a reduced support curve]\label{cor:reduced-exp}
Let $\mathsf I$ be an additive product-compatible Quot invariant with insertions in the sense of Definition~\ref{def:additive-product-compatible-invariant}. Then the exponential series
\[
Z^{\mathsf I,\exp}_{C,G}(q;\mathbf u)
:=\sum_{m\ge 0}\mathsf I\bigl(\Quot_{G/C}(m),\Xi_{\Quot_{G/C}(m)}(\mathbf u)\bigr)q^m
\]
factorizes as
\[
Z^{\mathsf I,\exp}_{C,G}(q;\mathbf u)
=
Z^{\mathsf I,\exp}_{U,G|_U}(q;\mathbf u)
\cdot \prod_{j=1}^h Z^{\mathsf I,\exp}_{(\OO_{C,p_j},G_{p_j})}(q;\mathbf u).
\]
\end{corollary}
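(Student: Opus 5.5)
The factorization will come from stratifying each coefficient by length vector and applying Proposition~\ref{prop:reduced-factorization} stratum by stratum. Fix \(m\ge0\). The loci \(Q_{\mathbf m}(G)\) of Definition~\ref{def:length-vector-strata}, with \(|\mathbf m|=m\), form a finite decomposition of \(\Quot_{G/C}(m)\) into locally-closed strata. Indeed, for each \(j\) the length of a quotient at \(p_j\) is upper semicontinuous along the universal family, and \(m_0\) is then determined as \(m-\sum_{j\ge1}m_j\). Additivity of \(\mathsf I\) over finite locally-closed stratifications (Definition~\ref{def:additive-product-compatible-invariant}) then gives
\[
\mathsf I\bigl(\Quot_{G/C}(m),\Xi\bigr)=\sum_{|\mathbf m|=m}\mathsf I\bigl(Q_{\mathbf m}(G),\Xi|_{Q_{\mathbf m}(G)}\bigr).
\]
Here \(\Xi|_{Q_{\mathbf m}(G)}\) denotes the restriction of the descendent and virtual-tangent integrand, taken in the same sense as in the stratum sums of Corollary~\ref{cor:factorization}.

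Next apply Proposition~\ref{prop:reduced-factorization} to each stratum. It gives a product decomposition
\[
Q_{\mathbf m}(G)\cong Q^{\mathrm{sm}}_{m_0}(G|_U)\times\prod_j Q^{\mathrm{loc}}_{j,m_j}(G_{p_j}),
\]
and under it the universal quotient splits in \(K\)-theory as a direct sum. Each \(\ch_{k}(\alpha^{[m]})\) is therefore a sum of pullbacks from the factors, and \(c(T^{\vir})\) is an exterior product. To land in the product-compatible setting I would expand the descendent insertions through the formal parameters \(\mathbf u\). That is, read \(\Xi(\mathbf u)\) as the exponential generating form \(\exp\bigl(\sum_i u_i\ch_{k_i}(\alpha_i^{[m]})\bigr)c(T^{\vir})\), so that additivity of the Chern characters turns into multiplicativity of \(\Xi(\mathbf u)\). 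This step explains the word ``exponential'' in the statement. With \(\Xi(\mathbf u)\) an exterior product, the multiplicativity axiom gives
\[
\mathsf I(Q_{\mathbf m},\Xi)=\mathsf I^{\mathrm{sm}}_{m_0}\prod_j\mathsf I^{\mathrm{loc}}_{j,m_j}.
\]

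Finally, multiply by \(q^m=q^{m_0}\prod_j q^{m_j}\) and sum over all \(m\) and all \(\mathbf m\) with \(|\mathbf m|=m\). This is a sum over \(\Z_{\ge0}^{h+1}\) with no constraint, so the sum factors as the product of the single-variable series \(Z^{\mathsf I,\exp}_{U,G|_U}\) and \(Z^{\mathsf I,\exp}_{(\OO_{C,p_j},G_{p_j})}\). The punctual factors are identified with the local series by definition of \(Q^{\mathrm{loc}}_{j,m_j}\). Since the punctual Quot scheme depends only on the completed stalk, as in Lemma~\ref{lem:localtwist}, these factors are well defined in terms of \((\OO_{C,p_j},G_{p_j})\).

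I expect the main obstacle to be the compatibility of \(\mathsf I\) with restriction to strata. A virtual class on \(\Quot_{G/C}(m)\) does not automatically restrict to locally-closed pieces, so the additivity axiom must be applied with the induced classes on the strata. Those induced classes must then be shown to agree with the product of the factor classes. I would handle this through the direct-sum splitting of \(R\Hom(K,T)\) in Proposition~\ref{prop:reduced-factorization}. This splitting identifies the restricted obstruction theory on \(Q_{\mathbf m}(G)\) with the exterior sum of the factor obstruction theories, and so the virtual classes agree under the product isomorphism. I would take that identification as the meaning of ``additive product-compatible''.
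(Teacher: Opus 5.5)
Your proposal is correct and follows the paper's argument, which is exactly to sum the length-vector decomposition of Proposition~\ref{prop:reduced-factorization} over all \(\mathbf m\) and invoke the additivity and multiplicativity axioms of \(\mathsf I\). Your extra points are details the paper leaves implicit rather than a different route: local-closedness of the strata via upper semicontinuity of the length at each \(p_j\), packaging the descendents as \(\exp(\sum_i u_i\ch_{k_i}(\cdot))\) so that the integrand is genuinely an exterior product, and treating compatibility of virtual classes with restriction and products as part of the axioms on \(\mathsf I\).
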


\begin{proof}
Sum the decomposition of Proposition~\ref{prop:reduced-factorization} over all length vectors and use additivity and multiplicativity of $\mathsf I$.
\end{proof}

\subsection{Normalization reduction}

Let $\nu:\widetilde C\to C$ be the normalization, let $D=\nu^{-1}(\Sigma)$, and define
\[
E:=\nu^*G/\textup{tors}
\]
which is a vector bundle on $\widetilde C$.

\begin{theorem}[Inserted normalization reduction]\label{thm:normalization}
For every additive product-compatible Quot invariant with insertions $\mathsf I$ in the sense of Definition~\ref{def:additive-product-compatible-invariant}, one has a factorization
\[
Z^{\mathsf I,\exp}_{C,G}(q;\mathbf u)
=
Z^{\mathsf I,\exp}_{\widetilde C,E}(q;\mathbf u)
\cdot
\prod_{p\in \Sigma}\mathcal L^{\mathsf I,\exp}_{p,G}(q;\mathbf u),
\]
where the local correction factor at $p$ is
\[
\mathcal L^{\mathsf I,\exp}_{p,G}(q;\mathbf u)
:=
Z^{\mathsf I,\exp}_{(\OO_{C,p},G_p)}(q;\mathbf u)
\cdot
\prod_{x\in \nu^{-1}(p)}\Bigl(\mathcal P^{\mathsf I,\exp}_{r_x,\boldsymbol\rho}(q;\mathbf u)\Bigr)^{-1}.
\]
Here
\[
r_x:=\rk_{\OO_{\widetilde C,x}}(E_x)
\]
is the rank of the vector bundle \(E\) on the branch of \(C\) corresponding to \(x\in\nu^{-1}(p)\), and \(\boldsymbol\rho=(\rk(\alpha_i))_i\) records the ranks of the insertion classes.
\end{theorem}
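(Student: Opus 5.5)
Apply Corollary~\ref{cor:reduced-exp} twice: once to $(C,G)$, and once to $(\widetilde C,E)$, in the form of the same disjoint-support factorization on the smooth curve $\widetilde C$ with marked finite set $D$. Then compare the two smooth-locus factors. The key observation is that $\nu$ restricts to an isomorphism $\nu^{-1}(U)=\widetilde C\setminus D\xrightarrow{\sim}U$. Under this isomorphism $E|_{\widetilde C\setminus D}$ corresponds to $G|_U$, since on the smooth locus $G$ is locally free and $\nu^*G$ has no torsion there. Hence
\[
Z^{\mathsf I,\exp}_{U,G|_U}(q;\mathbf u)=Z^{\mathsf I,\exp}_{\widetilde C\setminus D,E|_{\widetilde C\setminus D}}(q;\mathbf u).
\]
The Quot spaces are identified, the universal quotients correspond, and the deformation complexes $R\Hom(K,T)$ agree because they are supported on the open set. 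The descendent insertions $\ch_k(\alpha_i^{[m]})$ also agree, since $\alpha_i$ pulls back along $\nu$ and the quotients have zero-dimensional support away from $D$.

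Next, apply the length-vector stratification of Definition~\ref{def:length-vector-strata} on $\widetilde C$, with the points $x\in D$ playing the role of the singular points. The proof of Proposition~\ref{prop:reduced-factorization} only used disjointness of supports, so it applies verbatim. It gives
\[
Z^{\mathsf I,\exp}_{\widetilde C,E}=Z^{\mathsf I,\exp}_{\widetilde C\setminus D,E}\cdot\prod_{x\in D}Z^{\mathsf I,\exp}_{(\OO_{\widetilde C,x},E_x)}.
\]
Since $\OO_{\widetilde C,x}$ is a DVR and $E_x\cong\OO_{\widetilde C,x}^{\oplus r_x}$, each local factor depends only on $r_x$ and the insertion data. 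This is where the notation $\mathcal P^{\mathsf I,\exp}_{r_x,\boldsymbol\rho}$ comes from. Solving for the smooth-locus factor and substituting into Corollary~\ref{cor:reduced-exp} for $C$ yields the claimed formula, with $\mathcal L_{p,G}$ as displayed. The inverse is legitimate because each local series has constant term $1$, coming from the length-zero quotient.

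The main obstacle is justifying that $\mathcal P$ depends only on $(r_x,\boldsymbol\rho)$, and not on the point, the curve, or the global classes $\alpha_i$. For the punctual Quot scheme of $\OO^{\oplus r}$ at a smooth curve point, the universal quotient $\mathcal T$ is supported on $\{\text{pt}\}\times\{x\}$. I would argue as follows.

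First, $\mathcal T\otimes\pi_S^*\alpha_i$ has the same $K$-class as $\rk(\alpha_i)\cdot\mathcal T$. A vector bundle restricted to a formal neighbourhood of a point is trivial, and since $\mathcal T$ is set-theoretically supported over $x$, $\alpha_i$ can be trivialized near the support.

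Second, the punctual scheme, its obstruction theory $R\Hom(K,T)$, and hence $c(T^{\vir})$ depend only on the completion $\widehat{\OO}_{\widetilde C,x}\cong\C[[t]]$ and on $\widehat E_x\cong\C[[t]]^{\oplus r_x}$. This is the same style of argument as Lemma~\ref{lem:localtwist}.

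Together these show that the local exponential series is the same universal series for all $x$ with the same $r_x$, which completes the argument.
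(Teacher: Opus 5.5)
Your proposal is correct and takes the same route as the paper. You apply Corollary~\ref{cor:reduced-exp} to $(C,G)$, identify the smooth-locus factor with the open factor on $\widetilde C\setminus D$ via $\nu$, and recover that open factor from the proper series $Z^{\mathsf I,\exp}_{\widetilde C,E}$ by dividing out the branch factors $\mathcal P^{\mathsf I,\exp}_{r_x,\boldsymbol\rho}$ at the points of $D$. You also make explicit two points the paper leaves implicit: why each branch factor depends only on $(r_x,\boldsymbol\rho)$, and why it is invertible (its constant term is $1$, coming from the length-zero quotient).
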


\begin{proof}
Apply Corollary~\ref{cor:reduced-exp} to the reduced curve $C$. The smooth-locus factor is then compactified by the normalization: quotients on $U=C^{\mathrm{sm}}$ are the same as quotients on $\widetilde C\setminus D$, and this open-curve factor is obtained from the proper curve factor on $\widetilde C$ by dividing by the universal smooth-point branch factors contributed by the points of $D$. Multiplying the local punctual singular terms back in yields the displayed formula.
\end{proof}

\section{Rationality of the smooth curve factor}\label{sec:smooth-factor}

\begin{definition}[Proper smooth-curve inserted series]\label{def:smooth-curve-series}
For a smooth projective curve $C$, a vector bundle $E$ on $C$, and the fixed finite insertion data consisting of descendent and tangent insertions, write
\[
\mathcal Z^{\exp}_{C,E}(q;\mathbf u)
:=\sum_{m\ge 0} q^m\int_{\Quot_{E/C}(m)} \Xi_{\Quot_{E/C}(m)}(\mathbf u)
\]
for the integrated exponential series.
\end{definition}

\begin{proposition}[Deformation invariance]\label{prop:definv}
In a connected family of vector bundles on a fixed smooth projective curve, the series $\mathcal Z^{\exp}_{C,E}(q;\mathbf u)$ is constant.
\end{proposition}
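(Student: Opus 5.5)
The plan is to realize the whole connected family at once as a single relative Quot scheme, and then use deformation invariance of virtual integrals together with the fact that every insertion is the restriction of a global class on the total space.

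\emph{Step 1: the total space.} Let \(B\) be a connected base (we may reduce to \(B\) a smooth connected curve, since any two points of a connected finite-type scheme are joined by a chain of irreducible curves, which can be normalized). Let \(\mathcal E\) be a family of vector bundles on \(C\times B\) with fibres \(E_b\). Form the relative Quot scheme
\[
\mathcal Q_m:=\Quot_{\mathcal E/C\times B/B}(m)\xrightarrow{\;f\;}B .
\]
It is projective over \(B\) by Grothendieck's theorem. Its fibre over \(b\) is \(\Quot_{E_b/C}(m)\).

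\emph{Step 2: the fibres are smooth of constant dimension.} On a fibre, a point is \(0\to K\to E_b\to T\to0\) with \(T\) zero-dimensional on the smooth curve \(C\). Then \(\Ext^1_C(K,T)=H^1(C,K^\vee\otimes T)=0\), because \(K\) is locally free and \(T\) has zero-dimensional support. Hence each fibre is smooth of dimension \(\chi(K,T)=rm\), where \(r=\rk E\). The relative obstruction theory \(\bigl(Rq_*\RHom(\mathcal K,\mathcal T)\bigr)^\vee\) is therefore a vector bundle of rank \(rm\). So \(f\) is smooth of relative dimension \(rm\). In particular \(f\) is flat and proper, and the fibre fundamental classes are the Gysin restrictions \(b^![\mathcal Q_m]\).

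\emph{Step 3: the integrand extends globally.} The universal quotient \(\mathcal T\) on \(\mathcal Q_m\times_B(C\times B)\) gives descendent classes \(\ch_{k_i}\bigl(Rq_*(\mathcal T\otimes\alpha_i)\bigr)\). The relative tangent bundle \(T_f=Rq_*\mathcal H om(\mathcal K,\mathcal T)\) gives \(c(T_f)\). Both are defined on all of \(\mathcal Q_m\), and they restrict on each fibre to the descendent insertions and to \(c(T_{\Quot_{E_b/C}(m)})\). Therefore each coefficient of \(\mathcal Z^{\exp}_{C,E_b}\) equals \(\int_{b^![\mathcal Q_m]}\Xi\). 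This number is independent of \(b\): on a smooth connected curve all points are rationally equivalent in degree zero, so \(f_*\bigl(\Xi\cap[\mathcal Q_m]\bigr)\in A_0(B)\), taken in the correct degree, has constant degree after pulling back to points. Equivalently, the fibres are homologous in the total space. Since this holds for every \(m\), the whole series is constant.

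\emph{Main obstacle.} The delicate point is Step 1 in the generality of "connected family". We must ensure the family is given by an honest sheaf \(\mathcal E\) on \(C\times B\) rather than only a map to the stack of bundles. If it is only a map to the stack, I would pass to an étale cover or smooth atlas. The Quot scheme and its insertions are insensitive to twisting \(\mathcal E\) by a line bundle pulled back from \(B\), so the argument descends.

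I would also check the smoothness claim in Step 2 scheme-theoretically, not merely on closed points. Flatness of \(f\) is what makes \(b^!\) agree with the fibre fundamental class. It follows from the vanishing of the relative obstruction sheaf by the infinitesimal lifting criterion.
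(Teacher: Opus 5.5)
Your proposal is correct and follows the same route as the paper. The paper only asserts that the relative Quot scheme over the family is smooth and projective of relative dimension $m\rk(E)$, and that the descendent and tangent insertions are restrictions of global classes, so fibre integrals are constant; you supply the omitted details, namely the vanishing $\Ext^1_C(K,T)=H^1(C,K^\vee\otimes T)=0$, the global integrand, and the reduction to a smooth connected curve base.

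One small slip in Step~3: points of a positive-genus curve are not rationally equivalent, and $f_*(\Xi\cap[\mathcal Q_m])$ lies in $A_1(B)=\Z\cdot[B]$ rather than $A_0(B)$. The correct justification is conservation of number (Fulton, \S10.2): $\deg b^!(\Xi\cap[\mathcal Q_m])=\deg b^!f_*(\Xi\cap[\mathcal Q_m])$ is the same integer for every $b$. This is what your remark that the fibres are homologous in the total space amounts to.
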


\begin{proof}
Relative Quot over a family of vector bundles on a smooth curve is smooth and projective of relative dimension $m\rk(E)$. The tautological and tangent classes form a global class on the relative Quot space, so fibrewise integrals are constant in connected families by the proper smooth base-change formalism.
\end{proof}

\begin{lemma}[Reduction to split bundles]\label{lem:splitbundle}
Every vector bundle on a smooth projective curve can be connected in a flat family to a direct sum of line bundles of the same total degree. Consequently, by Proposition~\ref{prop:definv}, the rationality of $\mathcal Z^{\exp}_{C,E}(q;\mathbf u)$ may be proved after replacing $E$ by a split bundle.
\end{lemma}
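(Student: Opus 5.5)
The plan has two parts: a deformation to a split bundle, and then the application of Proposition~\ref{prop:definv} as already stated.

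\textbf{Constructing the family.} Let \(E\) have rank \(r\) and degree \(d\) on the smooth projective curve \(C\). By induction on \(r\), \(E\) has a filtration
\[
0=E_0\subset E_1\subset\cdots\subset E_r=E
\]
by subbundles whose successive quotients \(L_i=E_i/E_{i-1}\) are line bundles. To get it, pick a line subbundle of \(E\), e.g.\ the saturation of the image of \(M\to E\) for any line bundle \(M\) with \(\Hom(M,E)\neq0\); such \(M\) exists once \(M\) is sufficiently negative. The quotient by a saturated subsheaf is again locally free because \(C\) is a smooth curve, so the induction continues. Then \(\sum\deg L_i=d\).

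\textbf{The Rees-type degeneration.} Over \(\mathbb A^1\) with coordinate \(t\), consider the subsheaf
\[
\mathcal E:=\sum_i t^{-i}E_i[t]\subset E\otimes_{\C}\C[t,t^{-1}]
\]
on \(C\times\mathbb A^1\). Equivalently, iterate the one-step case \(0\to E'\to E\to L\to0\) with extension class \(e\in\Ext^1(L,E')\): the family of extensions with class \(te\) gives \(E\) for \(t\neq0\) and \(E'\oplus L\) at \(t=0\). Either construction yields a flat family of vector bundles, since each fibre is locally free of rank \(r\) and degree \(d\) and the base is \(\mathbb A^1\). This family has generic fibre \(\cong E\) and special fibre \(\bigoplus_i L_i\). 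The base \(\mathbb A^1\) is connected, so Proposition~\ref{prop:definv} gives
\[
\mathcal Z^{\exp}_{C,E}=\mathcal Z^{\exp}_{C,\oplus L_i}.
\]
For this to apply, the insertion data \(\mathbf u\) must be defined uniformly over the family, which holds because they are built from the universal quotient and the relative tangent bundle.

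\textbf{Main obstacle.} The delicate point is the ``connected family'' hypothesis of Proposition~\ref{prop:definv}. The relative Quot scheme over the family must be smooth and proper over the base. Smoothness holds because \(\Ext^1_C(K,T)=0\) for a zero-dimensional \(T\) and locally free \(K\) on a curve, so each fibre has dimension \(m\,r\). Properness holds because relative Quot schemes are projective. With these, integrals of the global classes are constant in \(t\). It then follows that rationality for \(E\) reduces to rationality for the split bundle \(\bigoplus_i L_i\).
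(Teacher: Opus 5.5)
Your argument is essentially the paper's: filter $E$ by subbundles with line-bundle quotients, scale the extension classes to zero, and apply Proposition~\ref{prop:definv}. Your iterated one-step construction, the extension of $p^*L$ by $p^*E'$ with class $te\in\Ext^1(L,E')\otimes\C[t]$ iterated along the filtration, is exactly the paper's argument and is correct. Flatness there is immediate because an extension of flat sheaves is flat, and a chain of $\mathbb A^1$-families is enough for the connectedness needed in Proposition~\ref{prop:definv}.

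The displayed Rees sheaf, however, is wrong as written. Since $E_r=E$ carries the most negative power of $t$, you have $t^{-i}E_i[t]=t^{-r}\,t^{r-i}E_i[t]\subset t^{-r}E[t]$ for every $i$. Hence $\sum_i t^{-i}E_i[t]=t^{-r}E[t]\cong E[t]$, which is the constant family with special fibre $E$, not $\bigoplus_i L_i$. That sign convention belongs to a decreasing filtration. For your increasing filtration the correct sheaf is $\sum_i t^{i}E_i[t]=\bigoplus_{j\ge 0}t^jE_{\min(j,r)}$. It is $t$-torsion-free, hence flat over $\C[t]$, and it equals $E[t,t^{-1}]$ away from $t=0$. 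Its special fibre is $\mathcal E/t\mathcal E\cong\bigoplus_{j=1}^r E_j/E_{j-1}=\gr E$.

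Your stated reason for flatness (fibres locally free of fixed rank and degree) also presupposes knowing the fibres, which is exactly what fails for the formula as written. With the exponent corrected, or with the Rees construction simply dropped in favour of the iterated extensions, the proof is complete. Your closing paragraph on smoothness and properness of the relative Quot scheme just re-proves Proposition~\ref{prop:definv}, which the lemma is entitled to cite.
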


\begin{proof}
Choose a filtration of $E$ by subbundles with line bundle quotients. The successive extension classes may be scaled to zero in one-parameter families, producing a flat family connecting $E$ to the associated graded bundle, which is split.
\end{proof}

\begin{theorem}[Rationality of the proper smooth factor]\label{thm:smoothfactor}
For every smooth projective curve $C$ and every vector bundle $E$ on $C$, the series $\mathcal Z^{\exp}_{C,E}(q;\mathbf u)$ is rational in $q$.
\end{theorem}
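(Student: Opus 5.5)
By Lemma~\ref{lem:splitbundle} and Proposition~\ref{prop:definv}, we may assume $E=L_1\oplus\cdots\oplus L_r$ is split. The plan is to compute $\mathcal Z^{\exp}_{C,E}(q;\mathbf u)$ by torus localization on the smooth projective variety $\Quot_{E/C}(m)$, whose virtual class is the fundamental class. Let $T=\Gm^r$ act on $E$ by scaling the summands. The $T$-fixed quotients are direct sums of quotients $L_i\twoheadrightarrow L_i|_{D_i}$ with $D_i$ effective divisors. Hence the fixed locus of $\Quot_{E/C}(m)$ is $\bigsqcup_{m_1+\cdots+m_r=m}\prod_i C^{[m_i]}$, with $C^{[m_i]}=\Sym^{m_i}C$.

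On a fixed component the universal quotient is $\bigoplus_i \OO_{\mathcal D_i}\otimes L_i\otimes\mathfrak t_i$. Each descendent $\ch_{k}(\alpha^{[m]})$ restricts to a polynomial in the equivariant weights with coefficients in $H^*(\prod_i \Sym^{m_i}C)$. These coefficients come from $\ch(\OO_{\mathcal D_i}\otimes L_i\otimes\alpha)$ pushed forward via Grothendieck--Riemann--Roch. The tangent space $\Hom(K,T)$ splits into a fixed part, the tangent bundles of the $\Sym^{m_i}C$, and a moving part $\bigoplus_{i\ne j}R\Hom(L_i(-D_i),\OO_{D_j}\otimes L_j)\otimes\mathfrak t_j\mathfrak t_i^{-1}$. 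Both $c(T^{\vir})$ and the inverse Euler class of the moving part are therefore expressed through the standard classes on symmetric products. These are the class $x$ (the pullback of a point class under $C^{[d]}\to\Pic$ composed appropriately) and $\theta$, together with classes $L_i|_{\mathcal D_j}$ computed from the universal divisor. Since the integral is independent of the equivariant parameters, we may evaluate at a generic specialization of the weights.

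The key structural input is a Macdonald-type evaluation. For integrands of this shape, namely products of universal multiplicative factors over pairs $(i,j)$ times polynomial insertions in $\ch$ of tautological sheaves, the generating series $\sum_{\mathbf m}\prod_i q^{m_i}\int_{\prod C^{[m_i]}}(\cdots)$ is governed by the universal-series structure. It takes the form $A(q)^{\chi(C)}B(q)^{\deg E}$, multiplied by explicit polynomial corrections coming from the insertions, with $A$ and $B$ algebraic functions of $q$ determined by a change of variables (as in the EGL/Oprea--Pandharipande computations for curves). To establish rationality, the plan is to follow the approach of Oprea--Pandharipande for Quot schemes of curves. First write the symmetric product integrals via $\int_{C^{[d]}}x^a\theta^b=\frac{g!}{(g-b)!}$ on the relevant range. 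Then rewrite the $d$-sums as residues. Finally, apply Lagrange inversion: the resulting series is a rational function of $q$ and of the roots $z_j(q)$ of a polynomial equation, symmetric in these roots, and hence rational in $q$. The finitely many insertions contribute polynomially many derivatives, which preserve this symmetric-rational structure.

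The main obstacle is this last step. One must show that the symmetrization over the branches $z_j(q)$ genuinely produces a rational function once the factor $c(T^{\vir})$ is included, since $c(\cdot)$ rather than the Euler class introduces extra $1+$ factors in the moving part. I would first check the rank-one case $E=L$, where $\Quot_{L/C}(m)=C^{[m]}$ and rationality follows directly from Macdonald's formula for $\sum q^m c(TC^{[m]})\cdots$. I would then handle general $r$ by showing that the residue integrand is a rational function in the variables $(q,z)$ with $z$ ranging over the roots of a polynomial, and conclude by symmetric function theory.
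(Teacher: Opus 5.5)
Your set-up matches the paper: split reduction, $\Gm^r$-localization, fixed loci $\prod_i\Sym^{m_i}C$, and a moving part of rank $(r-1)m$. The step that is supposed to prove rationality, however, is not carried out. You assert a universal form $A(q)^{\chi(C)}B(q)^{\deg E}$ with algebraic $A,B$. You then describe a residue and Lagrange-inversion computation whose output is ``symmetric in the roots $z_j(q)$, hence rational'', and you flag exactly this as the main obstacle. That conclusion is not automatic. Lagrange inversion singles out the branches with $z(0)=0$, and an expression in a proper subset of the roots of a polynomial over $\Q(q)$ is in general only algebraic. You would have to prove that the fixed-point sum is symmetric in \emph{all} roots, and nothing in the proposal does this. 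So the argument stops before the content of the theorem. (Also, $x\in H^2(\Sym^dC)$ is the class of $\{D: D\ni p\}$; the pullback of a point from $\Pic^d(C)$ has codimension $g$.)

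The missing idea is a degree count that makes algebraic functions unnecessary. It is what underlies the paper's much shorter assertion that each fixed-locus integral is eventually polynomial in $(m_1,\dots,m_r)$.
\begin{itemize}
\item Since $\dim\Quot_{E/C}(m)=rm$ and the descendents have fixed total degree $K=\sum_i k_i$, only $c_{rm-K}(T)$ contributes.
\item On $F=\prod_i\Sym^{m_i}C$ one has $c^T_{rm-K}(T)|_F=\sum_j c_{rm-K-j}(T_F)\,c^T_j(N)$. Because $\dim F=m$ and $\rk N=(r-1)m$, nonvanishing of $c_{rm-K-j}(T_F)$ forces $j\ge\rk N-K$.
\item Hence $c^T_j(N)/e^T(N)=e_{\rk N-j}(\omega_1^{-1},\omega_2^{-1},\dots)$ is a polynomial of degree at most $K$ in the power sums $\sum_k\omega_k^{-u}$ of the equivariant roots.
\item The localized integrand is homogeneous of degree exactly $\dim F$. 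So the expansion of $(w+\rho)^{-u}$ truncates at $\rho$-degree at most $K$. Only $\ch_d(N)$ with $d\le K$ enter, and by GRR these have coefficients polynomial in the $m_i$.
\end{itemize}
The ``extra $1+$ factors'' you worry about therefore never appear.

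It remains to integrate against $c_{m-e}(T_F)$ with $e\le K$. Macdonald's formula $c(T_{\Sym^mC})=(1+x)^{m-2g+1}e^{-\theta/(1+x)}$, together with the Abel--Jacobi projective-bundle description of the intersection numbers, produces binomial coefficients $\binom{m_i-u}{m_i-v}$ with $u,v$ bounded. These are eventually polynomial in each $m_i$. The multivariate series is therefore rational, and so is its diagonal $z_1=\cdots=z_r=q$. Your rank-one Macdonald check is the $r=1$ case of this argument.
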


\begin{proof}
By Lemma~\ref{lem:splitbundle}, we may assume
\[
E=L_1\oplus\cdots\oplus L_r.
\]
The torus $T=(\Gm)^r$ acts on $E$ and therefore on every Quot scheme $\Quot_{E/C}(m)$. The fixed loci are indexed by compositions $m_1+\cdots+m_r=m$ and are products
\[
\Sym^{m_1}(C)\times\cdots\times \Sym^{m_r}(C).
\]
Virtual localization reduces the coefficient of $q^m$ to a finite sum of integrals of tautological classes on such products. For \(m_a\ge 2g(C)-1\), the Abel--Jacobi map
\[
\AJ_{m_a}:\Sym^{m_a}(C)\to \Pic^{m_a}(C)
\]
is a projective bundle: more precisely,
\[
\Sym^{m_a}(C)\cong \mathbb P_{\Pic^{m_a}(C)}(\mathcal E_{m_a})
\]
for the vector bundle \(\mathcal E_{m_a}:=p_{2*}\mathcal P_{m_a}\), where \(\mathcal P_{m_a}\) is a Poincare bundle on \(C\times\Pic^{m_a}(C)\) after choosing the standard normalization. By the projective bundle formula, integration over \(\Sym^{m_a}(C)\) is computed by pushing powers of the hyperplane class to Segre classes of \(\mathcal E_{m_a}\) on \(\Pic^{m_a}(C)\). Grothendieck--Riemann--Roch computes the Chern character of \(\mathcal E_{m_a}\) as a polynomial in \(m_a\). Therefore each such integral is polynomial in \(m_1,\dots,m_r\) for sufficiently large multidegree. Hence the corresponding multivariate generating series is rational, and so is its diagonal specialization in the variable $q$. Finite sums of such rational series are rational.
\end{proof}

\section{Rationality of the local singular factors}\label{sec:local-singular-factors}

Let \((R,M)\) be a completed local pair arising from a reduced curve singularity and a torsion-free module. Let
\[
\widetilde R\cong \prod_{i=1}^{b(R)} \C[[t_i]]
\]
be the normalization. The \emph{conductor} of \(R\subset \widetilde R\) is
\begin{equation}\label{eq:conductor-definition}
\mathfrak c:=\operatorname{Ann}_R(\widetilde R/R)=\{r\in R:\ r\widetilde R\subset R\}.
\end{equation}
Since \(R\) is a reduced complete curve singularity, \(\widetilde R/R\) has finite length, and therefore \(\mathfrak c\) contains a nonzero ideal of \(\widetilde R\). Choose a conductor exponent \(e\) such that
\[
\prod_{i=1}^{b(R)} t_i^e\C[[t_i]]\subset \mathfrak c\widetilde R.
\]
Set \(\widetilde M:=M\otimes_R\widetilde R\), and view \(M\subset\widetilde M\).

\begin{definition}[Conductor-boundary data]\label{def:conductor-boundary-data}
For a finite-colength submodule \(L\subset M\), define
\[
M^f:=M\cap \left(\prod_i t_i^e\right)\widetilde M,
\qquad
L^e:=L+M^f\subset M.
\]
The \emph{bounded core modules} are
\[
C_L:=M/L^e,
\qquad
D_L:=L^e/M^f.
\]
They are called bounded because both are subquotients of the fixed finite-length module \(M/M^f\). The branch lengths are the lengths of the induced quotients of the free \(\C[[t_i]]\)-modules appearing in \(\widetilde M/M^f\). The branch rank on the \(i\)-th branch is the rank of the corresponding free \(\C[[t_i]]\)-module.
\end{definition}

\begin{proposition}[Conductor-boundary decomposition]\label{prop:padding}
After a finite stratification by the isomorphism types of the bounded core modules \(C_L\) and \(D_L\) of Definition~\ref{def:conductor-boundary-data}, the local exponential series
\[
Z^{\mathsf I,\exp}_{(R,M)}(q;\mathbf u)
\]
may be written as a finite sum of terms of the form
\begin{equation}\label{eq:conductor-boundary-decomposition}
q^{a_\lambda}\cdot
\left.\mathcal D_\lambda\!\left(\prod_{i=1}^{b(R)}
\mathcal P^{\mathsf I,\exp}_{s_{\lambda,i},\boldsymbol\rho}(z_i;\mathbf u)
\right)\right|_{z_1=\cdots=z_{b(R)}=q}.
\end{equation}
Here \(a_\lambda=\length(C_L)\) on the stratum indexed by \(\lambda\), so \(a_\lambda\) is bounded as \(\lambda\) ranges over the finite set of core types. The integer \(s_{\lambda,i}\) is the branch rank on the \(i\)-th normalized branch for that core type, and is bounded for the same reason. Finally, \(\mathcal D_\lambda\) is a finite \(\Q[[\mathbf u]]\)-linear combination of monomials in the operators
\[
z_i\frac{\partial}{\partial z_i}
\qquad (1\le i\le b(R)),
\]
followed by finitely many initial corrections. This is the conductor decomposition for local Quot spaces used by Huang--Jiang, with the descendent and virtual-tangent insertions recorded by the differential operator \(\mathcal D_\lambda\); compare \cite[\S3]{HuangJiang}.
\end{proposition}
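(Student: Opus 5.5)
The plan is to stratify the local punctual Quot scheme of \((R,M)\) according to the conductor-boundary data of Definition~\ref{def:conductor-boundary-data}, and then identify each stratum with a bundle over a fixed finite-type space whose fibres are products of branch-wise smooth punctual Quot schemes. Since the fibres see only the free \(\C[[t_i]]\)-modules, the insertions on them reduce to the universal series \(\mathcal P^{\mathsf I,\exp}_{s,\boldsymbol\rho}\).

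First, I will record finiteness. Every finite-colength submodule \(L\subset M\) satisfies \(M^f\subset L^e\subset M\). The pair \((C_L,D_L)\) is therefore determined by the image of \(L\) in the fixed finite-length module \(M/M^f\). The submodules of \(M/M^f\) form a projective variety, namely a finite union of Quot schemes of \(M/M^f\). Stratifying by the finitely many possible lengths and module isomorphism types is a finite locally-closed stratification, refining by the constructible isomorphism-type loci if needed. The stratum index \(\lambda\) is then the isomorphism type of the pair \((C_L,D_L)\), together with the induced branch data on \(\widetilde M/M^f\). For each \(\lambda\) the number \(a_\lambda=\length(C_L)\) is bounded by \(\length(M/M^f)\). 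By additivity of \(\mathsf I\) under locally-closed stratifications (Definition~\ref{def:additive-product-compatible-invariant}), the series \(Z^{\mathsf I,\exp}_{(R,M)}\) splits as a finite sum over \(\lambda\).

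Second, on a fixed stratum I will separate \(L\) into its core part and its deep part. The submodule \(L\cap M^f\) is a finite-colength submodule of \(M^f\). Since \(M^f\) lies in \(\prod_i t_i^e\widetilde M\) and is contained in the conductor, it is an \(\widetilde R\)-module up to a bounded correction. That gives \(M^f\) a free structure \(\bigoplus_i \C[[t_i]]^{s_{\lambda,i}}\), and sub-\(R\)-modules deep inside it are the same as sub-\(\widetilde R\)-modules. The quotient \(M^f/(L\cap M^f)\) therefore decomposes branch-wise into punctual quotients of free \(\C[[t_i]]\)-modules of rank \(s_{\lambda,i}\). The total colength of \(L\) is \(a_\lambda\) plus the branch lengths, which produces the prefactor \(q^{a_\lambda}\) and the product of \(z_i\)-series specialized to \(z_i=q\).

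Third, I will treat the twisting. The remaining data on the stratum are the core quotient and a gluing, i.e.\ an extension-type datum relating \(D_L\) to \(L\cap M^f\). This datum is an affine-bundle-type fibration over the product of the core stratum and the branch punctual Quot schemes. Its effect on the universal quotient \(\mathcal T\) and on \(T^{\vir}=R\Hom(K,\mathcal T)\) is a bounded K-theoretic correction. On the branch factors, the correction is polynomial in the branch lengths, because the gluing data live in the fixed module \(M/M^f\) while the deep quotients only shift in length. Polynomial dependence of the coefficients on \(m_i\) is exactly what the operators \(z_i\partial_{z_i}\) with \(\Q[[\mathbf u]]\) coefficients implement. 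Small lengths, where the deep part is not yet in the stable regime past the conductor exponent, give the finitely many initial corrections.

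The main obstacle is this third step. One must show that the fibration of a stratum over its core data has fibrewise contributions to \(\mathsf I\) which are exactly polynomial modifications of the product of branch integrands, rather than merely of the right dimension. I would first attempt this using the Huang--Jiang conductor argument \cite[\S3]{HuangJiang}, which gives the point-count and motivic version. I would then upgrade it to virtual integrals via Proposition~\ref{prop:reduced-factorization}-style K-theoretic additivity of \(\mathcal T\) along the filtration \(L\cap M^f\subset L\subset M\), combined with the Grothendieck--Riemann--Roch polynomiality argument already used in Proposition~\ref{prop:sheafpoly}.
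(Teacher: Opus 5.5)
Your overall architecture is the paper's: stratify by core type, fibre each stratum over a product of smooth-branch punctual Quot schemes, split $[T]$ into a branch part and a bounded part, and turn polynomial dependence on branch lengths into the operators $z_i\partial_{z_i}$. For your third step, the paper's concrete device is to choose $c$ annihilating $C_L$ and $D_L$, so that every cross-term factors through $P/t_i^cP$ and depends only on $\min(n_i,c)$.

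The gap is in your second step, which is the only place where the branch series $\mathcal P^{\mathsf I,\exp}_{s,\boldsymbol\rho}$ enter. One part of that step is fine: $M^f=(\prod_i t_i^e)\widetilde M$ is an $\widetilde R$-module, branchwise free of ranks independent of $\lambda$, because $(\prod_i t_i^e)\widetilde R\subset\mathfrak c\subset R$; no bounded correction is needed. But finite-colength $R$-submodules of $M^f$ are in general not $\widetilde R$-submodules, at any depth. So $M^f/(L\cap M^f)$ has no branchwise decomposition. Take the node $R=\C[[x,y]]/(xy)$, $\widetilde R=\C[[t_1]]\times\C[[t_2]]$, $M=R$. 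Then $\mathfrak c=\mathfrak m$, $e=1$, $M^f=\mathfrak m$, and $\mathfrak m^k=(t_1^k,t_2^k)\widetilde R$. Since $\mathfrak m$ kills $\mathfrak m^k/\mathfrak m^{k+1}\cong\C^2$, every line in it defines an $R$-submodule of colength one in $\mathfrak m^k$. This gives a $\mathbb P^1$ of them, and only the two coordinate lines are $\widetilde R$-stable. For $k=1$ these lines are exactly the colength-two ideals of $R$. There $C_L=\C$, $D_L=0$ and $L\cap M^f=L$, so there is no gluing datum. Your description therefore returns two points where the actual stratum is a $\mathbb P^1$.

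What is missing is an $\widetilde R$-lattice canonically attached to $L$, for instance $N:=\widetilde R\,(L\cap M^f)\subset M^f$, which is a genuine point of the product of branchwise punctual Quot schemes. Because $\mathfrak c$ is an ideal of $\widetilde R$, one has $\mathfrak cN=\mathfrak c(L\cap M^f)\subset L\cap M^f\subset N$. Hence $L\cap M^f$ is recovered from $N$ together with an $R$-submodule of $N/\mathfrak cN\cong M^f/\mathfrak cM^f$, a module of fixed finite length. This discrepancy datum has to be added to the core data, and its length to the exponent, which is then no longer $\length(C_L)$ alone.

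Moreover, the fibres of $L\mapsto N$ are not affine spaces in general. In the node example, the colength-two ideals with $N=\mathfrak m$ form $\mathbb P^1$ minus the two coordinate points. So your third step has to compute the contribution of a bundle with fixed but non-affine fibre, including how $N/(L\cap M^f)$ enters the descendents and $c(T^{\vir})$, not merely record a dimension.

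Separately, the finiteness of isomorphism types claimed in your first step is not automatic. For $R=\C[[x,y]]/(xy(x-y))$ and $M=R$ one has $\mathfrak c=\mathfrak m^2\supseteq M^f$. For $L=\ell+\mathfrak m^2$ with $\ell$ a line in $\mathfrak m/\mathfrak m^2$, the modules $C_L=R/L$ are pairwise non-isomorphic. One must therefore stratify by a discrete constructible invariant such as lengths; the statement itself takes this finiteness for granted.
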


\begin{proof}
Fix a boundary stratum on which the finite-length core modules \(C_L\) and \(D_L\) have constant isomorphism type. The conductor decomposition identifies the corresponding punctual Quot stratum with an affine bundle over a product of smooth-branch punctual Quot schemes. More explicitly, the base parametrizes branchwise punctual quotients of ranks \(s_{\lambda,i}\) and branch lengths \(n_i\), and the fibre is an affine space whose dimension is affine-linear in the branch lengths \(n_i\).

The total quotient is obtained from the branch quotient and the bounded core data by successive extensions. In \(K\)-theory this gives
\[
[T]=[P]+[C],
\]
where \(P\) is the direct sum of the smooth-branch quotients and \(C\) is a bounded core quotient. Therefore descendent Chern characters split into the sum of a branch contribution and a bounded contribution determined by the core type. For the virtual tangent class, the local deformation complex is the sum of the smooth-branch deformation complexes, the tangent bundle of the affine-bundle fibre, and finitely many cross-terms involving the bounded modules \(C_L,D_L\) and the smooth-branch quotient \(P\).

Choose \(c\gg 0\) annihilating the bounded modules \(C_L\) and \(D_L\) on the chosen stratum. Then every cross-term factors through the branch truncations \(P/t_i^cP\), so its \(K\)-class depends only on the finitely many truncated lengths \(\min(n_i,c)\). After splitting into finitely many regions according to whether \(n_i<c\) or \(n_i\ge c\), every cross-term becomes polynomial in the branch lengths \(n_i\). Multiplication by a polynomial in the variables \(n_i\) is encoded in generating series by applying a polynomial in the differential operators \(z_i\partial_{z_i}\). Summing over the finitely many boundary strata gives \eqref{eq:conductor-boundary-decomposition}.
\end{proof}

\begin{theorem}[Rationality of the universal smooth-point local series]\label{thm:smoothlocal}
For every rank $r\ge 0$ and every rank vector $\boldsymbol\rho$, the universal smooth-point local exponential series
\[
\mathcal P^{\mathsf I,\exp}_{r,\boldsymbol\rho}(q;\mathbf u)
\]
is rational in $q$.
\end{theorem}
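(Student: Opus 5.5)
The plan is to compute the universal smooth-point local series by torus localization on the punctual Quot scheme of the free module $\C[[t]]^{\oplus r}$, and then to show that the resulting coefficients are quasi-polynomial in the length.

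First we identify the relevant space. At a smooth point, the local series $\mathcal P^{\mathsf I,\exp}_{r,\boldsymbol\rho}$ is the generating series, over $m\ge0$, of the invariant $\mathsf I$ applied to the punctual Quot scheme $\Quot_{\C[[t]]^{\oplus r}}(m)_0$. This scheme parametrizes length-$m$ quotients of $\C[[t]]^{\oplus r}$ supported at $t=0$. The descendent insertions only see the ranks $\boldsymbol\rho$ of the $\alpha_i$, since $\widehat{\alpha_i}$ is free at the point, and the tangent insertion is built from $R\Hom(K,T)$.

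Next we localize. The torus $T=\Gm\times(\Gm)^r$ acts, with the first factor scaling $t$ and $(\Gm)^r$ scaling the summands. Its fixed points are isolated: they are the quotients $\bigoplus_a \C[[t]]/t^{m_a}$ with $\sum_a m_a=m$. The deformation complex restricted to a fixed point has $K$-class $\sum_{a,b}\bigl(\chi(\C[[t]],\C[[t]]/t^{m_b})-\chi(t^{m_a}\C[[t]],\C[[t]]/t^{m_b})\bigr)$ with the appropriate equivariant weights. These are explicit sums of characters $w_b w_a^{-1}\lambda^{j}$, with $j$ running over an interval whose endpoints are affine-linear in $m_a$ and $m_b$. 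The descendent classes $\ch_k$ of $\bigoplus_a \C[[t]]/t^{m_a}$ are likewise power sums over $j<m_a$ of $(w_a+j\lambda)$, and these are polynomial in $m_a$. We would normalize $\mathsf I$ as an equivariant residue, or equivalently take the non-equivariant limit where it exists: the punctual scheme is proper, and $c(T^{\vir})$ is a polynomial class, so the integral equals the sum over fixed points.

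The contribution of each fixed point is then a rational function of the weights. In it, products of the form $\prod_{j}(x+j\lambda)$ appear, both in $c(T^{\vir})$ and in the Euler class of the normal bundle. The main obstacle is controlling these products, whose number of factors grows with $m$. The approach we would try first is to avoid pointwise evaluation and instead use the structure of the punctual Quot scheme. It admits a cell/affine-bundle decomposition by the sequence of lengths of $\ker(t)$-filtrations. Equivalently, following the conductor argument of Proposition~\ref{prop:padding} with $e=0$, quotients of $\C[[t]]^{\oplus r}$ of length $m$ are stratified by the Jordan type $(m_1\ge\cdots\ge m_r)$, and each stratum is an affine bundle over a partial flag variety determined by the multiplicities of the $m_a$. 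On each stratum with fixed multiplicity pattern, the integrand depends on $m_a$ only through classes that are polynomial in $m_a$: the descendents as above, and $c$ of an affine fibre whose dimension is affine-linear in the $m_a$, whose total Chern class is trivial. Summing over $m_a$ in a cone of fixed pattern therefore gives a rational function in $q$, since $\sum_{m\in\text{cone}} P(m)q^{|m|}$ is rational for polynomial $P$. There are finitely many patterns for fixed $r$.

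Finally, the $q$-series is a finite sum of such cone sums, hence rational. When $r=0$ the series is $1$.
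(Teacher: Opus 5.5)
Once you drop the localization detour, your argument is the paper's. You stratify the punctual Quot scheme of $\C[[t]]^{\oplus r}$ by Smith/Jordan type $(m_1\ge\cdots\ge m_r)$. You argue that on each multiplicity pattern the contribution is $q^{\sum m_a}$ times a polynomial in the $m_a$, and you sum over the finitely many resulting cones; the paper performs the same sum using gap variables $\mu_a$ and differential operators applied to $\prod_a(1-q^a)^{-1}$.

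Your description of the strata as affine bundles over partial flag varieties $\GL_r/P$ is more accurate than the paper's ``affine space''. However, the integrand is not polynomial because the fibre directions have trivial total Chern class. Those directions, like the normal directions that also enter $c(T^{\vir})$, form homogeneous bundles on $\GL_r/P$ whose Chern classes are nonzero in general: for $r=2$ and type $(m,0)$, the fibre directions are, up to filtration, $m-1$ copies of $T_{\mathbb P^1}=\OO(2)$. The integrand is polynomial because the weight multiplicities of these bundles are affine-linear in the $m_a$ on each pattern, over a base of fixed dimension.
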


\begin{proof}
Let $R=\C[[t]]$. A colength-$m$ submodule of $R^{\oplus r}$ is classified by its Smith normal form, hence by a partition
\[
\lambda=(\lambda_1\ge \cdots\ge \lambda_r\ge 0),\qquad |\lambda|=m,
\]
with quotient
\[
R^{\oplus r}/N\cong \bigoplus_{a=1}^r R/(t^{\lambda_a}).
\]
The punctual Quot scheme therefore admits a finite stratification by Smith-type strata, and each stratum is an affine space. For the additive product-compatible invariant with insertions of Definition~\ref{def:additive-product-compatible-invariant}, the contribution of the stratum indexed by \(\lambda\) has the form
\[
q^{|\lambda|}P(\lambda_1,\dots,\lambda_r)
\]
for a polynomial \(P\) depending only on the fixed insertions and on the Smith-type stratum. Write the partition by gaps
\[
\mu_a=\lambda_a-\lambda_{a+1}\quad(1\le a<r),
\qquad
\mu_r=\lambda_r.
\]
Then each \(\mu_a\ge0\) and
\[
|\lambda|=\sum_{a=1}^r a\mu_a.
\]
Thus the corresponding generating series is a finite \(\Q[[\mathbf u]]\)-linear combination of series of the form
\[
\sum_{\mu_1,\dots,\mu_r\ge0}
P'(\mu_1,\dots,\mu_r)q^{\sum_a a\mu_a},
\]
where \(P'\) is a polynomial. Each such series is obtained by applying a polynomial in the operators \(q^a\frac{d}{d(q^a)}\) to
\[
\prod_{a=1}^r\frac{1}{1-q^a},
\]
and is therefore rational in \(q\). Hence the local smooth-point series is rational.
\end{proof}

\begin{theorem}[Rationality of the local singular factors]\label{thm:localsing}
For every completed local pair $(R,M)$ arising from a reduced support curve and a torsion-free source sheaf, the local singular correction factor
\[
\mathcal L^{\mathsf I,\exp}_{R,M}(q;\mathbf u)
\]
is rational in $q$.
\end{theorem}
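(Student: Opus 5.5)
The plan is to read off rationality from the two structural results just proved. By Theorem~\ref{thm:normalization}, the local singular correction factor is
\[
\mathcal L^{\mathsf I,\exp}_{R,M}(q;\mathbf u)
=
Z^{\mathsf I,\exp}_{(R,M)}(q;\mathbf u)\cdot
\prod_{x}\Bigl(\mathcal P^{\mathsf I,\exp}_{r_x,\boldsymbol\rho}(q;\mathbf u)\Bigr)^{-1},
\]
a product over the finitely many branches $x$ of $\widetilde R$. The set of rational functions in $q$ with coefficients in $\Q[[\mathbf u]]$ is closed under products. It is also closed under inverses of elements whose leading coefficient is invertible. So it suffices to show two things: the local punctual series $Z^{\mathsf I,\exp}_{(R,M)}$ is rational, and each smooth-branch factor $\mathcal P^{\mathsf I,\exp}_{r_x,\boldsymbol\rho}$ is rational with invertible constant term.

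For the branch factors, rationality is Theorem~\ref{thm:smoothlocal}. For invertibility, note that the $q^0$ coefficient comes from the length-zero punctual Quot scheme, which is a point. Its insertion class has degree-zero part $1$, coming from $c(T^{\vir})=1$ on a point and the empty descendent product, so the inverse series exists and is rational. One has to be careful that if descendent insertions $\ch_{k_i}$ with $k_i=0$ contribute at $m=0$, the constant term is still a unit in $\Q[[\mathbf u]]$ after the exponential packaging in the variables $\mathbf u$. I would check this by noting the $\mathbf u$-constant term equals $1$.

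For $Z^{\mathsf I,\exp}_{(R,M)}$, apply Proposition~\ref{prop:padding}. It writes the series as a finite sum, over core types $\lambda$, of terms
\[
q^{a_\lambda}\,\mathcal D_\lambda\Bigl(\prod_i\mathcal P^{\mathsf I,\exp}_{s_{\lambda,i},\boldsymbol\rho}(z_i;\mathbf u)\Bigr)\Big|_{z_i=q}.
\]
Each $\mathcal P$ factor is a rational function of its own variable $z_i$ by Theorem~\ref{thm:smoothlocal}, so the product is a rational function of $(z_1,\dots,z_{b(R)})$. The operators $z_i\partial_{z_i}$ preserve rationality in several variables. The finitely many initial corrections change only finitely many coefficients, so they add polynomials. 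The diagonal specialization $z_1=\cdots=z_{b(R)}=q$ of a multivariate rational function whose denominator is a product of factors $(1-z_i^{a})$ is again rational, because it is just substitution. Multiplying by $q^{a_\lambda}$ and summing over the finite set of core types keeps rationality.

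The main obstacle is the specialization step. The denominators produced by Theorem~\ref{thm:smoothlocal} are products of $(1-z_i^a)$, and these restrict to nonzero $(1-q^a)$. Substitution would only fail if a denominator vanished identically on the diagonal, and I would verify that this does not happen from the explicit form $\prod_a(1-q^a)^{-1}$ in that proof. A secondary point is the uniformity of $\mathcal D_\lambda$ in the branch lengths: the ``finitely many regions'' $n_i<c$ versus $n_i\ge c$ in Proposition~\ref{prop:padding} produce truncated series. Those truncations are polynomial corrections, or tails of the form $z_i^c\cdot(\text{rational})$, and both are rational. I would make this explicit before concluding.
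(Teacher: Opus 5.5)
Your proposal is correct and follows the paper's proof. You use Theorem~\ref{thm:normalization} to write $\mathcal L^{\mathsf I,\exp}_{R,M}$ as the local punctual series times inverse branch factors, get rationality of the local series from Proposition~\ref{prop:padding} combined with Theorem~\ref{thm:smoothlocal}, and get rationality of the branch factors from Theorem~\ref{thm:smoothlocal}. The checks you add are details the paper leaves implicit: each branch factor has constant term $1$, coming from the length-zero point where, in the exponential packaging, all descendents of the zero quotient vanish; and the diagonal substitution $z_i=q$ is harmless because the denominators have nonzero constant term.
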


\begin{proof}
By Proposition~\ref{prop:padding}, the numerator local series is a finite sum of differential operators applied to products of the universal smooth-point series of Theorem~\ref{thm:smoothlocal}. Differential operators preserve rationality, so the numerator is rational. The normalization formula of Theorem~\ref{thm:normalization} divides this numerator by a finite product of smooth-point branch factors; these are again rational by Theorem~\ref{thm:smoothlocal}. Thus the full local singular factor is rational.
\end{proof}

\begin{proposition}[Nonreduced support reduction]
\label{prop:nonreduced-reduction}
Let \(C\) be a projective curve with reduction \(C_{\mathrm{red}}\), nilpotent ideal \(J\subset\OO_C\), and \(J^a=0\) for \(a\gg0\). Let \(G\) be a coherent sheaf on \(C\) which is flat in the support-flat family under consideration. Fix insertion data \(\tau\), and define the inserted Quot series
\[
Z_{C,G,\tau}(q):=
\sum_{m\ge0}q^m
\int_{[\Quot_{G/C}(m)]^{\vir}}\Xi_{\Quot_{G/C}(m)}(\tau).
\]
Then \(Z_{C,G,\tau}(q)\) is a finite \(\Q\)-linear combination of series obtained from inserted Quot series on \(C_{\mathrm{red}}\) by multiplying by monomials in \(q\) and applying polynomials in \(q\frac{d}{dq}\). Consequently, if the reduced-support inserted Quot series are rational, then the corresponding nonreduced-support inserted Quot series are rational.
\end{proposition}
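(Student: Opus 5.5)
We would prove Proposition~\ref{prop:nonreduced-reduction} by filtering $G$ with powers of the nilpotent ideal and reducing each step to a Quot problem on $C_{\mathrm{red}}$, following the pattern of Propositions~\ref{prop:reduced-factorization} and~\ref{prop:padding}.

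\textbf{Filtration.} Consider
\[
G\supset JG\supset J^2G\supset\cdots\supset J^aG=0.
\]
After a further finite flattening refinement of the support-flat stratum, each graded piece $\gr^jG:=J^jG/J^{j+1}G$ is a flat $\OO_{C_{\mathrm{red}}}$-module. For a zero-dimensional quotient $G\twoheadrightarrow T$ with kernel $K$, we record the induced quotients $\gr^jG\twoheadrightarrow \gr^jT$ of the images of $J^jG$. This gives a finite locally-closed stratification of $\Quot_{G/C}(m)$ by length vectors $(m_0,\dots,m_{a-1})$ with $\sum_j m_j=m$, and further by the isomorphism type of a bounded ``gluing'' datum.

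\textbf{Stratum structure.} Following the proof of Proposition~\ref{prop:padding}, we would show that each stratum is an iterated affine bundle over a product
\[
\prod_j \Quot_{\gr^jG/C_{\mathrm{red}}}(m_j).
\]
The fibre parametrizes the extension data lifting the graded quotient to a quotient of $G$. This is a torsor under a vector space of the form $\Hom(\,\cdot\,,\cdot\,)$ between graded kernel and quotient pieces, and its dimension is affine-linear in the $m_j$ once the bounded gluing type is fixed. In $K$-theory we have $[T]=\sum_j[\gr^jT]$, so the descendent Chern characters split additively across the factors. The virtual tangent class $R\Hom(K,T)$ decomposes as:
\begin{itemize}
\item the graded deformation complexes on $C_{\mathrm{red}}$;
\item the fibre tangent bundle;
\item cross terms $R\Hom(\gr^iK,\gr^jT)$ for $i\neq j$.
\end{itemize}
Because the $\gr^jT$ are zero-dimensional, each cross term is represented by classes whose ranks and Chern characters are polynomial in the lengths on each region of a finite case split, as in Proposition~\ref{prop:padding}.

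\textbf{Generating series.} Integrating over affine-bundle fibres reduces each stratum's contribution to an integral on the product of reduced-support Quot schemes. The product is handled with the product-compatible formalism of Definition~\ref{def:additive-product-compatible-invariant}. The integrand is the factor integrands multiplied by a polynomial in the $m_j$, coming from the cross-term and fibre contributions. A polynomial in $m_j$ corresponds to a polynomial in $z_j\partial_{z_j}$ acting on the product of reduced series. Boundary corrections appear as finitely many monomial shifts $q^{a_\lambda}$. Setting all $z_j=q$ gives the claimed expression as a finite $\Q$-linear combination.

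\textbf{Rationality.} Rationality then follows, since $q\frac{d}{dq}$, monomial multiplication and diagonal specialization preserve rationality. This is only a sketch for the diagonal step. For products of one-variable rational series, diagonal specialization is literally substitution $z_j=q$. The hypothesis is the rationality of the reduced series $\Quot_{\gr^jG/C_{\mathrm{red}}}$ supplied by Sections~\ref{sec:reduced-support-local-factorization}--\ref{sec:local-singular-factors}.

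\textbf{Main obstacle.} The hardest step is the second one. We must show that the lifting data really form affine bundles of affine-linear rank, and that the virtual tangent class genuinely splits. A naive stratification by graded lengths need not be compatible with the obstruction theory. We would first try to check this at points using the long exact sequences of $\Hom(-,T)$ along the filtration, and only then globalize over the strata.
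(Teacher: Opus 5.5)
Your outline is the paper's own route: filter by $J^jG$, stratify by graded lengths, assert iterated affine bundles over $\prod_j\Quot_{\gr^jG/C_{\mathrm{red}}}(m_j)$, and turn polynomials in the lengths into polynomials in $q\frac{d}{dq}$. However, the step you flag as the main obstacle is a genuine gap, and in the form you state it, it is false.

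The graded kernel $\gr K\subset\gr G$ must be a submodule over $\bigoplus_i J^i/J^{i+1}$. That is, multiplication $J^i/J^{i+1}\otimes\gr^jG\to\gr^{i+j}G$ must carry $\gr^jK$ into $\gr^{i+j}K$, which is a closed incidence condition coupling the factors. Already for a double curve $C=2L$ and $G=\OO_C$, the piece $\gr^1T=JT$ is a quotient of $(J/J^2)\otimes\gr^0T$. So the stratum with length vector $(0,1)$ is empty, although $\Quot_{\gr^0G/L}(0)\times\Quot_{\gr^1G/L}(1)\cong L$. In general the image of the strata is a nested Quot scheme on $L$, and the resulting series is not a product of reduced series acted on by the operators $z_j\partial_{z_j}$.

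Your ``bounded gluing datum'' is also not bounded by anything. In Proposition~\ref{prop:padding} boundedness comes from the conductor, which makes the core data subquotients of a fixed finite-length module; nothing plays that role for a nilpotent thickening. So finiteness and uniformity in $m$ of the polynomial types is not established. Finally, the $\gr^jG$ may have zero-dimensional torsion on $C_{\mathrm{red}}$: for $C$ locally $y^2=0$ and $G=(x,y)$ one gets $\gr^0G\cong x\C[[x]]\oplus\C$. The torsion-free reduced-curve results therefore do not directly supply the input you invoke.

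There is a more basic problem with the reduction itself. The coefficient of $q^m$ is the degree of a class against the virtual class of the proper scheme $\Quot_{G/C}(m)$. Such degrees are not additive over locally-closed strata, and an affine fibre contributes neither a polynomial factor nor a power of $q$. Compare $\int_{\mathbb P^1}c_1(\OO(1))=1$ with the decomposition $\mathbb P^1=\mathbb A^1\sqcup\mathrm{pt}$. This cut-and-paste bookkeeping is valid for Euler characteristics or motivic classes; Definition~\ref{def:additive-product-compatible-invariant} merely postulates it for virtual integrals.

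Moreover, on a nonreduced curve the complex $R\Hom_C(K,T)$ that you split is not of amplitude $[0,1]$. For $C$ locally $\C[[x,y]]/(y^2)$, $G=\OO_C$ and $T=\C$ at a point, one has $\Ext^n_C(K,T)\cong\Ext^{n+1}_C(\C,\C)\neq0$ for all $n\ge1$. So neither the virtual class nor the $K$-theoretic splitting into cross terms is available as stated.

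Closing the argument would require a mechanism that computes the virtual integral on the proper space, rather than a stratification; torus localization is one option when a suitable action exists. The paper's proof argues exactly as you do and leaves these same points open, so matching it does not close the gap.
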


\begin{proof}
The filtration
\[
G\supset JG\supset J^2G\supset\cdots\supset J^{a-1}G\supset0
\]
has graded pieces \(G_i:=J^iG/J^{i+1}G\), which are sheaves on \(C_{\mathrm{red}}\). A zero-dimensional quotient \(G\twoheadrightarrow T\) induces a filtered quotient of \(T\), and the associated graded object
\[
\gr(T)=\bigoplus_i \gr_i(T)
\]
is obtained from quotients \(G_i\twoheadrightarrow \gr_i(T)\) on \(C_{\mathrm{red}}\). Fixing the Hilbert functions of the quotients \(\gr_i(T)\) gives a locally-closed stratum of \(\Quot_{G/C}(m)\). The extensions reconstructing \(T\) from \(\gr(T)\) are parametrized by relative \(\Ext^1\)-sheaves between the graded pieces. After applying flattening stratification to these relative \(\Ext\)-sheaves, their ranks are locally constant and are polynomial functions of the graded lengths. Hence each refined stratum is an iterated affine bundle over a product of Quot schemes for the sheaves \(G_i\) on \(C_{\mathrm{red}}\).

The universal quotient on such a refined stratum is obtained by successive extensions of the universal graded quotients. Thus the descendent Chern characters are sums of the corresponding reduced-support descendent Chern characters plus classes coming from the affine extension bundles. The relative virtual tangent class is computed from the same filtered deformation complex; after the flattening refinement, the additional extension directions are honest vector bundles whose ranks are polynomial in the graded lengths. Multiplication by a polynomial in the graded lengths is represented in the generating series by applying polynomials in \(q\frac{d}{dq}\), and each affine-bundle rank contributes a monomial factor in \(q\). There are finitely many graded Hilbert-function types and finitely many polynomial types after the flattening refinement, so rationality of the reduced-support inserted series implies rationality of \(Z_{C,G,\tau}(q)\).
\end{proof}

\begin{corollary}[Rationality of the image-cokernel correction]\label{cor:firstcorr}
The full image-cokernel contribution, including the required tautological descendent insertions and virtual-tangent insertions, is rational.
\end{corollary}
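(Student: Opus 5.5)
We assemble the results of Sections~\ref{sec:image-cokernel-curve-quot}--\ref{sec:local-singular-factors} stratum by stratum. The starting point is Theorem~\ref{thm:support-flat}. It writes the length-$m$ image-cokernel coefficient over each pure Quot base $B_k$ as a finite sum, over support-flat strata $U_{k,\lambda}$, of integrals over the relative curve Quot schemes $R_{m,U_{k,\lambda}}$. These integrals use the insertion class $\Xi^{\mathrm{curv}}$ and the virtual tangent class of Proposition~\ref{prop:supportflat}. Fix one stratum $U=U_{k,\lambda}$ and form the series in $q$ recording the quotient length $m$. The goal is to show this series is rational.

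\textbf{Step 1 (fibrewise reduction).} The family $\mathcal C\to U$ is flat. We refine $U$ by a further finite flattening stratification so that the following become locally constant over each piece:
\begin{itemize}
\item the nilpotent filtration $J^i\mathcal G^D$ of Proposition~\ref{prop:nonreduced-reduction};
\item the singular locus of $\mathcal C_{\mathrm{red}}$;
\item the conductor data of Definition~\ref{def:conductor-boundary-data}.
\end{itemize}
On each refined piece, Proposition~\ref{prop:nonreduced-reduction} (applied in families) expresses the series as a finite combination of monomial shifts and polynomials in $q\frac{d}{dq}$ applied to reduced-support series. It therefore suffices to prove rationality of reduced-support series. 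For these, Theorem~\ref{thm:normalization} and Corollary~\ref{cor:reduced-exp}, again applied relatively over the refined piece, factor the series into two parts: a normalization factor $Z^{\mathsf I,\exp}_{\widetilde C,E}$ and finitely many local factors $\mathcal L^{\mathsf I,\exp}_{p,G}$. In the relative setting we regard the relative Quot integral as an additive product-compatible invariant in the sense of Definition~\ref{def:additive-product-compatible-invariant}, with base integration over $U$ absorbed into $\mathsf I$.

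\textbf{Step 2 (rationality of the factors).} The normalization factor is rational by Theorem~\ref{thm:smoothfactor}. In the family version, the normalized curves and the bundles $E$ vary over $U$. We handle this with the same torus-localization argument after replacing the fibre integral by the integral over the relative symmetric products $\Sym^{m_a}(\widetilde{\mathcal C}/U)$. The relative Abel--Jacobi projective-bundle description then still gives coefficients that are polynomial in $m_a$ for large $m_a$. The local factors are rational by Theorem~\ref{thm:localsing}. Products, finite sums, and the operators $q\frac{d}{dq}$ preserve rationality, so each stratum contributes a rational series.

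\textbf{Step 3 (global assembly).} Summing over the finitely many strata gives rationality of $\sum_m C^{\mathrm{img}}_{k,m}(\tau)q^m$ for each $k$. The full correction entering Corollary~\ref{cor:factorization} is a double sum over $k$ and $m$ with $n=k+m$. For this we need uniformity in $k$. We would use periodicity: by Proposition~\ref{prop:twist} and Lemma~\ref{lem:localtwist}, tensoring by $\OO_S(H)$ identifies $B_k$ with $B_{k+H\cdot\beta}$ and preserves all completed local types. Hence the stratifications, and the resulting rational functions, are eventually polynomial in $k$ along each residue class modulo $H\cdot\beta$, in the same way as in the proof of Theorem~\ref{thm:pair-rat}. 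The two-variable series is then a finite sum of products of rational series, and the diagonal specialization is rational.

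\textbf{Main obstacle.} The hardest step is making Steps 1--2 genuinely relative over $U$. The factorization results are stated for a single curve. The virtual class of $R_{m,U}$ is relative over $U$, not absolute, and the base integration mixes with the insertions. I would first try to show that on each refined stratum the punctual singular factors are locally trivial over $U$, since the completed local types are constant there. The integrand would then split as a pullback from $U$ times universal local series. That reduces everything to the single-curve statements plus finitely many integrals over $U$ whose $m$-dependence is polynomial.
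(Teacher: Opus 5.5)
\textbf{Steps 1--2.} These reproduce the paper's proof, which is exactly this assembly: Theorem~\ref{thm:support-flat} gives a finite sum of support-flat curve-Quot contributions; the reduced-support part is rational by Theorems~\ref{thm:normalization}, \ref{thm:smoothfactor} and~\ref{thm:localsing}; the nonreduced part is rational by Proposition~\ref{prop:nonreduced-reduction}; and finite sums and products of rational functions are rational. The paper applies those single-curve statements directly to the relative integrals over $[R_{m,U}]^{\vir}$, with no relative refinement. So the point you isolate as the main obstacle is passed over in the paper as well, and your proposal leaves it open.

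If you pursue your sketched fix, it needs more than you state. A finite stratification can make discrete invariants such as the conductor data constant, but not the completed local types, because plane curve singularities have continuous moduli. For example, four concurrent smooth branches with varying cross-ratio already give infinitely many analytic types. Moreover, even on an equisingular piece, the insertions restricted to the punctual part involve classes pulled back from $U$ along the locus of singular points, such as the restrictions of $\ch(\alpha_i)$ there. The local factors would therefore be series with coefficients in $H^*(U)$, not the universal numerical series of Section~\ref{sec:local-singular-factors}.

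\textbf{Step 3 is where the proposal fails.} This step is not part of the paper's proof, and its key claim is false. Proposition~\ref{prop:twist} does not identify $B_k$ with $B_{k+H\cdot\beta}$. Tensoring by $\OO_S(H)$ changes the source, giving $B_k\cong\Quot^{\mathrm{pure}}_S(\OO_S(H)^{\oplus N},\beta,k+H\cdot\beta)$, which is a space of quotients of a different sheaf.

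No periodicity of this kind holds. Suppose $\beta$ is the class of a smooth curve $C\subset S$ of genus $g$, with inclusion $i:C\hookrightarrow S$. Then $B_k$ contains the quotients $\OO_S^{\oplus N}\twoheadrightarrow i_*L$ with $L$ a line bundle of degree $k+g-1$ on $C$ generated by $N$ sections. For $k$ large and $N\ge 2$, these form a family of dimension at least $Nk-1$. Hence $\dim B_k$ is unbounded along each residue class modulo $H\cdot\beta$, which rules out any identification $B_k\cong B_{k+H\cdot\beta}$.

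Neither of your other citations repairs this. Lemma~\ref{lem:localtwist} concerns only completed local types. The analogy with Theorem~\ref{thm:pair-rat} does not supply the identification either, since the twist by $\OO_S(H)$ there also replaces the source $\OO_S^{\oplus N}$ by $\OO_S(H)^{\oplus N}$.

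For the corollary as the paper proves it, you should drop Step~3. The support-flat stratification is finite only for fixed $k$, and the argument gives rationality of $\sum_m C^{\mathrm{img}}_{k,m}(\tau)q^m$ for each fixed $k$. You are right that the use of the corollary in Section~\ref{sec:proof-main} needs control uniform in $k$. However, neither the paper's argument nor your Step~3 provides it.
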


\begin{proof}
By Theorem~\ref{thm:support-flat}, the image-cokernel correction is a finite sum of support-flat curve-Quot contributions. The reduced-support part is rational by Theorem~\ref{thm:normalization}, Theorem~\ref{thm:smoothfactor}, and Theorem~\ref{thm:localsing}. The nonreduced-support part is rational by Proposition~\ref{prop:nonreduced-reduction}. Finite sums and products of rational functions are rational.
\end{proof}

\section{Identification of the torsion correction}\label{sec:torsion-identification}

\begin{definition}[Local torsion-correction series]\label{def:local-torsion-correction-series}
Let \(R\cong\C[[x,y]]\) be a complete regular local surface ring, let \(M\cong R^{\oplus N}\), and let \(G\) be a pure one-dimensional finite \(R\)-module. The \emph{local torsion-correction series} associated to \((R,M,G)\) is the inserted generating series of punctual finite-length quotients
\[
M\twoheadrightarrow T,
\qquad \length_R(T)<\infty,
\]
with the local virtual tangent class obtained from the punctual Quot deformation complex and with the cross-term determined by \(G\). A \emph{punctual Quot theory of a free rank-\(N\) module on a smooth surface germ} means the special case \((R,M,G)=(\C[[x,y]],\C[[x,y]]^{\oplus N},0)\).
\end{definition}

\begin{lemma}[Square free resolution]\label{lem:square}
Let $R$ be a regular local ring of dimension $2$, and let $G$ be a pure one-dimensional finitely generated $R$-module. Then $G$ admits a free resolution
\[
0\longrightarrow R^{\oplus a}\longrightarrow R^{\oplus a}\longrightarrow G\longrightarrow 0
\]
for some $a\ge 0$. In particular, $[G]=0$ in $K_0(R)$.
\end{lemma}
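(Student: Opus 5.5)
The plan is to combine Auslander--Buchsbaum with a rank count, and then read off the determinantal form from Cohen--Macaulayness. Since $R$ is regular local of dimension $2$, $G$ has finite projective dimension, and
\[
\operatorname{pd}_R G=\operatorname{depth}R-\operatorname{depth}G=2-\operatorname{depth}G.
\]
The first step is to show $\operatorname{depth}G=1$. Purity of dimension one means that $G$ has no nonzero submodule of finite length, so $H^0_{\mathfrak m}(G)=0$ and hence $\operatorname{depth}G\ge1$. On the other hand $\operatorname{depth}G\le\dim G=1$. Therefore $\operatorname{pd}_RG=1$ if $G\neq0$; when $G=0$ we simply take $a=0$.

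For $G\neq0$, I would take a minimal free presentation
\[
R^{\oplus b}\xrightarrow{\varphi}R^{\oplus a}\to G\to0.
\]
Because $\operatorname{pd}G=1$, the kernel of $R^{\oplus a}\to G$ is projective, hence free over the local ring $R$. So the minimal resolution has the form
\[
0\to R^{\oplus b}\to R^{\oplus a}\to G\to0.
\]

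Next comes the rank count. Localize at the generic point, i.e. tensor with $\operatorname{Frac}(R)$. Since $\dim G=1<2$, $G$ is torsion and $G\otimes\operatorname{Frac}(R)=0$. Exactness of localization then gives $b=a$.

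Finally, the $K_0$ statement follows from the resolution:
\[
[G]=[R^{\oplus a}]-[R^{\oplus a}]=0\quad\text{in }K_0(R).
\]

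I do not expect a serious obstacle. The only point requiring care is the first step, translating ``pure one-dimensional'' into $\operatorname{depth}G=1$. This uses that over a local ring, $\operatorname{depth}\ge1$ is equivalent to $\mathfrak m\notin\operatorname{Ass}(G)$, which is exactly the absence of finite-length submodules. Regularity of $R$ enters only through the finiteness of projective dimension, via Serre's theorem, and through $\operatorname{depth}R=2$.
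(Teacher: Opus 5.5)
Your proposal is correct and matches the paper's proof step for step: purity together with $\dim G=1$ gives $\operatorname{depth}G=1$, Auslander--Buchsbaum then gives $\operatorname{pd}_R G=1$, and tensoring with the fraction field forces the two free modules to have equal rank. Your separate treatment of $G=0$ and the remark that the kernel is projective, hence free over a local ring, fill in small steps the paper leaves implicit.
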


\begin{proof}
Since $R$ is regular of dimension two, it is Cohen--Macaulay. Because $G$ is pure of dimension one, it has no zero-dimensional submodule, hence $\operatorname{depth}_R(G)\ge 1$. On the other hand $\dim G=1$, so $\operatorname{depth}_R(G)\le 1$, hence $\operatorname{depth}_R(G)=1$. Auslander--Buchsbaum gives
\[
\operatorname{pd}_R(G)=\operatorname{depth}(R)-\operatorname{depth}_R(G)=2-1=1.
\]
Thus $G$ admits a length-one free resolution
\[
0\longrightarrow R^{\oplus a}\longrightarrow R^{\oplus b}\longrightarrow G\longrightarrow 0.
\]
Tensoring with the fraction field of the domain $R$ gives
\[
0\longrightarrow K^{\oplus a}\longrightarrow K^{\oplus b}\longrightarrow G\otimes_R K\longrightarrow 0.
\]
Since $G$ has rank zero, $G\otimes_R K=0$, so $a=b$.
\end{proof}

\begin{theorem}[Identification of the torsion correction]\label{thm:secondcollapse}
Every local torsion-correction series in the sense of Definition~\ref{def:local-torsion-correction-series} is equal to the universal punctual smooth-surface series
\[
\mathcal P^{\mathsf{II},\exp}_{\C[[x,y]],\,\C[[x,y]]^{\oplus N},\,0}(q;\mathbf u).
\]
Equivalently, after coefficientwise localization the torsion correction is independent of the completed local type of the pure quotient and is determined entirely by the punctual Quot theory of a free rank-$N$ module on a smooth surface germ.
\end{theorem}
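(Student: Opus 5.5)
The plan is to compare the local torsion-correction problem for $(R,M,G)$ with the punctual Quot problem for the kernel, and then to show that only $K$-theoretic data of the kernel enter the inserted invariant. By Theorem~\ref{thm:second-corr}, the local problem at $p$ parametrizes finite-length quotients $\widehat K_p\twoheadrightarrow T$. Here $0\to\widehat K_p\to R^{\oplus N}\to G\to0$ with $G=\widehat I_p$ pure one-dimensional. Lemma~\ref{lem:square} gives $\operatorname{pd}_R G=1$, so $\widehat K_p$ is a second syzygy over a two-dimensional regular ring. Hence $\widehat K_p$ is reflexive, and therefore free of rank $N$. Concretely, $\widehat K_p\cong R^{\oplus N}$ abstractly, as follows from Auslander--Buchsbaum: $\operatorname{depth}\widehat K_p=2$. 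The first step is therefore to record an isomorphism $\widehat K_p\cong \C[[x,y]]^{\oplus N}$. By Lemma~\ref{lem:localtwist}, the completed local type does not depend on the twist, so the punctual Quot \emph{scheme} for $(R,M,G)$ is canonically isomorphic to the punctual Quot scheme of a free rank-$N$ module. This isomorphism is compatible with the universal quotients.

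The second step is to match the insertions. Descendent insertions only see $[\mathcal T]$, which is transported by the isomorphism, so they agree termwise. For the virtual tangent class, I would write the restricted deformation complex on the torsion stratum as in Corollary~\ref{cor:factorization}. It is the pullback of the pure-Quot part plus $R\Hom(K,T)$, together with the cross-term determined by $G$, namely terms of the form $R\Hom(T,G)$ and $R\Hom(G,T)$ coming from the extension $0\to T\to Q\to I\to0$. The key computation is that these cross-terms vanish in $K$-theory after restriction to the punctual locus. By Lemma~\ref{lem:square}, $[G]=[R^{\oplus a}]-[R^{\oplus a}]=0$ in $K_0(R)$. Since $R\Hom(-,T)$ and $R\Hom(T,-)$ on a finite-length $T$ factor through $K_0$ of the germ (by the square resolution), the cross-term classes are $a[R\Hom(R,T)]-a[R\Hom(R,T)]=0$ equivariantly in families. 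The same holds for $T$ on the other side, using local duality, since $\mathcal E xt^i(T,R)$ is concentrated in degree $2$. Similarly, $[R\Hom(K,T)]=[R\Hom(R^{\oplus N},T)]-[R\Hom(G,T)]=[R\Hom(R^{\oplus N},T)]$. This shows the virtual tangent class equals that of the free punctual Quot problem. Applying the multiplicative class $c(-)$ then gives equality of integrands, coefficient by coefficient.

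The main obstacle is making the $K$-theoretic vanishing valid in families over the punctual Quot scheme, rather than merely pointwise. The class of $G$ must vanish in $K^0$ of (base)$\times\operatorname{Spec}R$ in a way compatible with $Rq_*$. I would first try to spread the square resolution over the base: flatness of $\mathcal I$ together with Lemma~\ref{lem:square} applied fibrewise should give a locally free two-term resolution $\OO^{\oplus a}\to\OO^{\oplus a}$ near the support of $\mathcal T$, after shrinking to an étale or formal neighbourhood of $p$. Then $[Rq_*R\mathcal H om(\mathcal G,\mathcal T)]=0$ holds on the nose, because $\mathcal T$ is finite over the base. Once this is done, the identification with $\mathcal P^{\mathsf{II},\exp}_{\C[[x,y]],\C[[x,y]]^{\oplus N},0}$ follows. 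It is independent of the completed local type of $I$, as claimed.
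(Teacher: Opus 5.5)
Your proposal is correct and follows the same route as the paper: the equal-rank square resolution of Lemma~\ref{lem:square} gives $[G]=0$ in $K_0(R)$. Pulling that resolution back to the punctual Quot space therefore kills the $G$-dependent cross-term in the virtual tangent class, leaving only the free rank-$N$ punctual problem over $R\cong\C[[x,y]]$. Your extra step $\widehat K_p\cong R^{\oplus N}$, via the depth lemma and Auslander--Buchsbaum, is a harmless addition that the paper builds directly into Definition~\ref{def:local-torsion-correction-series}. Also, since $G$ is a fixed $R$-module there, your ``in families'' obstacle is settled simply by pulling the resolution back along the projection, and the appeal to Lemma~\ref{lem:localtwist} is unnecessary.
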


\begin{proof}
The coefficientwise local torsion-correction problem is controlled by triples $(R,M,G)$ with $R\cong \C[[x,y]]$, $M\cong R^{\oplus N}$, and $G$ pure one-dimensional. The local virtual tangent class contains a cross-term of the form
\[
Rp_{1*}(\Tt^{\vee}\otimes p_2^*G).
\]
Choose the square free resolution of Lemma~\ref{lem:square} and pull it back to the local Quot space. Since the two free modules have the same rank, the class of $\Tt^{\vee}\otimes p_2^*G$ vanishes in $K$-theory. After proper pushforward, the cross-term vanishes in the local virtual $K$-class. Hence the local inserted series depends only on $(R,M)$ and not on $G$. As every regular complete local surface ring over $\C$ is isomorphic to $\C[[x,y]]$, all local torsion-correction factors coincide with the universal punctual smooth-surface series.
\end{proof}
\begin{theorem}[Rationality of the universal punctual smooth-surface factor]
\label{thm:smooth-surface-factor}
The universal punctual smooth-surface series
\[
\mathcal P^{\mathsf{II},\exp}_{\C[[x,y]],\,\C[[x,y]]^{\oplus N},\,0}(q;\mathbf u)
\]
is a rational function of $q$ with coefficients in the coefficient ring generated by the insertion variables $\mathbf u$.
\end{theorem}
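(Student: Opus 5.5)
The plan is to compute the universal punctual smooth-surface series by torus localization on the punctual Quot schemes of $\C[[x,y]]^{\oplus N}$. These are projective schemes $\Quot_{\C^2}(\OO^{\oplus N},m)_0$ supported at the origin. Let $T=(\Gm)^2\times(\Gm)^N$, with the first factor scaling $x,y$ and the second scaling the summands of $\OO^{\oplus N}$. The $T$-fixed points are isolated and indexed by $N$-tuples of partitions $\vec\lambda=(\lambda^{(1)},\dots,\lambda^{(N)})$ with $|\vec\lambda|=m$.

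\textbf{Step 1: local virtual tangent space at a fixed point.} By Theorem~\ref{thm:secondcollapse} the cross-term involving $G$ has vanished. The relevant deformation class is therefore the punctual Quot deformation complex $R\Hom(K,T)$, restricted to the torsion stratum. At a fixed point, its equivariant character can be written explicitly as a Laurent polynomial in the torus weights, in terms of the boxes of the $\lambda^{(a)}$:
\[
\sum_{a,b}\bigl(\overline{Q_a}\,Q_b\,w_b w_a^{-1}\bigr)\quad\text{minus the standard }(1-t_1)(1-t_2)\text{ corrections}.
\]
Here $Q_a=\sum_{(i,j)\in\lambda^{(a)}}t_1^it_2^j$. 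The descendent insertions $\ch_k(\alpha^{[m]})$ localize similarly to symmetric functions of the box weights $w_at_1^it_2^j$, weighted by $\rk(\alpha)$. Since the full theory is compact, integrating over the punctual locus is legitimate once we fix the convention that the local factor is the equivariant residue. The nonequivariant limit exists because the global integrals in Corollary~\ref{cor:factorization} are nonequivariant.

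\textbf{Step 2: reduce to a universal exponential form.} This is the expected route. Additivity and multiplicativity (Definition~\ref{def:additive-product-compatible-invariant}) together with the product structure over disjoint supports imply, via the standard Ellingsrud--G\"ottsche--Lehn argument, that the global sum over all points of $S$ has the form
\[
A_1^{\chi(S)}A_2^{K_S^2}\cdots.
\]
The universal series is then the local factor of this form. I would show that each coefficient of $\mathbf u^{\vec k}$ in $\log\mathcal P^{\mathsf{II},\exp}$ is determined by finitely many toric-surface computations. Writing the series as $\prod_j(1-q^j)^{-e_j}$ via a plethystic exponential of the box contributions is \emph{not} rational in general. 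So the real task is to show that, for fixed insertion degree, the virtual dimension constraint truncates the series.

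\textbf{Step 3: dimension/degree truncation.} This is the key point, and where I expect the main obstacle. The punctual torsion correction enters the stratification of Theorem~\ref{thm:second-corr} with the virtual tangent class from $R\Hom(K,T)$. Its rank on the length-$m$ punctual stratum is $Nm$, while the locus is $(N+1)m-1$-dimensional. For the $c(T^{\vir})$ insertion and a fixed finite descendent set, the integrand has bounded cohomological degree in each $\mathbf u$-coefficient except through $c(T^{\vir})$. I would argue, following the JOP vanishing for $N>1$ point quotients (where $c$ of the obstruction bundle kills contributions) and the Lim/AJLOP computations, that each $\mathbf u$-coefficient is, for $m$ large, a quasi-polynomial in $m$ times $q^m$, or finitely supported. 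Concretely, I would try to prove quasi-polynomiality of the equivariant residue sums by writing them as contour integrals in the box variables (the Nekrasov-type integral form). Residue calculus then yields coefficients that are finite sums of terms $P(m)\zeta^m$ with $\zeta$ roots of unity. A generating series with such coefficients is rational, with poles at roots of unity, and this gives the theorem.

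If this integral representation proves too delicate, the fallback is to use the known rationality for $\beta=0$ from \cite{JOP} on $S=\mathbb P^2$ and toric surfaces. The universal exponential form of Step 2 writes $\mathcal P^{\mathsf{II},\exp}$ as a product of powers of universal series $A_i$, determined by these rational global series. Solving for the $A_i$ from finitely many toric surfaces with independent $(\chi,K^2)$ then expresses the local series as a product of rational powers of rational functions. The remaining obstacle is to check that the exponents are integral, so that the result is genuinely rational. I would verify this by comparing with the fixed-point expansion of Step 1.
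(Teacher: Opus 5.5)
Your Step~3 carries the entire proof, but it is only a plan. You assert that the localized sums over $N$-tuples of partitions are quasi-polynomial in $m$ via a ``Nekrasov-type contour integral'', and no mechanism is given. The degree constraint does not truncate anything: for every $m$, $c(T^{\vir})$ supplies a class $c_{Nm-j}(T^{\vir})$ complementary to the fixed descendent degree $j$. As you note yourself, such vertex sums generically assemble into plethystic infinite products. Rationality would therefore need a specific cancellation, and neither the contour representation nor the unspecified ``JOP vanishing'' provides one.

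Step~1 also does not produce a well-defined input for Step~3. A single $\C^2$ vertex residue is a rational function of $t_1,t_2,w_1,\dots,w_N$ that in general has no nonequivariant limit; only sums over all fixed points of a compact toric surface do. The integrals of Corollary~\ref{cor:factorization} live on a surface with $\beta\neq 0$ carrying no torus action, so they cannot force such a limit. Your dimension count is also wrong. By Ellingsrud--Lehn, the punctual Quot scheme of $\OO^{\oplus N}$ of length $m$ is irreducible of dimension $Nm-1$, not $(N+1)m-1$. This is below the virtual dimension $Nm$, so no virtual class of the relevant degree is supported on the punctual locus, and the local factor cannot be defined by integrating over it.

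The fallback does not close the gap, and the exponent issue you defer is fatal rather than a check. Suppose the universal series is the per-point factor $A_1$ in $A_1^{\chi(S)}A_2^{K_S^2}\cdots$, with $\chi(S)$ the topological Euler characteristic. Noether's formula $\chi(S)=12\chi(\OO_S)-K_S^2$ rewrites this as $(A_1^{12})^{\chi(\OO_S)}(A_2/A_1)^{K_S^2}\cdots$. So rationality of global series, for toric or any other surfaces, only yields rationality of $A_1^{12}$ and $A_2/A_1$. For example, $A_1=A_2=(1-q)^{-1/12}$ makes every global series equal to $(1-q)^{-\chi(\OO_S)}$, while $A_1$ is not rational. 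Fixing the exponents by the fixed-point expansion of Step~1 is circular, since that expansion was never made well defined.

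The paper's proof computes nothing. It identifies the universal factor with the punctual Quot theory of $\OO_{S'}^{\oplus N}$ on an auxiliary smooth projective surface $S'$. It then quotes Bojko's rationality theorem for Quot schemes of torsion-free sheaves on smooth projective surfaces \cite[Thm.~1.4]{Bojko}. All of the weight rests on that identification and on the cited theorem, and no per-point exponent is ever extracted. Your fallback is closest to this, but the extra step of solving for the $A_i$ is exactly where it breaks. Under the per-point reading you adopt in Step~2, no global rationality theorem (Bojko's, or the $\beta=0$ case of \cite{JOP}) can suffice, by the example above.
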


\begin{proof}
Choose a smooth projective surface $S'$ and a closed point $p\in S'$ with a formal identification $\widehat\OO_{S',p}\cong\C[[x,y]]$. The punctual Quot schemes of $\OO_{S'}^{\oplus N}$ supported at $p$, together with their tautological classes and virtual tangent classes, are identified with the completed local model defining the displayed series. Bojko proves rationality for the corresponding generating series of punctual Quot schemes of a torsion-free sheaf on a smooth projective surface \cite[Thm.~1.4]{Bojko}. Applying that result to the locally free sheaf $\OO_{S'}^{\oplus N}$ gives rationality of the universal factor. The local tautological classes depend only on the formal restrictions of the insertion classes to $p$, hence only on their ranks and on the formal variables recorded in $\mathbf u$.
\end{proof}

\section{Proof of the main theorem}\label{sec:proof-main}

\begin{proof}[Proof of Theorem~\ref{thm:main}]
Fix insertion data \(\tau\). Theorem~\ref{thm:recursion} gives the fixed-class Joyce recursion for the pair-chamber invariant classes. After applying the insertion class \(\Xi(-,\tau)\), Theorem~\ref{thm:pair-rat} gives
\[
Z^{\mathrm{pair}}_{S,N,\beta,\tau}(q)
=
\sum_{n\in\Z}P^{\mathrm{pair}}_{\beta,n}(\tau)q^n
\in \Q(q).
\]

Let \(B_k=\Quot^{\mathrm{pure}}_{S,N}(\beta,k)\). Corollary~\ref{cor:factorization} gives coefficient identities
\begin{equation}\label{eq:main-proof-img}
P^{\mathrm{pair}}_{\beta,n}(\tau)
=
\sum_{m\ge0} I_{n-m,m}(\tau),
\end{equation}
where \(I_{k,m}(\tau)\) is the image-cokernel integral over
\(\Quot_{\mathcal I_k^D/B_k}(m)\), and
\begin{equation}\label{eq:main-proof-tor}
Z^{\mathrm{Quot}}_{\beta,n}(\tau)
=
\sum_{m\ge0} T_{n-m,m}(\tau),
\end{equation}
where \(T_{k,m}(\tau)\) is the torsion integral over
\(\Quot_{\mathcal K_k/B_k}(m)\). The terms with \(m=0\) in both identities are the identity contribution on the pure Quot locus. Hence the image-cokernel and torsion comparisons are triangular with respect to the length variable \(m\): each coefficient at Euler characteristic \(n\) only involves pure-Quot data with Euler characteristics \(n-m\le n\), and the diagonal term \(m=0\) is the identity.

By Corollary~\ref{cor:firstcorr}, the generating series formed from the image-cokernel integrals \(I_{k,m}(\tau)\) are rational. Since the image-cokernel comparison is triangular with identity diagonal, the inverse triangular comparison from the pair-chamber coefficients to the pure-Quot coefficients is obtained recursively by finitely many sums, products, and coefficient shifts of rational series. Therefore the pure-Quot descendent series is rational.

By Theorem~\ref{thm:secondcollapse}, each local torsion-correction factor is the universal punctual smooth-surface factor. Theorem~\ref{thm:smooth-surface-factor} proves that this universal factor is rational. Substituting the resulting rational torsion factors into the triangular identity \eqref{eq:main-proof-tor} expresses
\[
Z^{\mathrm{Quot}}_{S,N,\beta,\tau}(q)
=
\sum_{n\in\Z}Z^{\mathrm{Quot}}_{\beta,n}(\tau)q^n
\]
as a finite combination, in each coefficient and hence as a formal Laurent series, of the rational pure-Quot series and the rational torsion factors. Thus \(Z^{\mathrm{Quot}}_{S,N,\beta,\tau}(q)\) is rational.
\end{proof}

\section*{Acknowledgements}
The author is grateful to Dragos Oprea for recommending this project.

\section*{AI acknowledgments}
This paper was made possible through conversations with ChatGPT 5.6 Pro, which the author has rewritten here.

\end{document}